\newtheorem{thm}{Theorem}[section]
\theoremstyle{plain}
\newtheorem{cor}[thm]{Corollary}
\newtheorem{definition}[thm]{Definition}
\newtheorem{remk}[thm]{Remark}
\newtheorem{exa}[thm]{Example}
\numberwithin{equation}{section}
\begin{document}

\title {Square-difference factor absorbing ideals of a commutative ring}

\author{David F. Anderson, Ayman Badawi, and Jim Coykendall}

\date{\today}
\subjclass{Primary 13A05, 13F05} \keywords{Prime ideal,
$2$-absorbing ideal, weakly $2$-absorbing ideal, weakly prime ideal,
almost prime ideal, $\phi$-prime ideal}

\subjclass[2010]{Primary 13A15; Secondary 13F05, 13G05.}

\keywords{Square-difference factor absorbing ideal, weakly square-difference factor absorbing ideal, prime ideal, weakly prime ideal, radical ideal, $2$-absorbing ideal, $n$-absorbing ideal}

\begin{abstract}
Let $R$ be a commutative ring with $1 \neq 0$. A proper ideal $I$ of $R$ is a {\it square-difference factor absorbing ideal} (sdf-absorbing ideal) of $R$ if whenever  $a^2 - b^2 \in I$ for $0 \neq a, b \in R$, then $a + b \in I$ or  $a - b \in I$. In this paper, we introduce and investigate sdf-absorbing ideals.
\end{abstract}

 \maketitle

\section{Introduction} \label{s1}
In this paper, we introduce and study square-difference factor absorbing  ideals of a commutative ring $R$ with nonzero identity, where a proper ideal $I$ of $R$ is a {\it square-difference factor absorbing ideal} (sdf-absorbing ideal) of $R$ if whenever  $a^2 - b^2 \in I$ for $0 \neq a, b \in R$, then $a + b \in I$ or  $a - b \in I$. A prime ideal is an sdf-absorbing ideal (but not conversely). In particular, $\{0\}$ is an sdf-absorbing ideal in any integral domain. For other generalizations of prime ideals, see \cite{AB, AB2, B}. We also introduce and briefly study weakly square-difference factor absorbing ideals, where a proper ideal $I$ of $R$ is a {\it weakly square-difference factor absorbing ideal} (weakly sdf-absorbing ideal) of $R$ if whenever  $0 \neq a^2 - b^2 \in I$ for $0 \neq a, b \in R$, then $a + b \in I$ or  $a - b \in I$

In Section~\ref{s2}, we give some basic properties of sdf-absorbing ideals. We show that a nonzero sdf-absorbing ideal is a radical ideal (Theorem~\ref{t1}), and the converse holds when char$(R) = 2$  (Theorem~\ref{t2}).  Moreover, a nonzero sdf-absorbing ideal is a prime ideal when $2 \in U(R)$ (Theorem~\ref{t3}). Throughout this paper, properties of $2 \in R$ will play an important role. In Section~\ref{s3}, we study when all proper ideals or nonzero proper ideals of a commutative ring are sdf-absorbing ideals. In particular, every nonzero proper ideal of a commutative ring $R$ is an sdf-absorbing ideal of $R$ if and only if $R/nil(R)$ is a von Neumann regular ring (Theorem~\ref{t20}). In addition, we determine when every proper ideal or nonzero proper ideal of a commutative von Neumann regular ring is an sdf-absorbing ideal (Theorems~\ref{t5}, \ref{t85}, and \ref{t101}). In Section~\ref{s4}, we give several additional results about sdf-absorbing ideals. For example, we determine the sdf-absorbing ideals in a PID (Corollary~\ref{c5}) and the direct product of two commutative rings (Theorem~\ref{t600}).  We also study sdf-absorbing ideals in polynomial rings (Theorems~\ref{t205} and \ref{t569}), idealizations (Theorem~\ref{t907}), amalgamation rings (Theorem~\ref{t709}), and $D + M$ constructions (Theorem~\ref{t763}). In Section~\ref{s5}, the final section, we  briefly study weakly sdf-absorbing ideals. Several results for sdf-absorbing ideals have analogs for weakly sdf-absorbing ideals. Many examples are given throughout to illustrate the results.

We assume throughout that all rings are commutative with nonzero identity and that $f(1) = 1$ for all ring homomorphisms $f : R \longrightarrow T$. Let $R$ be a commutative ing. Then dim$(R)$ denotes the Krull dimension of $R$, char$(R)$ the characteristic of $R$, $J(R)$ the Jacobson radical of $R$, $nil(R)$ the ideal of nilpotent elements of $R$, $Z(R)$ the set of zero-divisors of $R$, and $U(R)$ the group of units of $R$. The ring $R$ is {\it reduced} if $nil(R) = \{0\}$. Recall that a commutative ring $R$ is {\it von Neumann regular} if for every $x \in R$, there is a $y \in R$ such that $x^2y = x$. Equivalently, $R$ is von Neumann regular if and only if $R$ is reduced and dim$(R) = 0$ (\cite[Theorem 3.1]{H}).

As usual, $\mathbb{Z}$, $\mathbb{Q}$, $\mathbb{Z}_n$, and $\mathbb{F}_q$ will denote the integers, rationals,  integers modulo $n$, and the finite field with $q$ elements, respectively. For any undefined concepts or terminology, see \cite{G, H, K}.

\section{Properties of sdf-absorbing ideals} \label{s2}

In this section, we give some basic properties of square-difference factor absorbing ideals. We begin with the definition.

\begin{definition}
{\rm A proper ideal $I$ of a commutative ring $R$ is a {\it square-difference factor absorbing ideal} (sdf-absorbing ideal) of $R$ if whenever  $a^2 - b^2 \in I$ for $0 \neq a, b \in R$, then $a + b \in I$ or  $a - b \in I$.}
\end{definition}

Clearly a prime ideal is an sdf-absorbing ideal. Although the converse may fail (see Example~\ref{e1}(a)), we next show that nonzero sdf-absorbing ideals are always radical ideals.

\begin{thm}\label{t1}
Let $I$ be a nonzero sdf-absorbing ideal of a commutative ring $R$. Then $I$ is a radical ideal of $R$.
\end{thm}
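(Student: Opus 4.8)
The plan is to reduce to the square-nilpotent case: it suffices to show that if $x \in R$ satisfies $x^2 \in I$ and $x \neq 0$, then $x \in I$. Granting this, suppose $x^n \in I$ for some integer $n \geq 1$ and take $n$ minimal with this property; I claim $n = 1$, so that $x \in I$ and hence $I$ is radical. If instead $n \geq 2$, then $n - 1 \geq 1$, so $x^{n-1} \notin I$ by minimality; in particular $x^{n-1} \neq 0$, since $0 \in I$. But $(x^{n-1})^2 = x^{n-2}\,x^n \in I$ (note $n - 2 \geq 0$, so this is $x^n$ multiplied by an element of $R$), so applying the reduced claim to $x^{n-1}$ gives $x^{n-1} \in I$, a contradiction. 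Hence $n = 1$.

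For the reduced claim, the key point is to exploit the hypothesis $I \neq \{0\}$: fix some $0 \neq c \in I$. Since $I$ is an ideal, $c^2 = c\cdot c \in I$, and therefore $x^2 - c^2 \in I$. Now $x \neq 0$ and $c \neq 0$, so the sdf-absorbing property applies to the pair $(a,b) = (x,c)$ and yields $x + c \in I$ or $x - c \in I$. In either case, since $c \in I$, we conclude $x = (x+c) - c \in I$ or $x = (x-c) + c \in I$.

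The argument is short, and the only thing to watch is the bookkeeping forced by the restriction $a, b \neq 0$ in the definition of an sdf-absorbing ideal: one may only feed the property nonzero elements, which is precisely why the reduced claim is stated with the hypothesis $x \neq 0$, why it is essential that the chosen $c \in I$ is nonzero, and why the minimality argument is phrased so as to guarantee $x^{n-1} \neq 0$. I expect no genuine obstacle beyond this. Finally, note that nothing in the argument uses any hypothesis on $2 \in R$; this is consistent with the fact that upgrading ``radical'' to ``prime'' (Theorem~\ref{t3}), or obtaining a converse to the theorem (Theorem~\ref{t2}), does require such hypotheses.
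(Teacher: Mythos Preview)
Your proof is correct and follows essentially the same approach as the paper: pick a nonzero $c \in I$, note $x^2 - c^2 \in I$, and apply the sdf-absorbing property to conclude $x \in I$. The only difference is that you spell out explicitly the standard reduction from ``$x^n \in I \Rightarrow x \in I$'' to ``$x^2 \in I \Rightarrow x \in I$'' via a minimality argument, whereas the paper simply asserts that the latter suffices.
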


\begin{proof}
It is sufficient to show that for  $0 \neq a \in R$, we have $a \in I$ whenever $a^2 \in I$. Since $I$ is a nonzero ideal of $R$, there is a $0 \neq i \in I$. Thus $a^2 - i^2 \in I$;  so $a + i \in I$ or $a - i \in I$ since $I$ is an sdf-absorbing ideal of $R$. Hence $a \in I$, and thus $I$ is a radical ideal of $R$.
\end{proof}

\begin{remk}\label{e0}
{\rm (a)  The ``nonzero" hypothesis is needed in Theorem~\ref{t1}. It is easily verified that $\{0\}$ is an sdf-absorbing ideal of $\mathbb{Z}_4$ (cf. Theorem~\ref{t9}), but not a radical ideal of $\mathbb{Z}_4$.
	
(b)  Theorem~\ref{t1} also shows that the ``$a, b \neq 0$" hypothesis is not needed in the definition of sdf-absorbing ideal when the ideal is nonzero.}
\end{remk}

A radical ideal need not be an sdf-absorbing ideal, see Example~\ref{e1}(a). However, the converse of Theorem~\ref{t1} does hold when char$(R) = 2$, and the ``nonzero ideal''  hypothesis is not needed.

\begin{thm}\label{t2}
Let $I$ be a radical ideal of a commutative ring $R$ with char$(R) = 2$. Then $I$ is an sdf-absorbing ideal of $R$.
\end{thm}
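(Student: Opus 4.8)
The plan is to exploit the characteristic‑$2$ identity $a^2 - b^2 = (a+b)^2$, after which the radical hypothesis does all the work. First I would record this identity: since char$(R) = 2$, we have $2ab = 0$ and $-b^2 = b^2$, so
\[
(a+b)^2 = a^2 + 2ab + b^2 = a^2 + b^2 = a^2 - b^2
\]
for all $a, b \in R$. Thus the quadratic expression governing the sdf‑absorbing condition is literally a perfect square in this setting.

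Next, suppose $a^2 - b^2 \in I$ for $a, b \in R$ (note that I need not even invoke the ``$a, b \neq 0$'' clause, nor the ``nonzero ideal'' clause, which explains the parenthetical remark in the statement). By the identity above, $(a+b)^2 \in I$. Since $I$ is a radical ideal, it follows that $a + b \in I$, and in particular $a + b \in I$ or $a - b \in I$. Finally, $I$ is proper (as a radical ideal under consideration here, and as required by the definition of an sdf‑absorbing ideal), so $I$ is an sdf‑absorbing ideal of $R$.

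I do not anticipate any genuine obstacle: the only thing to verify is the displayed identity, which is immediate from char$(R) = 2$, and everything else is a one‑line application of the definition of a radical ideal. The content of the theorem is really the observation that passing to characteristic $2$ collapses the difference of squares into a square, so that ``radical'' and ``sdf‑absorbing'' coincide in that case — a point worth contrasting with Theorem~\ref{t1}, where the ``nonzero'' hypothesis was essential.
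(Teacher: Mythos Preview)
Your proof is correct and follows essentially the same route as the paper: both use the characteristic-$2$ identity $(a+b)^2 = a^2 + b^2 = a^2 - b^2$ and then apply the radical hypothesis to conclude $a+b \in I$. Your additional remarks about not needing the ``$a,b \neq 0$'' or ``nonzero ideal'' hypotheses are accurate, though note that the statement as given contains no parenthetical remark.
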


\begin{proof}
Let $I$ be a radical ideal of $R$ and $a^2 - b^2 \in I$ for $0 \neq a, b \in R$. Since char$(R) = 2$, we have $(a + b)^2 = a^2 + b^2  = a^2 - b^2\in I$, and thus $a +b \in I$ since $I$ is a radical ideal of $R$. Hence $I$ is an sdf-absorbing ideal of $R$.
\end{proof}

If char$(R) = 2$, then $a - b = a +b$. The next result determines when $a^2 - b^2 \in I$ for $I$ an sdf-absorbing ideal of $R$ implies both $a +b, a - b \in I$.

\begin{thm}\label{l1}
Let $I$ be an sdf-absorbing ideal of a commutative ring $R$. Then the following statements are equivalent.

	{\rm (a)} If $a^2 - b^2 \in I$ for  $0 \neq a, b \in R$, then $a + b, a - b \in I$.

	{\rm (b)} $2 \in I$.

         {\rm (c)} char$(R/I) = 2$.
\end{thm}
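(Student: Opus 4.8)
The plan is to prove the chain of implications (b) $\Rightarrow$ (c) $\Rightarrow$ (a) $\Rightarrow$ (b), which keeps each step short. The equivalence of (b) and (c) is essentially a restatement: $2 \in I$ means $2 + I = 0$ in $R/I$, i.e. $1 + 1 = 0$ in $R/I$, which is exactly char$(R/I) = 2$ (here $R/I \neq 0$ since $I$ is proper, so this characteristic statement makes sense). No use of the sdf-absorbing hypothesis is needed for this part.

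For (c) $\Rightarrow$ (a), suppose char$(R/I) = 2$ and $a^2 - b^2 \in I$ for $0 \neq a, b \in R$. Since $I$ is sdf-absorbing, $a + b \in I$ or $a - b \in I$. I would show that in $R/I$ with characteristic $2$ we have $a + b \equiv a - b$: indeed $(a + b) - (a - b) = 2b \in I$ because $2 \in I$ (which follows from char$(R/I) = 2$, i.e. the already-established (c) $\Rightarrow$ (b) direction, or directly). Hence whichever of $a+b$, $a-b$ lies in $I$, so does the other, giving (a).

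For (a) $\Rightarrow$ (b), I need to produce a witness $a^2 - b^2 \in I$ with both $a + b, a - b \in I$ forced, and extract $2 \in I$ from it. The natural choice, since $I$ is nonzero when it is not $\{0\}$, is to pick $0 \neq i \in I$ and set $a = i$, $b = i$ (both nonzero): then $a^2 - b^2 = 0 \in I$, so by (a) both $a + b = 2i \in I$ and $a - b = 0 \in I$ — but $2i \in I$ already holds trivially, so this witness is useless. Instead I would take $a = 1 + i$ and $b = 1$ (noting $1 \neq 0$ and $1 + i \neq 0$, the latter because $I$ is proper): then $a^2 - b^2 = (1+i)^2 - 1 = 2i + i^2 \in I$, and by (a) both $a + b = 2 + i \in I$ and $a - b = i \in I$; from $2 + i \in I$ and $i \in I$ we get $2 \in I$. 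This handles the case $I \neq \{0\}$. If $I = \{0\}$, then (a) says $a^2 = b^2$ with $a, b \neq 0$ forces $a = b = 0$, which is vacuous unless we already know something; here one checks (a) with $a = b = 1$ fails to give $2 = 0$ directly, so I should instead reconsider: when $I = \{0\}$, take $a = 1$, $b = -1$ (both nonzero, assuming $-1 \neq 0$, i.e. char $\neq 2$; if char $= 2$ then $2 = 0 \in I$ anyway), so $a^2 - b^2 = 0 \in I$ and (a) forces $a + b = 0 \in I$, i.e. $2 = 0$, giving $2 \in I$.

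The main obstacle is the bookkeeping around the ``$0 \neq a, b$'' restriction in hypothesis (a): one must choose the witness elements so that they are genuinely nonzero, which is why the case $I = \{0\}$ (equivalently, whether $-1 = 0$) needs separate, if brief, attention. Everything else is routine manipulation in the quotient $R/I$. I would present (b) $\Leftrightarrow$ (c) first as the trivial observation, then (c) $\Rightarrow$ (a), then (a) $\Rightarrow$ (b) with the witness $a = 1 + i$, $b = 1$ for nonzero $I$ and the degenerate check for $I = \{0\}$.
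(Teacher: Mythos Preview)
Your proof is correct, with one small slip: in the $I = \{0\}$ case with $a = 1$, $b = -1$, it is $a - b = 2$ that (a) forces into $I$, not $a + b = 0$ (which is already trivially in $I$). The argument survives once you swap those.

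That said, your $(a) \Rightarrow (b)$ step is more elaborate than needed. The paper simply takes $a = b = 1$: both are nonzero since $1 \neq 0$, and $a^2 - b^2 = 0 \in I$, so hypothesis (a) immediately yields $2 = a + b \in I$. This single witness works uniformly and avoids the case split on whether $I = \{0\}$, the auxiliary choice of $i \in I$, and the check that $1 + i \neq 0$. Your $(c) \Rightarrow (a)$ step is essentially identical to the paper's $(b) \Rightarrow (a)$ (using $(a+b) - (a-b) = 2b \in I$), and your $(b) \Leftrightarrow (c)$ is the same trivial observation.
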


\begin{proof}
	$(a) \Rightarrow (b)$ Let $a = b = 1$. Then $a^2 - b^2 = 0 \in I$, and thus  $2 = 1 + 1 = a + b \in I$ by hypothesis.
	
	$(b) \Rightarrow (a)$ Assume that $a^2 - b^2 \in I$ for $0 \neq a, b \in R$.   Then $a + b \in I$ or $a - b \in I$ since $I$ is an sdf-absorbing ideal of $R$, and   $2b \in I$ since $2 \in I$. Thus $a - b = (a + b) -2b \in I$ if $a + b \in I$, and  $a + b = (a - b)+ 2b \in I$ if $ a - b \in I$.

$(b) \Leftrightarrow (c)$ This is clear.
\end{proof}

The next result gives a case where sdf-absorbing ideals are prime ideals. It is easily verified that $\{0\}$ is an sdf-absorbing ideal of $\mathbb{Z}_9$ (cf. Theorem~\ref{t9}); so the ``nonzero'' hypothesis is needed in the following theorem.

\begin{thm}\label{t3}
Let $I$ be a nonzero sdf-absorbing ideal of a commutative ring $R$ with $2 \in U(R)$. Then $I$ is a prime ideal of $R$.
\end{thm}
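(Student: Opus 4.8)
The plan is to exploit the classical identity $xy = \left(\frac{x+y}{2}\right)^2 - \left(\frac{x-y}{2}\right)^2$, which makes sense here because $2 \in U(R)$, in order to pass from a product lying in $I$ to a difference of squares lying in $I$, and then to invoke the sdf-absorbing property. Theorem~\ref{t1} will be needed to dispose of the degenerate cases.

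Concretely, I would first record that, by Theorem~\ref{t1}, the nonzero sdf-absorbing ideal $I$ is a radical ideal, so $c^2 \in I$ implies $c \in I$ for every $c \in R$. Now take $x, y \in R$ with $xy \in I$; we must show $x \in I$ or $y \in I$. Set $a = 2^{-1}(x + y)$ and $b = 2^{-1}(x - y)$, so that $a + b = x$, $a - b = y$, and $a^2 - b^2 = (a + b)(a - b) = xy \in I$. If $a \neq 0$ and $b \neq 0$, then the sdf-absorbing hypothesis applies directly and yields $x = a + b \in I$ or $y = a - b \in I$, as desired. If instead $a = 0$, then $y = -x$, so $xy = -x^2 \in I$, hence $x^2 \in I$, and radicality forces $x \in I$ (and then $y = -x \in I$ as well); the case $b = 0$ is symmetric, since then $x = y$ and $x^2 = xy \in I$. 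In every case $x \in I$ or $y \in I$, so $I$ is a prime ideal of $R$.

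I do not anticipate a serious obstacle. The only subtlety is that the definition of an sdf-absorbing ideal requires the two elements to be nonzero, which is precisely why the detour through Theorem~\ref{t1} (radicality) is required in order to handle the situations where $a$ or $b$ equals $0$.
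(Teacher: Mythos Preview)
Your proof is correct and follows essentially the same approach as the paper's: both use the substitution $a = (x+y)/2$, $b = (x-y)/2$ to convert $xy \in I$ into $a^2 - b^2 \in I$, apply the sdf-absorbing property in the generic case, and invoke Theorem~\ref{t1} (radicality) to handle the degenerate cases $a = 0$ or $b = 0$ (equivalently $y = -x$ or $y = x$).
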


\begin{proof}
Let $I$ be a nonzero sdf-absorbing ideal of $R$ and $xy \in I$ for $x, y \in R$.  We may assume that  $x, y \not = 0$.  First, assume that $y \not = x$ and $y \not = -x$. Let $a = (x + y)/2, b = (x - y)/2 \in R$. Since $y \not = x$ and $y \not = -x$, we have $a^2 - b^2 = xy \in I$ and $a, b \not = 0$. Thus $x = a + b  \in I$ or  $y = a - b  \in I$ since $I$ is an sdf-absorbing ideal of $R$. Next, assume that $y = x$ or $y = -x$. Then $x^2 \in I$, and hence $x \in I$ since $I$ is a  radical ideal of $R$ by Theorem~\ref{t1}.  Thus $I$ is a prime ideal of $R$.
\end{proof}

The following criterion for an ideal to be an sdf-absorbing ideal will often prove useful.

\begin{thm} \label{t4}
Let $I$ be a proper ideal of a commutative ring $R$. Then the following statements are equivalent.
	
{\rm (a)} $I$ is an sdf-absorbing ideal of $R$.

{\rm (b)} If $ab \in I$ for $a, b \in R \setminus I$, then the system of linear equations $X + Y = a$, $X - Y = b$ has no nonzero solution in $R$ {\rm (}i.e., there are no $0 \neq x, y \in R$ that satisfy both equations{\rm )}.
	
\end{thm}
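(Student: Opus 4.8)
The plan is to prove both implications by the same elementary change of variables, exploiting that the linear substitution $(x,y)\mapsto(x+y,\,x-y)$ converts the hypothesis ``$x^2-y^2\in I$'' into ``the product of the two new coordinates lies in $I$'' and back again. Crucially, this substitution needs no division by $2$, which is why the criterion holds in full generality (in contrast to Theorem~\ref{t3}); the substitution is essentially its own partner, so a single one-line computation will serve for both directions, and the only real care needed is in matching up the ``nonzero'' conditions.

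For $(a)\Rightarrow(b)$, I would assume $I$ is sdf-absorbing and take $a,b\in R\setminus I$ with $ab\in I$. Arguing by contradiction, suppose the system $X+Y=a$, $X-Y=b$ has a nonzero solution, i.e.\ there are $0\neq x,y\in R$ with $x+y=a$ and $x-y=b$. Then $x^2-y^2=(x+y)(x-y)=ab\in I$ with $x,y\neq 0$, so the sdf-absorbing property yields $x+y\in I$ or $x-y\in I$, that is, $a\in I$ or $b\in I$, contradicting $a,b\notin I$. Hence no such nonzero solution exists.

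For $(b)\Rightarrow(a)$, I would assume (b) and take $0\neq a,b\in R$ with $a^2-b^2\in I$, aiming to show $a+b\in I$ or $a-b\in I$. Suppose not: then $u:=a+b$ and $v:=a-b$ both lie in $R\setminus I$, while $uv=(a+b)(a-b)=a^2-b^2\in I$. Applying hypothesis (b) to the pair $u,v$ shows that the system $X+Y=u$, $X-Y=v$ has no nonzero solution. But $(X,Y)=(a,b)$ solves both equations and $a,b\neq 0$, a contradiction. Therefore $a+b\in I$ or $a-b\in I$, so $I$ is an sdf-absorbing ideal.

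I do not anticipate a genuine obstacle here; the argument is a direct translation between the two descriptions. The only point requiring attention is the precise reading of ``nonzero solution'' (both coordinates $x,y$ nonzero), which is exactly what lines up with the ``$0\neq a,b$'' appearing in the definition of sdf-absorbing ideal, so that neither direction leaks a spurious case where one coordinate vanishes.
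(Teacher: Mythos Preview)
Your proof is correct and follows essentially the same approach as the paper's own argument: both directions use the substitution $(x,y)\leftrightarrow(x+y,x-y)$ and the identity $x^2-y^2=(x+y)(x-y)$, with the same contradiction set-up for $(a)\Rightarrow(b)$. The only cosmetic difference is that the paper proves $(b)\Rightarrow(a)$ directly rather than by contradiction, but the underlying computation is identical.
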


\begin{proof} $(a) \Rightarrow (b)$ Suppose that $I$ is an sdf-absorbing ideal of $R$, $ab \in I$ for $a, b \in R \setminus I$, and the system of linear equations $X + Y = a$, $X - Y = b$ has a solution in $R$ for some $0 \neq x, y \in R$. Then $x^2 - y^2 = ab \in I$, but $x + y = a \not \in I$ and $x - y = b \not \in I$, a contradiction.
	
	$(b) \Rightarrow (a)$ Assume that $x^2 - y^2 \in I$ for $0 \neq x, y \in R$. Let $a = x + y$ and $b = x - y$. Then $ab = x^2 - y^2  \in I$, and the system of linear equations $X + Y = a, X - Y = b$ has a solution in $R$ for $0 \neq x, y \in R$. Thus $x + y = a \in I$ or $x - y = b \in I$, and hence $I$ is an sdf-absorbing ideal of $R$.
\end{proof}

We next give several examples of sdf-absorbing ideals.

\begin{exa} \label{e1}
{\rm (a)  Using Theorem~\ref{t4}, one can easily verify that a proper ideal $I$ of $\mathbb{Z}$ is an sdf-absorbing ideal of $\mathbb{Z}$  if and only if $I$ is a prime ideal of $\mathbb{Z}$ or $I = 2q\mathbb{Z}$ for some odd prime integer $q$. Note that if $p, q$ are nonassociate odd prime integers, then $pq\mathbb{Z}$ is a radical ideal of $\mathbb{Z}$ which is not an sdf-absorbing ideal of $\mathbb{Z}$. Thus the converse of Theorem~\ref{t1} may fail. See Corollary~\ref{c5} for the general PID case.
	
	(b) Let $R$ be a boolean ring. Then every proper ideal of $R$ is an sdf-absorbing ideal of $R$ since $x^2 = x$ for every $x \in R$.
	
	(c) Let $R = \mathbb{Z}[W, T]$. Since $(2W)(2T) \in WTR$  and the system of linear equations $x + y = 2W$, $x - y = 2T$ has a nonzero solution in $R$, we have that $WTR$ is not an sdf-absorbing ideal of $R$ by Theorem \ref{t4}. However, note that $WTR$ is a radical ideal of $R$.
	
	(d)  Let  $R = \mathbb{Z}_2 \times \cdots \times \mathbb{Z}_2 \times \mathbb{Z}$ and $I = I_1 \times \cdots \times I_n \times J$ be an ideal of $R$. Using Theorem~\ref{t4} again, one can easily verify that $I$ is an sdf-absorbing ideal of $R$ if and only if $J$ is a prime ideal of $\mathbb{Z}$ or $J = 2q\mathbb{Z}$ for some odd prime integer $q$.

   (e)  Let $R = K[X]$, where $K$ is a field, and $I = (X + 1)(X - 1)R$. Then $I$ is a radical ideal of $R$ if and only if char$(K) \neq 2$. Thus $I$ is never an sdf-absorbing ideal of $R$ by Theorem~\ref{t1} and Theorem~\ref{t3}.

     (f)  Let $R = K[X]$, where $K$ is a field, and $I = f_1^{n_1} \cdots k_k^{n_k}R$ for nonassociate irreducible $f_1, \ldots, f_k \in R$ and positive integers $n_1, \ldots, n_k$. Then $I$ is a radical (resp., prime) ideal of $R$ if and only if $n_1 = \cdots = n_k = 1$ (resp., $k = n_k = 1$). If char$(K) = 2$, then $I$ is an sdf-absorbing ideal of $R$ if and only if $I$ is a radical ideal of $R$ by Theorem~\ref{t2}. If char$(K) \neq 2$, then $I$ is an sdf-absorbing ideal of $R$ if and only if $I$ is a prime ideal of $R$ by Theorem~\ref{t3}.

(g)  Let $R$ be a valuation domain. Then every radical ideal of $R$ is a prime ideal (\cite[Theorem 17.1(2)]{G}); so the prime ideals are the only      sdf-absorbing ideals of $R$ by Theorem~\ref{t1}.}
\end{exa}

The next two theorems and corollary follow directly from the definitions and Remark~\ref{e0}(b); so their proofs are omitted.

\begin{thm} \label{t349}
Let $I$ be an sdf-absorbing ideal of a commutative ring $R$, and let $S$ be a multiplicatively closed subset of $R$ with $I \cap S = \emptyset$. Then $I_S$ is an sdf-absorbing ideal of $R_S$.
\end{thm}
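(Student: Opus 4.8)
The plan is to verify the defining property of an sdf-absorbing ideal for $I_S$ directly, by clearing denominators and invoking the fact that $I$ is sdf-absorbing in $R$. First I would record the two standard facts about localization that I need: since $I \cap S = \emptyset$, the ideal $I_S$ is proper in $R_S$; and for $r \in R$ and $t \in S$, one has $r/t \in I_S$ if and only if $ur \in I$ for some $u \in S$.

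So let $0 \neq \alpha,\beta \in R_S$ with $\alpha^2 - \beta^2 \in I_S$, and write $\alpha = a/s$ and $\beta = b/t$ with $a,b \in R$ and $s,t \in S$. Then $\alpha^2 - \beta^2 = (a^2t^2 - b^2s^2)/(st)^2 \in I_S$, so there is a $u \in S$ with $u(a^2t^2 - b^2 s^2) \in I$. The key point is to multiply once more by $u$: since $I$ is an ideal,
\[
(uat)^2 - (ubs)^2 = u^2(a^2t^2 - b^2s^2) \in I ,
\]
which is now an honest difference of squares of elements of $R$. Moreover $uat \neq 0$, for otherwise $ut \in S$ would annihilate $a$, contradicting $\alpha = a/s \neq 0$ in $R_S$; likewise $ubs \neq 0$. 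Hence the sdf-absorbing hypothesis on $I$ gives $uat + ubs \in I$ or $uat - ubs \in I$, that is, $u(at+bs) \in I$ or $u(at - bs) \in I$. By the second fact above, this says precisely that $\alpha + \beta = (at+bs)/(st) \in I_S$ or $\alpha - \beta = (at - bs)/(st) \in I_S$, which is what we wanted; thus $I_S$ is an sdf-absorbing ideal of $R_S$.

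The only subtlety — and it is a mild one — is that an element of $I_S$, written over a common denominator, need not have numerator lying in $I$: one gets a numerator in $I$ only after multiplying by a suitable $u \in S$, and that destroys the difference-of-squares form. The remedy is the extra factor of $u$ used above, after which the argument is pure bookkeeping with the localization formalism. One could equally well organize the same computation around the criterion of Theorem~\ref{t4} together with Remark~\ref{e0}(b), which is presumably what the authors have in mind when they omit the proof. I do not anticipate any real obstacle beyond this.
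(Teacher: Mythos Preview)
Your proof is correct and is exactly the kind of direct verification from the definitions that the paper intends: the authors explicitly omit the argument, remarking only that it ``follows directly from the definitions and Remark~\ref{e0}(b).'' Your handling of the one genuine subtlety---multiplying by an extra factor of $u$ to restore the difference-of-squares form, and checking $uat,\,ubs\neq 0$ via the definition of zero in $R_S$---is precisely what is needed, and nothing more is required.
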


\begin{thm}\label{t7}
Let $f : R \longrightarrow T$ be a homomorphism of commutative rings.

{\rm (a)} If  $J$ is a nonzero sdf-absorbing ideal of $T$, then $f^{-1}(J)$ is an sdf-absorbing ideal of $R$.

{\rm (b)} If $f$ is injective and $J$ is an sdf-absorbing ideal of $T$, then $f^{-1}(J)$ is an sdf-absorbing ideal of $R$.

{\rm (c)} If $f$  is surjective and $I$ is an sdf-absorbing ideal of $R$ containing $ker(f)$, then  $f(I)$ is an sdf-absorbing ideal of $T$.
\end{thm}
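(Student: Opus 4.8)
The plan is to treat the three parts in parallel, since each amounts to transporting a square-difference relation across $f$ and then invoking the sdf-absorbing hypothesis on the appropriate side. In every case I would first dispose of the routine check that the ideal in question is \emph{proper}: for $f^{-1}(J)$ this is immediate from $f(1) = 1 \notin J$, and for $f(I)$ (when $f$ is surjective) one notes that $1 = f(i)$ with $i \in I$ would give $i - 1 \in \ker f \subseteq I$, hence $1 \in I$, a contradiction.

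For part (a), take $0 \neq a, b \in R$ with $a^2 - b^2 \in f^{-1}(J)$ and apply $f$ to get $f(a)^2 - f(b)^2 = f(a^2 - b^2) \in J$. The one subtlety is that $f(a)$ or $f(b)$ might vanish, so the defining condition for $J$ does not literally apply; this is precisely where ``$J$ nonzero'' enters, through Remark~\ref{e0}(b), which strips off that nonzero requirement. One then gets $f(a + b) \in J$ or $f(a - b) \in J$ and pulls back. Part (b) runs identically, except that injectivity of $f$ forces $f(a), f(b) \neq 0$ whenever $a, b \neq 0$, so the sdf-absorbing property of $J$ applies directly and no hypothesis on the size of $J$ is needed.

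For part (c), given $0 \neq c, d \in T$ with $c^2 - d^2 \in f(I)$, use surjectivity to write $c = f(a)$ and $d = f(b)$; then $a, b \neq 0$ since $f(0) = 0$. Writing $f(a^2 - b^2) = f(i)$ with $i \in I$ and using $\ker f \subseteq I$ gives $a^2 - b^2 - i \in \ker f \subseteq I$, hence $a^2 - b^2 \in I$. Now apply that $I$ is sdf-absorbing to obtain $a + b \in I$ or $a - b \in I$, and push forward to conclude $c + d \in f(I)$ or $c - d \in f(I)$.

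The only steps requiring any care are the bookkeeping around the ``$0 \neq a, b$'' clause of the definition — which is exactly why the three parts carry different hypotheses — and, in (c), the use of $\ker f \subseteq I$ to move membership from $f(I)$ back to $I$; everything else is formal, which is presumably why the paper omits the proof.
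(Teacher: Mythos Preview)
Your proof is correct and matches the paper's approach exactly: the paper omits the proof entirely, stating that it ``follows directly from the definitions and Remark~\ref{e0}(b),'' and your write-up is precisely the routine verification this entails, including the explicit use of Remark~\ref{e0}(b) in part~(a) to handle the case where $f(a)$ or $f(b)$ may vanish.
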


\begin{cor} \label{c2}
{\rm (a)} Let $R \subseteq T$ be an extension of commutative rings and $J$ an sdf-absorbing ideal of $T$. Then     $J \cap R$ is an sdf-absorbing ideal of $R$.

{\rm (b)}  Let $ J \subseteq I$ be ideals of a commutative ring R. If $I$ is an sdf-absorbing ideal of $R$, then $I/J$ is an sdf-absorbing ideal of $R/J$.

{\rm (c)} If $J \subsetneq I$, then $I/J$ is an sdf-absorbing ideal of $R/J$ if and only if $I$ is an sdf-absorbing ideal of $R$.
\end{cor}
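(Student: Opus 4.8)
The plan is to deduce all three parts from Theorem~\ref{t7}, applying it to the inclusion map in part (a) and to the canonical quotient map in parts (b) and (c).

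For part (a), I would take $f : R \longrightarrow T$ to be the inclusion homomorphism, which is injective. Then $f^{-1}(J) = J \cap R$, and this is a proper ideal of $R$ since $1 \notin J$. Theorem~\ref{t7}(b) then immediately gives that $J \cap R$ is an sdf-absorbing ideal of $R$. (Alternatively, a direct check works: if $a^2 - b^2 \in J \cap R$ with $0 \neq a, b \in R$, then regarding $a, b$ as nonzero elements of $T$ and using that $J$ is sdf-absorbing in $T$ yields $a + b \in J$ or $a - b \in J$; since $a + b, a - b \in R$, the relevant element lies in $J \cap R$.) For part (b), let $\pi : R \longrightarrow R/J$ be the canonical surjection, so $\ker(\pi) = J \subseteq I$ and $\pi(I) = I/J$, which is proper in $R/J$ because $I \neq R$. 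Theorem~\ref{t7}(c) then gives that $I/J$ is an sdf-absorbing ideal of $R/J$.

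For part (c), the backward implication is exactly part (b). For the forward implication, observe that $J \subsetneq I$ forces $I/J \neq \{0\}$, so $I/J$ is a \emph{nonzero} sdf-absorbing ideal of $R/J$; applying Theorem~\ref{t7}(a) to $\pi : R \longrightarrow R/J$ shows that $\pi^{-1}(I/J)$ is an sdf-absorbing ideal of $R$, and since $J \subseteq I$ one has $\pi^{-1}(I/J) = I$. Hence $I$ is an sdf-absorbing ideal of $R$.

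There is no real obstacle here; the only point requiring care is the bookkeeping between the ``nonzero ideal'' hypothesis and the ``$0 \neq a, b$'' hypothesis, which is precisely what Remark~\ref{e0}(b) handles (and is the reason Theorem~\ref{t7}(a) needs $J$ nonzero while (b) does not). If one prefers to prove (b) directly via Remark~\ref{e0}(b) rather than through Theorem~\ref{t7}(c), one should note the trivial edge case $I = \{0\}$ (which forces $J = \{0\}$ and $R/J \cong R$); using Theorem~\ref{t7}(c) avoids even that. The most ``computational'' step is verifying $\pi^{-1}(I/J) = I$ when $J \subseteq I$, which is routine.
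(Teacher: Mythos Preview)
Your proposal is correct and matches the paper's intent: the paper omits the proof entirely, stating that the corollary (along with Theorems~\ref{t349} and~\ref{t7}) follows directly from the definitions and Remark~\ref{e0}(b), and since Corollary~\ref{c2} is placed immediately after Theorem~\ref{t7}, deriving each part from the appropriate clause of Theorem~\ref{t7} is exactly what is expected. Your handling of the ``nonzero'' hypothesis in part~(c) via $J \subsetneq I$ is the key observation, and it is precisely the reason the paper separates parts~(b) and~(c).
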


The following examples show that the ``nonzero'' hypothesis is needed in Theorem~\ref{t7}(a) and the ``$ker(f) \subseteq I$'' hypothesis is needed in Theorem~\ref{t7}(c).
	
\begin{exa}\label{e2}
{\rm (a) Let $f: \mathbb{Z} \longrightarrow \mathbb{Z}/4\mathbb{Z} = \mathbb{Z}_4$ be the natural epimorphism. By Remark ~\ref{e0}(a), $\{0\}$ is an sdf-absorbing ideal of $\mathbb{Z}_4$, but $f^{-1}(\{0\}) =  4\mathbb{Z}$ is not an sdf-absorbing ideal of $\mathbb{Z}$ by Example~\ref{e1}(a). Thus the ``nonzero'' hypothesis is needed in Theorem~\ref{t7}(a).

(b)  Let $f : \mathbb{Z}[X] \longrightarrow \mathbb{Z}$ be the epimorphism given by $f(g(X)) = g(0)$. Then $I = (X + 4)$ is a prime ideal, and thus an sdf-absorbing ideal, of $\mathbb{Z}[X]$, but  $f((X+4)) = 4\mathbb{Z}$ is not an sdf-absorbing ideal of $\mathbb{Z}$ by Example~\ref{e1}(a). Note that $ker(f) = (X) \not \subseteq (X + 4) = I$; so the ``$ker(f) \subseteq I$'' hypothesis is needed in Theorem~\ref{t7}(c).}
\end{exa}

\section{When every ideal is an sdf-absorbing ideal} \label{s3}

In this section, we consider when every proper ideal or nonzero proper ideal of a commutative ring is an sdf-absorbing ideal. Recall that every proper ideal of a commutative ring $R$ is a radical ideal if and only if $R$ is von Neumann regular (cf. \cite[Proposition 1.1]{AB1}). We use this fact to show that if every nonzero proper ideal of $R$ is an sdf-absorbing ideal of $R$, then  $R/nil(R)$ is von Neumann regular. In particular, if in addition $R$ is reduced, then $R$ is von Neumann regular.

\begin{thm} \label{t20}  Let $R$ be commutative ring such that every nonzero proper ideal of $R$ is an sdf-absorbing ideal of $R$. Then  $R/nil(R)$ is von Neumann regular. In particular, dim$(R) = 0$. Moreover, if $R$ is not reduced, then $nil(R)$ is the unique minimal nonzero ideal of $R$.
\end{thm}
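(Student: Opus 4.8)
The plan is to prove the three conclusions in sequence: first that $R/nil(R)$ is von Neumann regular, then deduce $\dim(R) = 0$, and finally handle the structure of $nil(R)$ in the non-reduced case.

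First I would show that every proper ideal of $\bar R := R/nil(R)$ is a radical ideal, and then invoke the recalled fact that this forces $\bar R$ to be von Neumann regular. To see the radical-ideal claim, let $\bar I$ be a proper ideal of $\bar R$ and pull it back to a proper ideal $I \supseteq nil(R)$ of $R$. If $I \neq nil(R)$, then $I$ is a nonzero proper ideal of $R$, hence an sdf-absorbing ideal, hence (since it is nonzero) a radical ideal of $R$ by Theorem~\ref{t1}; so $\bar I = I/nil(R)$ is radical in $\bar R$. If $I = nil(R)$, then $\bar I = \{0\}$, which is trivially radical in $\bar R$ since $\bar R$ is reduced. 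Either way $\bar I$ is radical, so $\bar R$ is von Neumann regular, and in particular $\dim(\bar R) = 0$; since $nil(R)$ is contained in every prime of $R$, we get $\dim(R) = \dim(\bar R) = 0$.

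For the last statement, suppose $R$ is not reduced, so $nil(R) \neq \{0\}$. I would argue that $nil(R)$ must be a minimal nonzero ideal and is the only one. Pick any $0 \neq x \in nil(R)$. The key point: if $nil(R)$ were \emph{not} minimal, or if there were another minimal nonzero ideal, one should be able to produce a nonzero proper ideal that fails to be radical, contradicting Theorem~\ref{t1} together with the sdf-absorbing hypothesis. Concretely, the principal ideal $xR$ is a nonzero ideal consisting of nilpotents; if it is proper it must be radical, forcing $xR = \{0\}$ (a radical ideal inside $nil(R)$ is $\{0\}$), i.e. $x = 0$, a contradiction — unless $xR = R$, which is impossible since $x$ is nilpotent. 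Hence \emph{every} nonzero ideal contained in $nil(R)$ equals $R$, which cannot happen; the resolution is that $nil(R)$ contains no proper nonzero ideal other than possibly non-proper ones — so I must be more careful: the correct deduction is that any nonzero ideal $J$ with $J \subsetneq nil(R)$ would be a proper nonzero ideal, hence radical, hence $\{0\}$; so $nil(R)$ has no nonzero proper subideal, i.e. $nil(R)$ is a minimal nonzero ideal of $R$. Uniqueness: any minimal nonzero ideal $L$ either is contained in $nil(R)$ (then $L = nil(R)$ by minimality of both) or, being a simple $R$-module, satisfies $L^2 = L$ or $L^2 = 0$; if $L \not\subseteq nil(R)$ then $L \cap nil(R) = 0$, and one checks $L \oplus$ (something) behavior, but more directly $L$ cannot be radical unless $L$ is generated by an idempotent, in which case $R \cong L \times R'$ and $nil(R)$ lives entirely in $R'$ — then taking an ideal strictly between $\{0\}$ and $L \times nil(R')$ inside the nil-plus-idempotent configuration produces a nonzero non-radical proper ideal, the contradiction.

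**The main obstacle** I expect is precisely this uniqueness-of-the-minimal-nonzero-ideal argument in the non-reduced case: separating the nil part from any idempotent-generated minimal ideal and showing the two cannot coexist. The clean way is probably to note that if $R$ is not reduced then $R$ is not von Neumann regular, so $R$ is indecomposable in the relevant sense, or to directly exhibit, given $0 \neq x \in nil(R)$ with $x^2 = 0$ and any putative second minimal ideal $L$, an explicit nonzero ideal sitting strictly below $nil(R) + L$ that is not radical. The reduced-case halves are routine; the nilpotent bookkeeping is where care is needed.
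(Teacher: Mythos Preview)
Your argument that $R/nil(R)$ is von Neumann regular and that $\dim(R)=0$ is correct and matches the paper's approach; the paper simply records that every nonzero proper ideal is radical by Theorem~\ref{t1}, so every proper ideal of $R/nil(R)$ is radical, and then declares the remaining statements ``clear.''

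For the ``moreover'' clause you are overcomplicating things, and there is a genuine slip in the reasoning. You write that ``a radical ideal inside $nil(R)$ is $\{0\}$,'' and again in your self-correction that a nonzero $J\subsetneq nil(R)$ would be ``radical, hence $\{0\}$.'' This is backwards: any radical ideal $J$ of $R$ automatically \emph{contains} $nil(R)$, since $x\in nil(R)$ gives $x^n=0\in J$ for some $n$ and hence $x\in\sqrt{J}=J$. That single observation is all that is needed, and it is presumably what the paper intends by ``clear'': every nonzero proper ideal is radical by Theorem~\ref{t1}, hence contains $nil(R)$; since $nil(R)$ is itself nonzero and proper when $R$ is not reduced, it is therefore the unique minimal nonzero ideal. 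Both minimality and uniqueness fall out in one stroke, so the obstacle you anticipate --- the idempotent, simple-module, and direct-product case analysis --- never arises.
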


\begin{proof}
Every nonzero proper ideal of $R$ is a radical ideal by Theorem~\ref{t1}. Thus every proper ideal of $R/nil(R)$ is a radical ideal, and hence $R/nil(R)$ is von Neumann regular.  The ``in particular"  and ``moreover'' statements are clear.
\end{proof}

\begin{exa} \label{e10}
{\rm (a) All proper ideals of $\mathbb{Z}_4$ are sdf-absorbing ideals, and all nonzero proper ideals (but not the zero ideal) of $\mathbb{Z}_{25}$ are sdf-absorbing ideals (cf. Theorem~\ref{t9}). Neither ring is reduced (i.e., von Neumann regular).

(b) All proper ideals of $\mathbb{Z}_2 \times \mathbb{Z}_2$ are sdf-absorbing ideals, and all nonzero proper ideals (but not the zero ideal) of $\mathbb{Z}_3 \times \mathbb{Z}_3$ are sdf-absorbing ideals (cf. Example~\ref{e100}). Both of these rings are von Neumann regular.

(c) However, not all nonzero proper ideals in a von Neumann regular ring need be sdf-absorbing ideals (cf. Example~\ref{e100}). For example, let $R = \mathbb{Z}_3 \times \mathbb{Z}_3 \times \mathbb{Z}_3$. Then the ideal $I = \{0\} \times \{0\} \times \mathbb{Z}_3$ is not an sdf-absorbing ideal of $R$ (to see this, let $a = (2,1,0), b = (1,1,0) \in R$). }
\end{exa}

The quasilocal case is easily handled.

\begin{thm} \label{t21}  Let $R$ be a quasilocal commutative ring with maximal ideal $M$. Then every nonzero proper ideal of $R$ is an sdf-absorbing ideal of $R$ if and only if $M$ is the unique prime ideal of $R$, $M$ is principal, and $M^2 = \{0\}$.
\end{thm}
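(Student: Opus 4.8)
The plan is to prove the equivalence directly, first disposing of the degenerate case $M = \{0\}$: here $R$ is a field, the left-hand side holds vacuously (there are no nonzero proper ideals), and the right-hand side holds trivially ($\{0\}$ is the unique prime, is principal, and squares to $\{0\}$). So I may assume $M \neq \{0\}$, equivalently that $R$ is not reduced, throughout.

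For the ``if'' direction, suppose $M$ is the unique prime ideal of $R$, that $M = Rm$ is principal, and that $M^2 = \{0\}$, so $m^2 = 0$. I would first check that $M$ is the \emph{only} nonzero proper ideal of $R$: every proper ideal is contained in $M$, so given $\{0\} \neq I \subseteq M$ and $0 \neq x \in I$, write $x = rm$ with $r \in R$; if $r \in M$ then $x = rm \in M^2 = \{0\}$, a contradiction, so $r \in R \setminus M = U(R)$, whence $m = r^{-1}x \in I$ and $I = M$. Since $M$ is a prime ideal it is an sdf-absorbing ideal, and the implication follows.

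For the ``only if'' direction, assume every nonzero proper ideal of $R$ is an sdf-absorbing ideal. By Theorem~\ref{t20}, $\dim(R) = 0$, and since $R$ is quasilocal this forces $M$ to be the unique prime ideal, so $M = nil(R)$. I would next show $M$ is principal: for any $0 \neq x \in M$ the ideal $Rx$ is a nonzero proper ideal, hence radical by Theorem~\ref{t1}; but every element of $M = nil(R)$ is nilpotent, so each such element has a power equal to $0 \in Rx$, giving $M \subseteq \sqrt{Rx} \subseteq \sqrt{M} = M$, so $Rx = \sqrt{Rx} = M$. Writing $M = Rm$ with $m \neq 0$, it remains to see $M^2 = \{0\}$: if $m^2 \neq 0$ then $Rm^2 = M$ by the same argument, so $m = rm^2$ for some $r \in R$, i.e. $m(1 - rm) = 0$; since $rm \in M$ the element $1 - rm$ is a unit, forcing $m = 0$, a contradiction. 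Hence $M^2 = \{0\}$.

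The only delicate part is the forward direction, and the crucial leverage is the identity $M = nil(R)$ supplied by Theorem~\ref{t20}: it collapses $\sqrt{Rx}$ to $M$ for every nonzero $x \in M$, which is exactly what upgrades ``$Rx$ is radical'' to ``$Rx = M$'' and makes the structure rigid. I do not expect any remaining step to need more than the quasilocal hypothesis ($R \setminus M = U(R)$) together with a one-line computation.
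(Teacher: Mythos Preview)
Your proof is correct and follows essentially the same route as the paper: both use Theorem~\ref{t20} to get $\dim(R)=0$ (hence $M=nil(R)$) and Theorem~\ref{t1} to conclude that every nonzero proper ideal, being radical with $M$ the only prime, must equal $M$; you simply spell out the details (principality of $M$ and $M^2=\{0\}$) that the paper leaves to the reader. One small aside: the parenthetical ``equivalently that $R$ is not reduced'' is premature at the point you state it (a quasilocal ring with $M\neq\{0\}$ can be reduced, e.g.\ a DVR), but since you never use that claim the argument is unaffected.
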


\begin{proof}
We may assume that $R$ is not a field. Suppose that every nonzero proper ideal of $R$ is an sdf-absorbing ideal of $R$. Then $M$ is the unique prime ideal of $R$ by Theorem~\ref{t20}. Moreover, $M$ is the only nonzero proper ideal of $R$ since every nonzero proper ideal of $R$ is a radical ideal by Theorem~\ref{t1}. Thus $M$ is principal and $M^2 = \{0\}$.

Conversely, if $M$ is the unique prime ideal of $R$, $M$ is principal, and $M^2 = \{0\}$, then $M$ is the only nonzero proper ideal of $R$ and is an sdf-absorbing ideal of $R$.
\end{proof}

The next example shows that in the above theorem, $\{0\}$ may or may not be an sdf-absorbing ideal of $R$.

\begin{exa} \label{e50}
{\rm (a) Let $R = \mathbb{Z}_{p^2}$ for $p$ prime. Then $R$ satisfies the conditions of Theorem~\ref{t21}; so every nonzero proper ideal of $R$ is an sdf-absorbing ideal of $R$. However, $\{0\}$ is an sdf-absorbing ideal of $R$ if and only if $p = 2$ or $p = 3$ by Theorem~\ref{t9}.

(b)  Let $R = K[X]/(X^2)$, where $K$ is a field. Then $R$ satisfies the conditions of Theorem~\ref{t21}, so every nonzero proper ideal of $R$ is an sdf-absorbing ideal of $R$. However, it is easily verified that $\{0\}$ is an sdf-absorbing ideal of $R$ if and only if $K = \mathbb{Z}_3$.}
\end{exa}

The next several results consider the case where every proper ideal or nonzero proper ideal of a commutative von Neumann regular ring is an sdf-absorbing ideal. They depend on whether $2$ is zero, a unit, or a nonzero zero-divisor in $R$.

\begin{thm} \label{t5}  Let $R$ be a reduced commutative ring with $2 \in U(R)$.

{\rm (a)}  Every nonzero proper ideal of $R$ is an sdf-absorbing ideal of $R$ if and only if $R$ is a field or $R$ is isomorphic to $F_1 \times F_2$ for fields $F_1, F_2$.

{\rm (b)} Every proper ideal of $R$ is an sdf-absorbing ideal of $R$ if and only if $R$ is a field.
\end{thm}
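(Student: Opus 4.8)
The plan is to dispose of the two ``if'' directions quickly and then prove the two ``only if'' directions, deducing (b) from (a). For the reverse implication in (a): if $R$ is a field it has no nonzero proper ideals, so the condition is vacuous; and if $R \cong F_1 \times F_2$, the only nonzero proper ideals are $F_1 \times \{0\}$ and $\{0\} \times F_2$, each of which is maximal, hence prime, hence an sdf-absorbing ideal. For the reverse implication in (b): the only proper ideal of a field is $\{0\}$, which is prime and therefore an sdf-absorbing ideal.

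For the forward implication in (a), assume every nonzero proper ideal of $R$ is an sdf-absorbing ideal and that $R$ is not a field. Since $R$ is reduced, Theorem~\ref{t20} gives that $R \cong R/nil(R)$ is von Neumann regular; in particular dim$(R) = 0$, so every prime ideal is maximal. Since $2 \in U(R)$, Theorem~\ref{t3} shows every nonzero proper ideal of $R$ is prime, hence maximal. Now I claim $R$ has at most two maximal ideals: if $M_1 \neq M_2$ are maximal ideals then they are comaximal, so $M_1 M_2 = M_1 \cap M_2$; if this intersection were nonzero it would be a nonzero proper ideal, hence prime, and then $M_1 M_2 \subseteq M_1 \cap M_2$ would force $M_1 \subseteq M_1 \cap M_2$ or $M_2 \subseteq M_1 \cap M_2$, contradicting the maximality of $M_1$ and $M_2$; hence $M_1 \cap M_2 = \{0\}$, and the Chinese Remainder Theorem gives $R \cong R/M_1 \times R/M_2$, a ring with exactly two maximal ideals, so no third maximal ideal can exist. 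As $R$ is not a field it is not quasilocal (a reduced zero-dimensional quasilocal ring is a field), so it has exactly two maximal ideals $M_1, M_2$, and the displayed isomorphism exhibits $R$ as $F_1 \times F_2$ with $F_i = R/M_i$.

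For the forward implication in (b), suppose every proper ideal of $R$ is an sdf-absorbing ideal. Then in particular every nonzero proper ideal is, so by part (a) either $R$ is a field or $R \cong F_1 \times F_2$. In the latter case $2 \in U(R)$ forces char$(F_i) \neq 2$, so $a = (1,1)$ and $b = (1,-1)$ are nonzero elements with $a^2 - b^2 = (0,0) \in \{0\}$ while $a + b = (2,0) \neq (0,0)$ and $a - b = (0,2) \neq (0,0)$; thus $\{0\}$ is not an sdf-absorbing ideal of $R$, a contradiction. Hence $R$ is a field.

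I expect the main obstacle to be the maximal-ideal count in the forward direction of (a): the crux is to extract, from the hypothesis that \emph{every} nonzero proper ideal is prime together with zero-dimensionality, the rigidity statement that distinct maximal ideals must intersect in $\{0\}$; once that is in hand, the Chinese Remainder Theorem collapses the structure of $R$ to a field or a product of two fields almost immediately.
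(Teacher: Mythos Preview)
Your proof is correct and follows essentially the same approach as the paper: both use Theorem~\ref{t20} to get $\dim(R)=0$, Theorem~\ref{t3} to make nonzero proper ideals prime, and then a maximal-ideal count followed by the Chinese Remainder Theorem. The only cosmetic differences are that you argue directly that any two distinct maximal ideals intersect in $\{0\}$ (while the paper assumes three maximal ideals and derives a contradiction first), and your counterexample for (b) uses $a=(1,1),\,b=(1,-1)$ in place of the paper's $a=(2,-2),\,b=(2,2)$.
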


\begin{proof}
{\rm (a)}  If $R$ is a field, then the claim is clear. So assume that $R$ is isomorphic to $F_1 \times F_2$ for fields $F_1, F_2$. Then $R$ has exactly two nonzero proper ideals and each is a maximal ideal of $R$. Thus every nonzero proper ideal of $R$ is an sdf-absorbing ideal of $R$.

Conversely, assume that $R$ is not a field and every nonzero proper ideal of $R$ is an sdf-absorbing ideal of $R$.   Suppose that $R$ has at least three distinct maximal ideals, say $M_1, M_2, M_3$. Then $M_1 \cap M_2 \neq  \{0\}$ (if $M_1 \cap M_2 = \{0\}$, then $M_1 \subseteq M_3$ or $M_2 \subseteq M_3$). Thus $M_1\cap M_2$ is a nonzero sdf-absorbing ideal of $R$ and $2 \in U(R)$; so $M_1 \cap M_2$ is a prime ideal of $R$ by Theorem \ref{t3}, a contradiction since dim$(R) =0$ by Theorem~\ref{t20}. Hence $R$ has exactly two maximal ideals, say $M_1$ and $M_2$. Then, arguing as above, $J(R) = M_1 \cap M_2 = \{0\}$; so $R$ is isomorphic to $F_1 \times F_2$ for fields $F_1 \cong R/M_1, F_2 \cong R/M_2$ by the Chinese Remainder Theorem.

{\rm (b)}  By part (a) above, we need only show that $I = \{(0, 0)\}$ is not an sdf-absorbing ideal of $R = F_1 \times F_2$ when $2 \in U(R)$. Let $a = (2, -2), b = (2, 2) \in F_1 \times F_2$. Then $a^2 - b^2 = (0, 0) \in I$, but $a + b = (4, 0) \not \in I$ and $a - b = (0, -4) \not \in I$ since char$(F_1)$, char$(F_2) \neq  2$ as $2 \in U(R)$. Thus $I = \{(0, 0)\}$ is not an sdf-absorbing ideal of $F_1 \times F_2$.
\end{proof}

Since every proper ideal of a commutative von Neumann regular ring is a radical ideal, Theorem \ref{t2} yields the following result which generalizes the fact that every ideal of a boolean ring is an sdf-absorbing ideal (Example~\ref{e1}(b)). Note that $R = \mathbb{F}_4 \times \mathbb{F}_4$ is a von Neumann regular ring with char$(R) = 2$, but $R$ is not a boolean ring.

\begin{thm}\label{t85}
Let $R$ be a commutative von Neumann regular ring with char($R) = 2$. Then every proper ideal of $R$ is an sdf-absorbing ideal of $R$.
\end{thm}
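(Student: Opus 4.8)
The plan is to deduce this immediately from the two structural facts already in hand, so the argument is short. First I would recall the characterization used at the start of this section: every proper ideal of a commutative ring $R$ is a radical ideal if and only if $R$ is von Neumann regular (cf. \cite[Proposition 1.1]{AB1}), which is also consistent with the fact that von Neumann regular rings are exactly the reduced rings of Krull dimension $0$. Hence, given an arbitrary proper ideal $I$ of $R$, the hypothesis that $R$ is von Neumann regular yields at once that $I$ is a radical ideal of $R$.

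Next I would invoke Theorem~\ref{t2}: since char$(R) = 2$ and $I$ is a radical ideal, Theorem~\ref{t2} gives that $I$ is an sdf-absorbing ideal of $R$. Because Theorem~\ref{t2} imposes no ``nonzero'' restriction on the ideal, this also covers $I = \{0\}$, which is indeed a radical ideal here since a von Neumann regular ring is reduced. As $I$ was an arbitrary proper ideal of $R$, this proves the claim.

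There is essentially no obstacle to overcome: the statement is just the composition of Theorem~\ref{t2} with the cited equivalence for von Neumann regular rings. The only point requiring a moment's care is bookkeeping about the word ``proper'': sdf-absorbing ideals are by definition proper, and we are asserting the conclusion only for proper $I$, so Theorem~\ref{t2} applies verbatim. Finally, as the paragraph preceding the theorem indicates, I would note that this generalizes Example~\ref{e1}(b) — every ideal of a boolean ring is sdf-absorbing — and that the generalization is strict, since $R = \mathbb{F}_4 \times \mathbb{F}_4$ is von Neumann regular with char$(R) = 2$ but is not boolean.
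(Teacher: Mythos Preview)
Your proof is correct and follows exactly the paper's approach: the paper simply observes that every proper ideal of a von Neumann regular ring is a radical ideal and then invokes Theorem~\ref{t2}, which is precisely what you do.
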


We next handle the case when $2$ is a nonzero zero-divisor of $R$.

\begin{thm}\label{t101}
Let $R$ be a commutative von Neumann regular ring with $ 0 \neq 2 \in Z(R)$. Then the following statements are equivalent.
	
{\rm (a)} Every proper ideal of $R$ is an sdf-absorbing ideal of $R$.

{\rm (b)}  Every nonzero proper ideal of $R$ is an sdf-absorbing ideal of $R$.

{\rm (c)} Exactly one maximal ideal $M$ of $R$ has char$(R/M) \neq 2$.
\end{thm}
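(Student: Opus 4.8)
The plan is to establish the cycle $(c)\Rightarrow(a)\Rightarrow(b)\Rightarrow(c)$, since $(a)\Rightarrow(b)$ is trivial. The main structural input is that $R$ von Neumann regular means $R$ is reduced and zero-dimensional, so by the theory of such rings every finitely generated ideal is generated by an idempotent and, locally, $R_M$ is a field for each maximal ideal $M$; moreover $2\in Z(R)$ says $2$ lies in some but not all maximal ideals, and the set $V(2R)=\{M : 2\in M\}=\{M : \mathrm{char}(R/M)=2\}$ is nonempty and proper.

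For $(c)\Rightarrow(a)$, suppose $M_0$ is the unique maximal ideal with $\mathrm{char}(R/M_0)\neq 2$. Let $I$ be any proper ideal and suppose $ab\in I$ with $a,b\notin I$; by Theorem~\ref{t4} I must rule out a nonzero solution of $X+Y=a$, $X-Y=b$. The key observation is that for every maximal ideal $M\neq M_0$ we have $2\in M$, so $\mathrm{char}(R/M)=2$, hence $a+b\equiv a-b\pmod M$ for all $a,b$; combined with the fact that the intersection of all maximal ideals of a von Neumann regular ring is $\{0\}$, this forces any candidate solution $(x,y)$ to satisfy $x\equiv y\pmod M$ for every $M\neq M_0$, i.e. $x-y$ lies in every maximal ideal except possibly $M_0$. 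I then argue that if $x+y=a\notin I$ and $x-y=b\notin I$ but $x,y\neq 0$, one derives a contradiction by passing to the localization $R_{M_0}$ (a field, where $2$ is a unit, so the linear system has a *unique* solution which one shows must be zero) and patching with the behavior at the other maximal ideals. Concretely: $x-y$ is a unit times an idempotent supported on the single point $M_0$, and similarly $x+y$; working idempotent-by-idempotent reduces to the field case $R_{M_0}$, exactly as in the proof of Theorem~\ref{t3}, where $2\in U$ forces $x$ or $y$ into $I$.

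For $(b)\Rightarrow(c)$: assume every nonzero proper ideal is sdf-absorbing. Since $0\neq 2\in Z(R)$, the set $\{M : \mathrm{char}(R/M)\neq 2\}$ is nonempty (as $2$ is a zero-divisor, not a unit in some sense — more carefully, $2\notin\bigcap M = \{0\}$ forces $2$ outside at least one maximal ideal) and it is proper (as $2\in Z(R)$ lies in at least one maximal ideal). Suppose for contradiction there are two distinct maximal ideals $M_1,M_2$ with $\mathrm{char}(R/M_i)\neq 2$. Since $R$ is von Neumann regular with $\dim R=0$, I produce idempotents separating $M_1,M_2$ from each other and, if needed, from some $M_3$ with $2\in M_3$, and build an ideal $I$ together with elements $a,b\notin I$ with $ab\in I$ admitting a nonzero solution to $X+Y=a$, $X-Y=b$ — mimicking Example~\ref{e10}(c) and the proof of Theorem~\ref{t5}(a), where the obstruction came precisely from a product of two factors of characteristic $\neq 2$. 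One natural choice: take $e_1,e_2$ orthogonal idempotents with $M_i\in V(1-e_i)$, set $a = 2e_1 - 2e_2 + (\text{something supporting the complement})$ and $b=2e_1+2e_2+(\cdots)$ so that $ab\in I:=(e_1e_2\text{-part})$, exactly as $(2,-2),(2,2)$ worked in Theorem~\ref{t5}(b).

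The hard part will be the patching/localization bookkeeping in $(c)\Rightarrow(a)$: von Neumann regular rings need not be finite products of fields, so I cannot literally use the Chinese Remainder Theorem as in Theorem~\ref{t5}; instead I must argue "one maximal ideal at a time" using that an element is zero iff it vanishes in every localization $R_M$, and that the relevant ideals/elements decompose along the idempotent supported at $M_0$. Making precise that a nonzero solution $(x,y)$ would have to be "concentrated at $M_0$" and then invoking the field argument there (where $2$ is invertible) is the crux; once that is set up, the reduction to Theorem~\ref{t3}'s computation $a=(x+y)/2$, $b=(x-y)/2$ finishes it.
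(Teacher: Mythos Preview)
Your overall cycle $(c)\Rightarrow(a)\Rightarrow(b)\Rightarrow(c)$ and your plan for $(b)\Rightarrow(c)$ are on the right track and essentially match the paper's: the paper takes $I=M_1\cap M_2$, observes $I\neq\{0\}$ (else $R\cong F_1\times F_2$ with $2$ a unit, contradicting $2\in Z(R)$), and then uses the Chinese Remainder Theorem and the $(2,-2),(2,2)$ computation from Theorem~\ref{t5}(b) in $R/I\cong F_1\times F_2$. Your idempotent version is a reasonable variant of the same idea.

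The genuine gap is in $(c)\Rightarrow(a)$. What you have written is a sketch, not a proof: you acknowledge that ``the hard part will be the patching/localization bookkeeping,'' and the steps you do outline are muddled (for instance, you write that the field argument at $M_0$ ``forces $x$ or $y$ into $I$,'' but the goal is to force $a=x+y$ or $b=x-y$ into $I$, and your claim that the solution must be ``concentrated at $M_0$'' is neither made precise nor justified). Routing through Theorem~\ref{t4} and idempotent decompositions can be made to work, but it is substantially more delicate than what you have sketched, and your proposal does not carry it out.

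More importantly, you are missing the simple idea that makes $(c)\Rightarrow(a)$ a two-line argument. Since $R$ is von Neumann regular, every proper ideal $I$ is radical, so $I=\bigcap_{N\supseteq I}N$. Given $a^2-b^2\in I$, look at each maximal $N\supseteq I$: if $N\neq M_0$ then $\mathrm{char}(R/N)=2$, so $(a+b)^2\equiv a^2-b^2\equiv 0\pmod N$ and hence \emph{both} $a+b$ and $a-b$ lie in $N$; if $N=M_0$ then primality gives $a+b\in M_0$ or $a-b\in M_0$. Thus one of $a+b$, $a-b$ lies in every maximal ideal containing $I$, hence in $I$. No localization, no idempotents, no Theorem~\ref{t4} are needed --- the whole point is that at the char-$2$ primes you get both signs for free, so the single ``choice'' at $M_0$ propagates to all of $I$.
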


\begin{proof}
$(a) \Rightarrow (b)$  This is clear.

$(b) \Rightarrow (c)$   First, assume that char$(R/M) = 2$ for every maximal ideal $M$ of $R$. Then $R$ is isomorphic to a subring of the direct product of fields of characteristic 2; so char$(R) = 2$, a contradiction.
Next, assume that $R$ has at least two maximal ideals $M_1, M_2$ with char$(R/M_1)$, char$(R/M_2) \neq 2$. Let $I = M_1 \cap M_2$. Then $I \ne \{0\}$ since otherwise $R$ is isomorphic to the direct product of two fields, each with characteristic $\neq 2$, by the Chinese Remainder Theorem.  Thus $I$ is an sdf-absorbing ideal of $R$, and hence $I/I = \{0\}$ is an sdf-absorbing ideal of $R/I$ by Corollary~\ref{c2}(b). However, $R/I$ is isomorphic to $F_1 \times F_2$ for fields $F_1 \cong R/M_1, F_2 \cong R/M_2$, where char$(F_1)$, char$(F_2) \neq 2$, by the Chinese Remainder Theorem, and thus $2 \in U(R/I)$. Hence $\{(0, 0)\}$ is not an sdf-absorbing ideal of $F_1 \times F_2$ by the proof of Theorem~\ref{t5}(b), a contradiction. Thus exactly one maximal ideal $M$ of $R$ has char$(R/M) \neq 2$.

$(c) \Rightarrow (a)$  Suppose that exactly one maximal ideal $M$ of $R$ has char$(R/M) \neq 2$. We need to show that every proper ideal $I$ of $R$ is an sdf-absorbing ideal of $R$.  Note that $I$ is a radical ideal of $R$ since $R$ is von Neumann regular. Thus it suffices to show that if $a^2 - b^2 \in I$, then either $a+b$ is in every prime (maximal) ideal of $R$ containing $I$ or $a-b$ is in every prime (maximal) ideal of $R$ containing $I$. Let $N$ be a maximal ideal of $R$ containing $I$; we consider two cases.

First, let $N = M$ be the unique maximal ideal $M$ of $R$ with char$(R/M) \neq 2$. If $I$ is contained in $M$, then as $a^2 - b^2\in M$, we have $a+b \in M$ or $a-b \in M$. Next, let $N \neq M$;
so char$(R/N)=2$. As $a^2 - b^2\in N$, both $a+b$ and $a-b$ are in $N$ by Theorem~\ref{l1}, and we select the sign convention to conform with the outcome of the previous case, if necessary.
So given the (radical) ideal $I$ of $R$, if $a^2-b^2\in I$, then either $a+b$ is in every prime (maximal) ideal of $R$ containing $I$ or $a-b$ is in every prime (maximal) ideal of $R$ containing $I$. Thus $a+b \in I$ or $a-b \in I$, and hence $I$ is an sdf-absorbing factor ideal of $R$.
\end{proof}

Together, Theorem~\ref{t5}, Theorem~\ref{t85}, and Theorem~\ref{t101} completely determine when every nonzero proper ideal or proper ideal of a commutative von Neumann regular ring is an sdf-absorbing ideal since every element in a commutative von Neumann regular ring is either a unit or a zero-divisor  (\cite[Corollary 2.4]{H}). We next apply these criteria to a commutative von Neumann regular ring which is the direct product of finitely many fields.

\begin{exa} \label{e100}
{\rm Let $R$ be the direct product of finitely many fields (so $R$ is von Neumann regular).

(a)  Every proper ideal of $R$ is an sdf-absorbing ideal of $R$ if and only if at most one of the fields has characteristic $\neq 2$.

(b)  Every nonzero proper ideal of $R$ is an sdf-absorbing ideal of $R$ if and only if at most one of the fields has characteristic $\neq 2$ or $R$ is the direct product of two fields.}
\end{exa}

\section{Additional results} \label{s4}

In this section, we give several more results about sdf-absorbing ideals in special classes of commutative rings. In particular, we determine the sdf-absorbing ideals in a PID and the direct product of two commutative rings, and study sdf-absorbing ideals in polynomial rings, idealizations, amalgamation rings, and $D + M$ constructions.

A nonzero sdf-absorbing ideal of a commutative ring $R$ is always a radical ideal of $R$ by Theorem~\ref{t1}; the following theorem gives a case where the converse holds.

\begin{thm} \label{t8}
Let $I$ be a proper ideal of a commutative ring $R$ such that $I = P_1\cap \cdots \cap P_n$ for distinct comaximal prime ideals $P_1, \ldots, P_n$ of $R$. Then $I$ is an sdf-absorbing ideal of $R$ if and only if at most one of the $P_i$'s has char$(R/P_i) \neq 2$.
\end{thm}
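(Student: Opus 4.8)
The plan is to reduce to the zero ideal by passing to $\bar{R} := R/I$, where the Chinese Remainder Theorem exposes the direct-product structure behind the characteristic dichotomy. First I would note that $I = P_1 \cap \cdots \cap P_n$, being an intersection of prime ideals, is a radical ideal of $R$, and that, the $P_i$ being pairwise comaximal, the Chinese Remainder Theorem gives a ring isomorphism $R/I \cong D_1 \times \cdots \times D_n$ with each $D_i := R/P_i$ an integral domain. Then I would establish a reduction lemma: for a radical ideal $I$ of $R$, $I$ is an sdf-absorbing ideal of $R$ if and only if $\{0\}$ is an sdf-absorbing ideal of $R/I$. The forward direction is Corollary~\ref{c2}(b) applied with $J = I$. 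For the converse, take $0 \neq a, b \in R$ with $a^2 - b^2 \in I$: if the images $\bar{a}, \bar{b}$ in $R/I$ are both nonzero, the hypothesis on $R/I$ applies directly; and if, say, $\bar{a} = 0$, then $a \in I$, so $b^2 = a^2 - (a^2 - b^2) \in I$, and radicality of $I$ forces $b \in I$, whence $a - b \in I$. So it remains only to decide when $\{0\}$ is an sdf-absorbing ideal of $D_1 \times \cdots \times D_n$.

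For the ``only if'' direction I would argue by contraposition: suppose two of the domains, say $D_1$ and $D_2$ after relabeling, have char$(D_i) \neq 2$. A domain of characteristic $\neq 2$ has characteristic $0$ or an odd prime, so $2$ and $4$ are nonzero in both $D_1$ and $D_2$. Setting $a = (2, -2, 0, \ldots, 0)$ and $b = (2, 2, 0, \ldots, 0)$ in $R/I$ gives $a, b \neq 0$, $a^2 - b^2 = 0$, $a + b = (4, 0, \ldots, 0) \neq 0$, and $a - b = (0, -4, 0, \ldots, 0) \neq 0$; thus $\{0\}$ is not an sdf-absorbing ideal of $R/I$. (This is the computation behind Theorem~\ref{t5}(b).)

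For the ``if'' direction, suppose at most one index --- call it $j$, when it exists --- has char$(D_j) \neq 2$. Let $\alpha = (\alpha_i)$ and $\beta = (\beta_i)$ be nonzero elements of $R/I$ with $\alpha^2 = \beta^2$, so $\alpha_i^2 = \beta_i^2$ in each $D_i$. If char$(D_i) = 2$, then $(\alpha_i + \beta_i)^2 = \alpha_i^2 - \beta_i^2 = 0$, and since $D_i$ is a domain, $\alpha_i = \beta_i = -\beta_i$; at the exceptional index, $D_j$ being a domain gives $\alpha_j = \beta_j$ or $\alpha_j = -\beta_j$. In the former case $\alpha = \beta$ and in the latter $\alpha = -\beta$, so $\alpha - \beta = 0$ or $\alpha + \beta = 0$. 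Hence $\{0\}$ is an sdf-absorbing ideal of $R/I$, and by the reduction lemma $I$ is an sdf-absorbing ideal of $R$.

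I expect the reduction lemma to be the only delicate point: the ``$a, b \neq 0$'' clause in the definition cannot be invoked for $R/I$ when an element dies in the quotient, and it is exactly radicality of $I$ that repairs this. The remaining content is a routine application of the Chinese Remainder Theorem together with the two short direct-product computations above, the first modeled on the proof of Theorem~\ref{t5}(b) and the second on the mechanism of Theorem~\ref{l1}.
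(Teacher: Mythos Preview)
Your proof is correct and follows essentially the same route as the paper: both directions reduce via the Chinese Remainder Theorem to the product $R/I \cong \prod R/P_i$, with the forward direction exhibiting an explicit counterexample in the product (the paper uses $a=(1,1,\ldots,1)$, $b=(-1,1,\ldots,1)$ rather than your $(2,\pm 2,0,\ldots,0)$, but the idea is identical) and the converse arguing coordinate-by-coordinate that the characteristic-$2$ factors force $\alpha_i=\beta_i=-\beta_i$ while the lone exceptional factor determines the global sign. Your explicit ``reduction lemma'' (for radical $I$, $I$ is sdf-absorbing iff $\{0\}$ is sdf-absorbing in $R/I$) is a clean formulation of what the paper handles implicitly via Corollary~\ref{c2}(b) and the prime-by-prime argument borrowed from Theorem~\ref{t101}; your treatment of the case $\bar a=0$ using radicality is exactly the missing ingredient that Corollary~\ref{c2}(c) cannot supply when $J=I$.
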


\begin{proof}  By way of contradiction, assume that $I$ is an sdf-absorbing ideal of $R$, $n \ge 2$, and char$(R/P_1)$, char$(R/P_2) \neq  2$. Then  $I/I = \{0\}$ is an sdf-absorbing ideal of $R/I$ by Corollary~\ref{c2}(b), and $R/I$ is isomorphic to  $T = R/P_1 \times \cdots \times R/P_n$ by the Chinese Remainder Theorem. Let $a = (1, 1,  \ldots, 1), b = (-1, 1, \dots, 1) \in T$. Then $a^2 - b^2 = (0, \ldots, 0)$, but $a + b = (0, 2, \ldots, 2) \not \in \{(0, \ldots , 0)\}$ and $a - b = (2, 0,\ldots, 0) \not \in \{(0,\ldots, 0)\}$. Thus $\{(0,\ldots, 0)\}$ is not an sdf-absorbing ideal of $T$, a contradiction. Hence at most one of the $P_i$'s has char$(R/P_i) \neq 2$.

The converse follows easily using a slight modification to the proof  of $(c) \Rightarrow (a)$ in Theorem~\ref{t101}. The details are left to the reader.
\end{proof}

\begin{remk} \label{r77}
{\rm
(a)  Theorem~\ref{t8} gives criteria for $nil(R)$  to be an sdf-absorbing ideal in a zero-dimensional semilocal commutative ring.

 (b)  In the proof of Theorem~\ref{t8}, note that if $I =  I_1\cap \cdots \cap I_n$ is an sdf-absorbing ideal of $R$ for distinct comaximal proper ideals $I_1, \ldots, I_n$ of $R$, then at most one of the $I_i$'s has char$(R/I_i) \neq 2$.}
\end{remk}

Using Theorem~\ref{t8}, we have the following characterization of sdf-absorbing ideals in a PID which extends Example~\ref{e1}(a) (also, cf. Example~\ref{e1}(f), Theorem~\ref{t2}, and Theorem~\ref{t3}).

\begin{cor}\label{c5}
Let $R$ be a PID and $I$ a nonzero proper ideal of $R$.

{\rm (a)} If $2$ is a nonzero nonunit of $R$, then $I$ is an sdf-absorbing ideal of $R$ if and only if $I$ is a prime {\rm (}maximal{\rm )} ideal of $R$,  i.e.,  $I = aR$ for $a\in R$ prime,  or $I = aR$, where $a = a_1 \cdots a_na_{n+1}$ for nonassociate primes $a_1, \ldots, a_{n + 1} \in R$ such that  $a_i \mid 2$ in $R$ for every $1 \leq i \leq n$.  In particular, if $2 \in R$ is prime, then $I$ is an sdf-absorbing ideal of $R$ if and only if $I$ is a prime {\rm (}maximal{\rm )} ideal of $R$ or $I = 2pR$ for  $p \in R$ prime  not associate to $2$.

{\rm (b)}  If $2 \in U(R)$, then $I$ is an sdf-absorbing ideal of $R$ if and only if $I$ is a prime {\rm (}maximal{\rm )} ideal of $R$, i.e.,  $I = aR$ for $a\in R$ prime.

{\rm (c)} If char$(R) = 2$, then $I$ is an sdf-absorbing ideal of $R$ if and only if $I$ is a radical ideal of $R$, i.e.,  $I = a_1 \cdots a_nR$ for nonassociate primes $a_1, \ldots, a_n \in R$.
\end{cor}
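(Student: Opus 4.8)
The plan is to derive all three parts from Theorem~\ref{t8} together with the structure theory of ideals in a PID. In a PID $R$, every nonzero proper ideal $I$ is of the form $I = aR$ with $a = a_1^{e_1}\cdots a_k^{e_k}$ for nonassociate primes $a_i$ and $e_i \geq 1$; then $I$ is a radical ideal exactly when all $e_i = 1$, in which case $I = P_1 \cap \cdots \cap P_k$ with $P_i = a_iR$, and these primes are pairwise comaximal since distinct maximal ideals of a PID are comaximal. Conversely, if $I$ is a nonzero sdf-absorbing ideal, then $I$ is a radical ideal by Theorem~\ref{t1}, so this decomposition applies. Thus Theorem~\ref{t8} tells us that a nonzero sdf-absorbing ideal $I = P_1 \cap \cdots \cap P_k$ is characterized among radical ideals by the condition that at most one of the $R/P_i$ has characteristic $\neq 2$.

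For part (c), when $\mathrm{char}(R) = 2$: by Theorem~\ref{t2} every radical ideal is sdf-absorbing, and by Theorem~\ref{t1} every nonzero sdf-absorbing ideal is radical, so the two classes coincide; the explicit description is just the squarefree factorization above. For part (b), when $2 \in U(R)$: a nonzero sdf-absorbing ideal is prime by Theorem~\ref{t3}, and conversely every prime (hence maximal, since $\dim R \le 1$ and $I \neq 0$) ideal is sdf-absorbing; alternatively, in the decomposition $I = P_1 \cap \cdots \cap P_k$, each $R/P_i$ is a field of characteristic $\neq 2$ since $2$ is a unit, so Theorem~\ref{t8} forces $k = 1$.

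The substantive case is part (a), where $2$ is a nonzero nonunit. Here I would argue as follows. If $I = aR$ is sdf-absorbing, it is radical, so $a = a_1 \cdots a_m$ for nonassociate primes $a_i$, and $I = a_1R \cap \cdots \cap a_mR$; by Theorem~\ref{t8}, at most one $a_iR$ has $\mathrm{char}(R/a_iR) \neq 2$. Now $\mathrm{char}(R/a_iR) = 2$ means $2 \in a_iR$, i.e. $a_i \mid 2$ in $R$. So the condition is: all but at most one of the $a_i$ divide $2$. Relabelling so that $a_1,\dots,a_n \mid 2$ and $a_{n+1}$ is the possible exceptional prime, we get exactly the stated form $I = aR$ with $a = a_1\cdots a_n a_{n+1}$, $a_i \mid 2$ for $i \le n$ (the case $n = m$, i.e. all primes dividing $2$, being subsumed by allowing the list $a_1,\dots,a_{n+1}$ with $a_{n+1}$ itself dividing $2$, or by the prime case $I = aR$, $a$ prime). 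Conversely, any ideal of this form is radical with at most one prime factor not dividing $2$, so it is sdf-absorbing by Theorem~\ref{t8}. For the "in particular" clause, if $2$ is itself prime then the only primes dividing $2$ are the associates of $2$, so $n \le 1$ and $a = 2^{\epsilon} p$ with $\epsilon \in \{0,1\}$ and $p$ prime (or $a$ prime when $\epsilon = 0$), giving $I = pR$ prime or $I = 2pR$ with $p$ not associate to $2$. I expect the only mild subtlety to be bookkeeping in part (a): making the indexing of the "exceptional" prime $a_{n+1}$ match both the general statement and the clean $2$-prime special case, and checking that "at most one prime not dividing $2$" is correctly unpacked as "$a_i \mid 2$ for every $1 \le i \le n$" with the single leftover factor $a_{n+1}$ — no hard mathematics, just care with the phrasing.
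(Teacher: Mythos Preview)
Your proposal is correct and follows essentially the same route as the paper: reduce to a radical ideal via Theorem~\ref{t1}, write it as an intersection of distinct (comaximal) maximal ideals using the PID structure, and then invoke Theorem~\ref{t8}, translating ``$\mathrm{char}(R/a_iR)=2$'' into ``$a_i\mid 2$''; parts (b) and (c) are handled exactly as you do, via Theorems~\ref{t3} and \ref{t1}--\ref{t2} respectively. Your write-up is in fact more explicit than the paper's about the bookkeeping in part~(a), but there is no difference in strategy.
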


\begin{proof}
{\rm (a)} Assume that $I$ is a nonprime (and thus nonzero) sdf-absorbing ideal of the PID $R$. Then $I$ is a radical ideal of $R$ by Theorem~\ref{t1}, and hence $I$ is the intersection (product) of a finite number of distinct nonzero principal prime (maximal) ideals of $R$. Applying Theorem~\ref{t8}, we have $I = aR$, where $a = a_1\cdots a_na_{n+1}$ for nonassociate prime elements $a_1, \ldots , a_{n + 1} \in R$ such that  $a_i \mid 2$ (in $R$) for every $1 \leq i \leq n$.

The converse is clear by Theorem~\ref{t8}. The ``in particular'' statement is also clear.

{\rm (b)} This follows from Theorem~\ref{t3}.

{\rm (c)} This follows from Theorem~\ref{t1} and Theorem~\ref{t2}.
\end{proof}

We have the following example of a PID $R$ such that $2pR$ is not an sdf-absorbing ideal of $R$ for any prime  $p \in R$.

\begin{exa} \label{e80}
{\rm Let $R = \mathbb{Z}[i]$. Then $R$ is a PID and $2 = -i(1 + i)^2$, where $1 + i$ is prime and $i$ is a unit of $R$. Thus $2$ is not a prime element of $R$ and $2pR$ is not a radical ideal of $R$ for any prime $p \in R$; so $I = 2pR$ is not an sdf-absorbing ideal of $R$ for any prime $p \in R$ by Theorem~\ref{t1}. However, by Corollary~\ref{c5}(a), a nonprime ideal $I$ of $R$ is an sdf-absorbing ideal of $R$ if and only if $I = (1 +i)pR$ for some prime $p \in R$ not associate to $1 + i$.}
\end{exa}

We next consider when the two ideals $I[X]$ and $(I, X)$ are sdf-absorbing ideals of $R[X]$. The following partial result for $I[X]$ is a consequence of Theorem~\ref{t8}.

\begin{thm}\label{t205}
Let $I$ be a proper ideal of a commutative ring $R$ such that $I = P_1\cap \cdots \cap P_n$ for distinct comaximal prime ideals $P_1, \ldots, P_n$ of $R$. Then $I[X]$ is an sdf-absorbing ideal of $R[X]$ if and only if $I$ is an sdf-absorbing ideal of $R$.
\end{thm}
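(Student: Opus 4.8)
The plan is to reduce both sides to the combinatorial criterion of Theorem~\ref{t8}, applied once in $R$ and once in $R[X]$. First I would record the decomposition $I[X] = P_1[X] \cap \cdots \cap P_n[X]$ in $R[X]$: a polynomial lies in $I[X]$ exactly when each of its coefficients lies in $I = P_1 \cap \cdots \cap P_n$, i.e., exactly when it lies in every $P_i[X]$. Next, each $P_i[X]$ is a prime ideal of $R[X]$, since $R[X]/P_i[X] \cong (R/P_i)[X]$ is an integral domain; the $P_i[X]$ are pairwise distinct (a constant in $P_i \setminus P_j$ witnesses this), and they are pairwise comaximal because $P_i[X] + P_j[X] \supseteq P_i + P_j = R$ for $i \neq j$, so $P_i[X] + P_j[X] = R[X]$. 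Hence $I[X]$ is written as an intersection of $n$ distinct comaximal prime ideals of $R[X]$, which is precisely the hypothesis of Theorem~\ref{t8}.

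Then I would compute the relevant characteristics. From $R[X]/P_i[X] \cong (R/P_i)[X]$ and the fact that a polynomial ring over a commutative ring $A$ has the same characteristic as $A$, we get char$(R[X]/P_i[X]) = $ char$(R/P_i)$ for each $i$. Applying Theorem~\ref{t8} to the decomposition of $I[X]$ in $R[X]$, the ideal $I[X]$ is an sdf-absorbing ideal of $R[X]$ if and only if at most one index $i$ satisfies char$(R[X]/P_i[X]) \neq 2$, equivalently at most one $i$ satisfies char$(R/P_i) \neq 2$. By Theorem~\ref{t8} applied to $I = P_1 \cap \cdots \cap P_n$ in $R$, this last condition holds if and only if $I$ is an sdf-absorbing ideal of $R$. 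Chaining the two equivalences yields the theorem.

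I expect no serious obstacle here: the whole argument is a transfer of the criterion in Theorem~\ref{t8} along the extension $R \subseteq R[X]$. The only points deserving explicit verification are that $P_i[X]$ is prime and that the $P_i[X]$ stay distinct and comaximal in $R[X]$, together with the characteristic computation $\mathrm{char}((R/P_i)[X]) = \mathrm{char}(R/P_i)$ — all routine. (It is also worth noting in passing that the hypothesis already forces $I$, and hence $I[X]$, to be radical, consistent with Theorem~\ref{t1}.)
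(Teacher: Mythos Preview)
Your proposal is correct and follows essentially the same route as the paper: both arguments rest on the decomposition $I[X] = P_1[X]\cap\cdots\cap P_n[X]$ into distinct comaximal primes of $R[X]$, the isomorphism $R[X]/P_i[X]\cong (R/P_i)[X]$ preserving characteristic, and an application of Theorem~\ref{t8}. The only cosmetic difference is that the paper handles the forward implication by the immediate contraction argument $I = I[X]\cap R$ (cf.\ Corollary~\ref{c2}(a)) rather than invoking Theorem~\ref{t8} symmetrically, but your version is equally valid and in fact yields Corollary~\ref{c309} in one stroke.
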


\begin{proof}
If $I[X]$ is an sdf-absorbing ideal of $R[X]$, then it is easily verified that $I$ is an sdf-absorbing ideal of $R$.

Conversely, assume that $I$ is an sdf-absorbing ideal of $R$. Then $I[X] = P_1[X]\cap \cdots \cap P_n[X]$, where  $P_1[X], \ldots , P_n[X]$ are distinct comaximal prime ideals of $R[X]$ since $P_1, \ldots, P_n$ are distinct comaximal prime ideals of $R$.  Moreover, at most one of the $P_i$'s has char$(R/P_i) \neq 2$ by Theorem~\ref{t8}. Since $R[X]/P_i[X]$ is isomorphic to $R/P_i[X]$ for every $1\leq i \leq n$, at most one of the $P_i[X]$'s has char$(R[X]/P_i[X]) \neq 2$. Thus $I[X]$ is an sdf-absorbing ideal of $R[X]$ by Theorem \ref{t8}.
\end{proof}

Combining Theorem \ref{t8} and Theorem \ref{t205}, we have the following corollary.

\begin{cor}\label{c309}
	Let $I$ be a proper ideal of a commutative ring $R$ such that $I = P_1\cap \cdots \cap P_n$ for distinct comaximal prime ideals $P_1, \ldots, P_n$ of $R$.  Then the following statements are equivalent.
	
{\rm (a)} $I$ is an sdf-absorbing ideal of $R$.

{\rm (b)} $I[X]$ is an sdf-absorbing ideal of $R[X]$.

{\rm (c)} At most one of the $P_i$'s has char$(R/P_i) \neq 2$.
\end{cor}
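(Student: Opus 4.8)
The plan is to exploit that Corollary~\ref{c309} is essentially a repackaging of the two results just proved, so the proof amounts to chaining them together. First I would note that the equivalence $(a) \Leftrightarrow (c)$ is precisely the content of Theorem~\ref{t8}, applied to the given decomposition $I = P_1 \cap \cdots \cap P_n$ into distinct comaximal prime ideals. Nothing new needs to be verified there.

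Next I would establish $(b) \Leftrightarrow (c)$. The idea is to apply Theorem~\ref{t8} a second time, now in the polynomial ring $R[X]$, to the ideal $I[X]$. For this I need $I[X] = P_1[X] \cap \cdots \cap P_n[X]$ with the $P_i[X]$ distinct comaximal prime ideals of $R[X]$: the primality of each $P_i[X]$ follows from $R[X]/P_i[X] \cong (R/P_i)[X]$ being a domain, distinctness is clear, and comaximality of $P_i[X]$ and $P_j[X]$ follows from comaximality of $P_i$ and $P_j$ in $R$ (if $p_i + p_j = 1$ in $R$, the same equation witnesses comaximality in $R[X]$). Then Theorem~\ref{t8} says $I[X]$ is an sdf-absorbing ideal of $R[X]$ iff at most one of the $P_i[X]$ has $\operatorname{char}(R[X]/P_i[X]) \neq 2$. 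The final observation is that $\operatorname{char}(R[X]/P_i[X]) = \operatorname{char}((R/P_i)[X]) = \operatorname{char}(R/P_i)$, so this condition is literally $(c)$. Alternatively, and more economically, $(a) \Leftrightarrow (b)$ is exactly Theorem~\ref{t205}, so one could also just invoke that directly and cite Theorem~\ref{t8} only for $(a) \Leftrightarrow (c)$.

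Since both the heavy lifting (the combinatorial argument with the tuple $a = (1,1,\ldots,1)$, $b = (-1,1,\ldots,1)$ in Theorem~\ref{t8}, and the comaximal-decomposition bookkeeping in Theorem~\ref{t205}) has already been done, I expect there to be no genuine obstacle here; the ``main obstacle'' is really just making sure the hypothesis of a decomposition into \emph{distinct comaximal primes} is correctly transported to $R[X]$, which as noted above is routine. So the proof I would write is short: deduce $(a) \Leftrightarrow (c)$ from Theorem~\ref{t8}, deduce $(a) \Leftrightarrow (b)$ from Theorem~\ref{t205}, and conclude. I would phrase it roughly as follows.

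\begin{proof}
The equivalence $(a) \Leftrightarrow (c)$ is immediate from Theorem~\ref{t8} applied to the decomposition $I = P_1 \cap \cdots \cap P_n$, and the equivalence $(a) \Leftrightarrow (b)$ is Theorem~\ref{t205}.
\end{proof}
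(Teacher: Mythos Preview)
Your proposal is correct and matches the paper's approach exactly: the paper states the corollary with no separate proof, prefacing it only with ``Combining Theorem~\ref{t8} and Theorem~\ref{t205}, we have the following corollary.'' Your one-line proof citing Theorem~\ref{t8} for $(a)\Leftrightarrow(c)$ and Theorem~\ref{t205} for $(a)\Leftrightarrow(b)$ is precisely what is intended.
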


\begin{remk} \label{r111}
{\rm Note that $\{0\}$ may be an sdf-absorbing ideal in $R$, but not an sdf-absorbing ideal in $R[X]$ (so, in this case, $\{0\}$ is not the intersection of finitely many distinct comaximal prime ideals of $R$). For example, let $R = \mathbb{Z}_4$.}
\end{remk}

The sdf-absorbing ideals $(I, X)$ in $R[X]$ are easily classified
		
\begin{thm} \label{t569}
Let $R$ be a commutative ring.

{\rm (a)} Let $I$ be a nonzero proper ideal of $R$. Then $(I, X)$ is an sdf-absorbing ideal of $R[X]$ if and only if $I$ is an sdf-absorbing ideal of $R$.

{\rm (b)} $(X)$ is an sdf-absorbing ideal of $R[X]$ if and only if $R$ is reduced and $\{0\}$ is an sdf-absorbing ideal of $R$.
\end{thm}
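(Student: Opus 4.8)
The plan is to work throughout with the identification $(I,X)=\{\,f\in R[X] : f(0)\in I\,\}$ (and its special case $(X)=\{\,f\in R[X]:f(0)=0\,\}$), together with the observation that $(I,X)$ and $(X)$ are \emph{nonzero} ideals of $R[X]$, so by Remark~\ref{e0}(b) the ``$a,b\neq 0$'' clause may be dropped when testing whether they are sdf-absorbing. Note also $R[X]/(X)\cong R$, and that the constant term of $p^2-q^2$ is $p(0)^2-q(0)^2$.

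For (a), forward direction: the evaluation map $\varepsilon:R[X]\to R$, $f\mapsto f(0)$, is surjective with $\ker(\varepsilon)=(X)\subseteq(I,X)$ and $\varepsilon((I,X))=I$; since $(I,X)$ is an sdf-absorbing ideal of $R[X]$ containing $\ker(\varepsilon)$, Theorem~\ref{t7}(c) gives that $I$ is an sdf-absorbing ideal of $R$. For the converse, assume $I$ is sdf-absorbing in $R$ and let $p^2-q^2\in(I,X)$ with $p,q\in R[X]$; putting $p_0=p(0)$, $q_0=q(0)$ we get $p_0^2-q_0^2\in I$, and since $I$ is nonzero, Remark~\ref{e0}(b) yields $p_0+q_0\in I$ or $p_0-q_0\in I$, hence $p+q\in(I,X)$ or $p-q\in(I,X)$. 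Thus $(I,X)$ is sdf-absorbing.

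For (b), the argument is parallel but must accommodate the fact that $\{0\}$ need not be radical in $R$. If $(X)$ is sdf-absorbing in $R[X]$, then being nonzero it is a radical ideal by Theorem~\ref{t1}, so $R[X]/(X)\cong R$ is reduced; and $\{0\}=(X)/(X)$ is an sdf-absorbing ideal of $R[X]/(X)\cong R$ by Corollary~\ref{c2}(b). Conversely, suppose $R$ is reduced and $\{0\}$ is sdf-absorbing in $R$, and let $p^2-q^2\in(X)$, i.e.\ $p_0^2=q_0^2$ with $p_0=p(0)$, $q_0=q(0)$. If both $p_0,q_0$ are nonzero, apply the sdf-absorbing property of $\{0\}$ directly to obtain $p_0+q_0=0$ or $p_0-q_0=0$; if instead $p_0=0$ (the case $q_0=0$ is symmetric), then $q_0^2=0$, so $q_0=0$ by reducedness and $p_0+q_0=p_0-q_0=0$. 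In every case $p+q\in(X)$ or $p-q\in(X)$, so $(X)$ is sdf-absorbing in $R[X]$.

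The only genuine obstacle is the final case analysis in the converse of (b): because $\{0\}$ is the zero ideal, its sdf-absorbing property is available only for nonzero inputs, so the possibility of a vanishing constant term must be handled separately, and it is precisely there that the ``$R$ reduced'' hypothesis is indispensable. (Without it the statement fails: for $R=\mathbb{Z}_4$ one has $(2+X)^2-X^2=0\in(X)$, yet neither $(2+X)+X$ nor $(2+X)-X$ lies in $(X)$.)
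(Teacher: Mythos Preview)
Your proof is correct and follows essentially the same approach as the paper's: both use the evaluation-at-zero map (equivalently, Theorem~\ref{t7}(c) or Corollary~\ref{c2}(b)) for the forward directions, and for the converses both reduce to the constant terms and invoke Remark~\ref{e0}(b) (for nonzero $I$) or the case split between nonzero and zero constant terms (for $I=\{0\}$), with reducedness handling the vanishing case in (b). Your explicit remark that $(I,X)$ and $(X)$ are nonzero ideals of $R[X]$, justifying the use of Remark~\ref{e0}(b) for them as well, is a nice clarification that the paper leaves implicit.
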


\begin{proof}
(a) If $(I, X)$ is an sdf-absorbing ideal of $R[X]$, then $I$ is an sdf-absorbing ideal of $R$ by  Theorem~\ref{t7}(c).

Conversely, assume that $I$ is a nonzero sdf-absorbing ideal of $R$. Let $f = a + Xm(X), g = b + Xn(X) \in R[X]$ with $f^2 - g^2  \in (I, X)$. Then $a^2 - b^2 \in I$; so $a + b \in I$ or $a - b \in I$ by Remark~\ref{e0}(b). Thus $f + g \in (I, X)$ or $f - g \in (I, X)$; so $(I, X)$ is an sdf-absorbing ideal of $R[X]$.

(b) If $(X)$ is an sdf-absorbing ideal of $R[X]$, then $\{0\}$ is an sdf-absorbing ideal of $R$ by  Theorem~\ref{t7}(c). Moreover, $(X)$ is a radical ideal of $R[X]$ by Theorem~\ref{t1}; so $R$ is also reduced.

Conversely, assume that $R$ is reduced and $\{0\}$ is an sdf-absorbing ideal of $R$. Let $f = a + Xm(X), g = b + Xn(X) \in R[X]$ with $f^2 - g^2 \in (X)$. Then $a^2 - b^2 = 0$. If $a, b \neq 0$, then $a + b = 0$ or $a - b = 0$ since $\{0\}$ is an sdf-absorbing ideal of $R$. If $a  = 0$ or $b = 0$, then $a = b = 0$ since $R$ is reduced. So in either case, $f + g \in (X)$ or $f - g \in (X)$. Thus $(X)$ is an sdf-absorbing ideal of $R[X]$.
\end{proof}

The next theorem is similar to Theorem~\ref{t8}. Note that Corollary~\ref{c5} is also a consequence of Theorem~\ref{t2024}  since in Corollary~\ref{c5}, we have $P_1 \cap \cdots \cap P_n = P_1\cdots P_n$.

\begin{thm}\label{t2024}
Let $I$ be a proper ideal of a commutative ring $R$ such that $I = P_1\cap \cdots \cap P_n$ for prime ideals $P_1, \ldots, P_n$ of $R$ and the intersection of any $n-1$ of the ideals $P_1, \ldots, P_n$ is not equal to $I$. Then $I$ is an sdf-absorbing ideal of $R$ if and only if at most one of the $P_i$'s has char$(R/P_i) \neq 2$.
\end{thm}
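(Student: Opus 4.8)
The plan is to handle the two implications separately, expecting the irredundancy hypothesis to matter only for the ``only if'' direction. One may assume $n \geq 2$: for $n = 1$ the ideal $I = P_1$ is prime, hence sdf-absorbing, and the condition ``at most one $P_i$ has char$(R/P_i)\neq 2$'' holds vacuously. So fix $n \geq 2$, write $Q = P_3 \cap \cdots \cap P_n$ with the convention $Q = R$ when $n = 2$, so $I = P_1 \cap P_2 \cap Q$, and recall that char$(R/P_i)\neq 2$ is equivalent to $2 \notin P_i$.

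For the ``if'' direction I would assume that at most one $P_i$, say only possibly $P_1$, has char$(R/P_i)\neq 2$, so $2 \in P_i$ for every $i \geq 2$. Given $0 \neq a, b \in R$ with $a^2 - b^2 = (a+b)(a-b) \in I$: for each $i \geq 2$, primality of $P_i$ gives $a+b \in P_i$ or $a-b \in P_i$, and since $2b = (a+b)-(a-b) \in P_i$ these are equivalent, so $a+b, a-b \in P_i$; thus $a+b, a-b \in P_2 \cap \cdots \cap P_n$. Then $(a+b)(a-b) \in P_1$ forces $a+b \in P_1$ or $a-b \in P_1$, and with the previous step this places $a+b$ or $a-b$ in $P_1 \cap (P_2 \cap \cdots \cap P_n) = I$. (This half uses only $I = P_1 \cap \cdots \cap P_n$, not irredundancy, and is analogous to the converse of Theorem~\ref{t8}.)

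For the ``only if'' direction I would argue by contraposition: suppose at least two of the $P_i$ have char$(R/P_i)\neq 2$, so after relabeling $2 \notin P_1$ and $2 \notin P_2$, and construct $0 \neq a, b \in R$ with $a^2 - b^2 \in I$ but $a+b, a-b \notin I$, contradicting that $I$ is sdf-absorbing. Irredundancy gives $\bigcap_{j \neq 1} P_j = P_2 \cap Q \neq I = P_1 \cap (P_2 \cap Q)$, so $P_2 \cap Q \not\subseteq P_1$, and symmetrically $P_1 \cap Q \not\subseteq P_2$; choose $x \in (P_2 \cap Q) \setminus P_1$ and $y \in (P_1 \cap Q) \setminus P_2$ and put $a = x+y$, $b = x-y$. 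Then $a, b \neq 0$, since $a = 0$ forces $x = -y$ and $b = 0$ forces $x = y$, each giving $x \in P_1$ (as $y \in P_1$), contradicting $x \notin P_1$. Moreover $a+b = 2x \notin P_1$ (as $P_1$ is prime with $2, x \notin P_1$) and $a-b = 2y \notin P_2$ (as $P_2$ is prime with $2, y \notin P_2$), so $a+b \notin I$ and $a-b \notin I$. Finally $a^2 - b^2 = 4xy$ lies in $P_1$ (since $2y \in P_1$), in $P_2$ (since $2x \in P_2$), and in every $P_j$ with $j \geq 3$ (since $x, y \in Q \subseteq P_j$), so $a^2 - b^2 \in I$, as required.

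I expect the construction in the ``only if'' direction to be the one real obstacle: without comaximality the Chinese Remainder reduction behind Theorem~\ref{t8} is unavailable, and since $2$ need not be a unit one cannot solve the system $X+Y = a$, $X-Y = b$ for given $a, b$. The way around it is to take $a = x+y$, $b = x-y$ directly and exploit irredundancy of the intersection together with $2 \notin P_1 \cup P_2$ to keep $2x$ out of $P_1$ and $2y$ out of $P_2$; all remaining verifications are routine membership checks in prime ideals.
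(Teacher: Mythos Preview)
Your proof is correct and follows essentially the same strategy as the paper: the ``if'' direction checks membership in each $P_i$ (using that $a+b$ and $a-b$ differ by an element of $P_i$ when $2\in P_i$), and the ``only if'' direction builds the counterexample as $a=x+y$, $b=x-y$ so that $a^2-b^2=4xy$.

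The one notable difference is in the choice of the second element. The paper picks $j\in(P_2\cap\cdots\cap P_n)\setminus P_1$ and $q\in P_1\setminus(P_2\cap\cdots\cap P_n)$, then asserts $2q\notin P_2\cap\cdots\cap P_n$ ``since $2\notin P_2$''; this tacitly needs $q\notin P_2$, which does not follow directly from $q\notin P_2\cap\cdots\cap P_n$. Your more symmetric choice $y\in(P_1\cap Q)\setminus P_2$ (using irredundancy at index~$2$) guarantees $y\notin P_2$ outright, so the step $2y\notin P_2$ is clean. In that sense your version is slightly tighter, but the underlying idea is the same.
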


\begin{proof}
Let $I$ be an sdf-absorbing ideal of $R$. By way of contradiction, assume that $n \ge 2$ and char$(R/P_1)$, char$(R/P_2) \neq  2$. Then $ I \subsetneq J = P_2 \cap \cdots \cap P_n$ by hypothesis; so  there is a $j \in J\setminus P_1$ and  $q \in P_1\setminus J$ (otherwise $I = P_1$). Let $x = j + q$ and $y = j - q$. Then  $x \neq 0$, $y \neq 0$, and $x^2 - y^2 = 4jq \in I$. Moreover,  $x + y = 2j \not \in P_1$ since $2 \not \in P_1$, and $x - y = 2q \not \in J$ since $2 \not \in P_2$. Thus $x + y \not \in I$ and  $x - y \not \in I$; so $I$ is not an sdf-absorbing ideal of $R$, a contradiction.  Hence at most one of the $P_i$'s has char$(R/P_i) \neq 2$.
	
The converse follows easily using a slight modification to the proof  of $(c) \Rightarrow (a)$ in Theorem~\ref{t101}. The details are left to the reader.
\end{proof}

In view of Theorem \ref{t2024}, we have the following example.

\begin{exa}
{\rm Let $R = \mathbb{Z}[X_1, \ldots, X_n]$ for $n \geq 2$. Then $I = (6, 2X_1, \ldots, 2X_n, X_1\cdots X_n) = (X_1, 3) \cap (X_2, 2) \cap \cdots \cap (X_n, 2)$ is an sdf-absorbing ideal of $R$ by Theorem~\ref{t2024}.}
\end{exa}

The next result completely determines when $\{0\}$ is an sdf-absorbing ideal of $\mathbb{Z}_n$.

\begin{thm} \label{t9}
$\{0\}$ is an sdf-absorbing ideal of $\mathbb{Z}_n$ if and only if  $n = 4, n = 9, n = p$ is prime, or $n = 2p$ for some odd prime $p$.
\end{thm}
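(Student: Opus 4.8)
The plan is to prove both implications, using throughout the tautological reformulation that $\{0\}$ \emph{fails} to be an sdf-absorbing ideal of $\mathbb{Z}_n$ precisely when there exist nonzero $a,b\in\mathbb{Z}_n$ with $a^2=b^2$ but $a+b\neq 0$ and $a-b\neq 0$. For the ``if'' direction: if $n=p$ is prime then $\mathbb{Z}_p$ is a field, so $\{0\}$ is prime and hence sdf-absorbing; for $n=4$ and $n=9$ one checks directly (as already noted in Remark~\ref{e0}(a) and just before Theorem~\ref{t3}) that every pair of nonzero $a,b$ with $a^2=b^2$ has $a-b=0$ or $a+b=0$, the point being that in these two small rings the would-be exceptional sums $a+b$ happen to vanish. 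For $n=2p$ with $p$ an odd prime, write $\mathbb{Z}_{2p}\cong\mathbb{Z}_2\times\mathbb{Z}_p$; if $(a_1,a_2)^2=(b_1,b_2)^2$ with both elements nonzero, then $a_1=b_1$ (squaring is the identity on $\mathbb{Z}_2$) and $a_2=\pm b_2$ (as $\mathbb{Z}_p$ is a domain), so either $a=b$ and $a-b=0$, or $a_2=-b_2$ and $a+b=(2a_1,0)=(0,0)$. (Alternatively, this follows from Example~\ref{e100}(a).)

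For the ``only if'' direction I would assume $\{0\}$ is sdf-absorbing in $\mathbb{Z}_n$ and reduce via the Chinese Remainder Theorem. First, if $n$ is divisible by $p^2$ for some prime $p$ and also by some other prime, write $n=p^em$ with $e=v_p(n)\geq 2$, $\gcd(p,m)=1$, $m>1$, so $\mathbb{Z}_n\cong\mathbb{Z}_{p^e}\times\mathbb{Z}_m$; putting $u=p^{e-1}\in\mathbb{Z}_{p^e}$ (so $u\neq 0$ but $u^2=0$), the elements $a=(u,1)$ and $b=(0,1)$ are nonzero with $a^2=b^2=(0,1)$, while $a\pm b$ both have nonzero first coordinate $u$ --- a contradiction. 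Hence $n$ is a prime power or squarefree. Next, if $n=p_1\cdots p_r$ is squarefree with $r\geq 2$ and $n\neq 2p$, then at least two of the $p_i$ are odd (at most one is $2$, and $n\neq 2p$ forces $r\geq 3$ when $n$ is even); choosing distinct indices $i,j$ with $p_i,p_j$ odd, set $a=(1,\dots,1)$ and let $b$ agree with $a$ except for a sign change in coordinate $i$. Then $a^2=b^2$, $a-b$ is $2$ in coordinate $i$ and $0$ elsewhere (nonzero, as $p_i$ is odd), and $a+b$ is $0$ in coordinate $i$ and $2$ elsewhere (nonzero, as $p_j$ is odd) --- again a contradiction. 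Thus a squarefree $n$ with at least two prime factors must be $2p$.

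It remains to treat $n=p^k$ with $k\geq 2$, where I claim $p^k\in\{4,9\}$. If $p$ is odd and $p^k\neq 9$, take $a=p$ and $b=p+p^{k-1}$; a short computation modulo $p^k$ (using $2k-2\geq k$) gives $b^2\equiv p^2=a^2$, while $a-b=-p^{k-1}$ and $a+b=2p+p^{k-1}$ have $p$-adic valuations $k-1<k$ and $1<k$ respectively (here one uses $k\geq 2$, and $p\nmid 3$ in the case $k=2$, which holds since then $p\geq 5$), and $a,b\neq 0$; so $\{0\}$ is not sdf-absorbing. If $p=2$ and $2^k\neq 4$, i.e.\ $k\geq 3$, take $a=1$ and $b=1+2^{k-1}$; then $b^2\equiv 1=a^2$ modulo $2^k$, while $a-b=-2^{k-1}$ and $a+b=2+2^{k-1}$ both have $2$-adic valuation $<k$ (using $k\geq 3$), and $a,b\neq 0$ --- a contradiction. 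Hence $p^k\in\{4,9\}$, and combining everything, $n$ is prime, or $n\in\{4,9\}$, or $n=2p$ for an odd prime $p$.

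The genuinely delicate part is the prime-power analysis: one must arrange the counterexamples so that they degenerate \emph{only} in $\mathbb{Z}_4$ and $\mathbb{Z}_9$, which is exactly why the cases $p^k=4$ and $p^k=9$ survive (the natural choices $a=p$, $b=2p$ collapse there because $2p\equiv 0$ in $\mathbb{Z}_4$ and $p+2p\equiv 0$ in $\mathbb{Z}_9$). By contrast, the two Chinese Remainder Theorem reductions are routine once the right nilpotent element $p^{e-1}$ and the single-coordinate sign flip are identified, and the ``if'' direction is either a direct finite check or an appeal to Example~\ref{e100}(a).
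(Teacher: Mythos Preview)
Your proof is correct, but it follows a genuinely different route from the paper's. The paper argues the ``only if'' direction entirely inside $\mathbb{Z}_n$ by invoking the linear-system criterion of Theorem~\ref{t4}: for each excluded $n$ it writes down an explicit system $X+Y=\alpha$, $X-Y=\beta$ with $\alpha\beta\equiv 0\pmod n$ and checks that the resulting $x,y$ are nonzero while $\alpha,\beta$ are not (e.g.\ $\alpha=n/2$, $\beta=2$ when $4\mid n$; $\alpha=2n/q$, $\beta=2q$ or $\alpha=4n/q$, $\beta=2q$ otherwise). For the ``if'' direction with $n=2p$, the paper appeals to Example~\ref{e1}(a) (that $2p\mathbb{Z}$ is sdf-absorbing in $\mathbb{Z}$) together with Corollary~\ref{c2}(b), rather than computing in $\mathbb{Z}_2\times\mathbb{Z}_p$.

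Your approach, by contrast, is structural: you pass through the Chinese Remainder Theorem, exhibit a nilpotent coordinate $p^{e-1}$ when $n$ has a repeated prime and a second prime, flip a single sign when $n$ is squarefree with two odd prime factors, and in the pure prime-power case produce the pair $(p,\,p+p^{k-1})$ (or $(1,\,1+2^{k-1})$) whose failure degenerates precisely at $p^k\in\{4,9\}$. This ties the result more visibly to the product-ring analysis of Section~\ref{s3} (indeed your $n=2p$ argument is essentially Example~\ref{e100}(a)), and it makes transparent \emph{why} $4$ and $9$ are the lone exceptional prime powers, at the cost of not illustrating Theorem~\ref{t4}. Either argument is complete; they simply emphasize different pieces of the paper's toolkit.
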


\begin{proof}
Assume that $\{0\}$  is an sdf-absorbing ideal of $R = \mathbb{Z}_n$. First, suppose that $n \neq 2, 4$ is an even positive integer and $n \neq 2p$ for any odd prime $p$. We consider two cases. For the first case, assume that $4 \mid n$ in $\mathbb{Z}$. Then  the system of linear equations $X + Y = n/2$ and $X - Y = 2$ has a  solution $0 \neq x, y \in R$ and $x^2 - y^2 = 0$, but $x + y \neq 0$ and $x - y \neq 0$. Thus $\{0\}$ is not an sdf-absorbing ideal of $R$ by Theorem~\ref{t4}. For the second case, assume that $4  \nmid n$; so $n$ has an odd prime factor $q$. Then  the system of linear equations $X + Y = 2n/q$ and $X - Y = 2q$ has a solution $0 \neq x, y \in R$ (note that $n \neq 2p$ by assumption) and $x^2 - y^2 = 0$, but $x + y \neq 0$ and $x - y \neq 0$. Hence $\{0\}$ is not an sdf-absorbing ideal of $R$ by Theorem~\ref{t4}.  Next, suppose that $n$ is odd, not prime, $n \neq  3, 9$, and let $q$ be a prime factor of $n$. Then  the system of linear equations $X + Y = 4n/q$ and $X - Y = 2q$ has a  solution $0 \neq x, y \in R$ and $x^2 - y^2 = 0$, but  $x + y \neq 0$ and $x - y \neq 0$. Thus $\{0\}$ is not an sdf-absorbing ideal of $R$ by Theorem~\ref{t4} again.  Hence, if $\{0\}$ is an sdf-absorbing ideal of $\mathbb{Z}_n$, then $n = 4, n = 9, n = p$ is prime, or $n = 2p$ for some odd prime $p$.
	
Conversely, if $n = p$ is prime, then $\{0\}$ is a maximal ideal, and thus an sdf-absorbing ideal, of the field $\mathbb{Z}_n$. If $n = 4$ or $n = 9$, then one can easily verify that $\{0\}$ is an sdf-absorbing ideal of $\mathbb{Z}_n$. Finally, assume that $n = 2p$ for some odd prime $p$. Since $I = 2p\mathbb{\mathbb{Z}}$ is an sdf-absorbing ideal of $\mathbb{Z}$ by Example~\ref{e1}(a), we have that $I/I = \{0\}$ is an sdf-absorbing ideal of $\mathbb{Z}/2p\mathbb{Z} = \mathbb{Z}_{2p}$ by Corollary~\ref{c2}(b).
\end{proof}

We next investigate when the direct product of two ideals is an sdf-absorbing ideal. First, we consider the case when both ideals are nonzero proper ideals.

\begin{thm}\label{t600}
 Let $I_1, I_2$ be nonzero proper ideals of the commutative rings $R_1, R_2$, respectively. Then the following statements are equivalent.
	
{\rm (a)} $I_1 \times I_2$ is an sdf-absorbing ideal of $R_1 \times R_2$.

{\rm (b)} $I_1, I_2$ are sdf-absorbing ideals of $R_1, R_2$, respectively, and $2 \in I_1$ or $2 \in I_2$.
\end{thm}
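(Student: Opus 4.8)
The plan is to prove the two implications separately: the forward direction by feeding a couple of carefully chosen elements of $R_1\times R_2$ into the sdf-absorbing property of $I_1\times I_2$, and the backward direction by exploiting the hypothesis $2\in I_1$ (or $2\in I_2$) together with Theorem~\ref{l1}. Throughout, I will use that $I_1\times I_2$ is a proper nonzero ideal (both $I_i$ are proper and nonzero), so Remark~\ref{e0}(b) applies to all the ideals in sight.

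For $(a)\Rightarrow(b)$, assume $I_1\times I_2$ is sdf-absorbing. To see $I_1$ is sdf-absorbing, take $a,b\in R_1$ with $a^2-b^2\in I_1$ and apply the sdf-absorbing property to $(a,1),(b,1)\in R_1\times R_2$ (both nonzero, since $1\neq 0$ in $R_2$): from $(a,1)^2-(b,1)^2=(a^2-b^2,0)\in I_1\times I_2$ we get $(a+b,2)\in I_1\times I_2$ or $(a-b,0)\in I_1\times I_2$, i.e.\ $a+b\in I_1$ or $a-b\in I_1$; by symmetry $I_2$ is sdf-absorbing. For the condition on $2$, apply the sdf-absorbing property to $(1,-1),(1,1)\in R_1\times R_2$: here $(1,-1)^2-(1,1)^2=(0,0)\in I_1\times I_2$, so $(2,0)\in I_1\times I_2$ or $(0,-2)\in I_1\times I_2$, which forces $2\in I_1$ or $2\in I_2$.

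For $(b)\Rightarrow(a)$, by the symmetry between the two coordinates we may assume $2\in I_1$. By Remark~\ref{e0}(b) it suffices to show that whenever $a=(a_1,a_2)$ and $b=(b_1,b_2)$ satisfy $a^2-b^2\in I_1\times I_2$, then $a+b\in I_1\times I_2$ or $a-b\in I_1\times I_2$. From $a_1^2-b_1^2\in I_1$, the sdf-absorbing property of $I_1$ gives $a_1+b_1\in I_1$ or $a_1-b_1\in I_1$; since $2b_1\in I_1$, each of $a_1\pm b_1$ differs from the other by an element of $I_1$, so in fact \emph{both} $a_1+b_1\in I_1$ and $a_1-b_1\in I_1$ (this is exactly the mechanism of Theorem~\ref{l1}). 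Now from $a_2^2-b_2^2\in I_2$ and $I_2$ sdf-absorbing we get $a_2+b_2\in I_2$ or $a_2-b_2\in I_2$; in the first case $a+b=(a_1+b_1,a_2+b_2)\in I_1\times I_2$, and in the second case $a-b=(a_1-b_1,a_2-b_2)\in I_1\times I_2$. Hence $I_1\times I_2$ is sdf-absorbing.

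Once the right test elements are found, everything is bookkeeping, and I do not expect a genuine obstacle; the one conceptual point worth isolating is recognizing in $(b)\Rightarrow(a)$ that $2\in I_1$ is precisely what lets one pass freely between $a_1+b_1$ and $a_1-b_1$, so that whichever of $a_2\pm b_2$ happens to lie in $I_2$ can always be matched in the first coordinate. This also makes transparent why the $2$-condition cannot be dropped, and the $(1,\pm1)$ computation in $(a)\Rightarrow(b)$ shows it is actually forced. The only minor care needed is with the degenerate cases $a_i=0$ or $b_i=0$, which are absorbed by Remark~\ref{e0}(b) and Theorem~\ref{t1} since $I_1$ and $I_2$ are nonzero.
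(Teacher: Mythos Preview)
Your proof is correct and follows essentially the same approach as the paper: the same test elements $(1,\pm 1)$ for the $2$-condition in $(a)\Rightarrow(b)$, and the same use of Remark~\ref{e0}(b) together with Theorem~\ref{l1} in $(b)\Rightarrow(a)$. The only difference is that you spell out the verification that $I_1$ is sdf-absorbing via $(a,1),(b,1)$, whereas the paper simply says ``it is easily shown.''
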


\begin{proof}
$(a) \Rightarrow (b)$ Let  $I = I_1 \times I_2$ be an sdf-absorbing ideal of $R = R_1 \times R_2$. Then it is easily shown that $I_1, I_2$ are sdf-absorbing ideals of $R_1$, $R_2$, respectively. Next, let $a = (1, 1), b = (1, -1) \in R$. Then $a^2 - b^2 =  (0, 0) \in I$; so $(2,0) = a + b  \in I$ or $(0,2) = a - b) \in I$. Thus $2 \in I_1$ or $2 \in I_2$.
		
$(b) \Rightarrow (a)$  We may assume that $2 \in I_1$. Let $(0, 0) \not = a = (a_1, a_2), b = (b_1, b_2) \in R$ with $a^2 - b^2 \in I$. Then $a_1^2 - b_1^2 \in I_1$, and thus $a_1 + b_1 \in I_1$ or $a_1 - b_1 \in I_1$ by Remark~\ref{e0}(b) since $I_1$ is a nonzero sdf-absorbing ideal of $R_1$. Since $2 \in I_1$, we have $a_1 +b_1, a_1 - b_1 \in I_1$ by  Theorem~\ref{l1}. Also, $a_2^2 - b_2^2 \in I_2$; so $a_2 + b_2 \in I_2$ or $a_2 - b_2 \in I_2$ by Remark~\ref{e0}(b) again since $I_2$ is a nonzero sdf-absorbing ideal of $R_2$. If $a_2 + b_2 \in I_2$, then $a + b \in I$. If $a_2 - b_2 \in I_2$, then $a - b \in I$. Thus $I$ is an sdf-absorbing ideal of $R$.
\end{proof}

 Now we consider the case when one of the ideals in the product is either zero or the whole ring.

\begin{thm}\label{t600.5}
Let $I_1, I_2$ be nonzero proper ideals of the commutative rings $R_1, R_2$, respectively.

{\rm (a)} $\{0\} \times R_2$ is an sdf-absorbing ideal of $R_1 \times R_2$ if and only if $R_1$ is reduced and $\{0\}$ is an sdf-absorbing ideal of $R_1$. A similar result holds for $R_1 \times \{0\}$.
	
{\rm (b)} $I_1 \times R_2$ is an sdf-absorbing ideal of $R_1 \times R_2$ if and only if $I_1$ is an sdf-absorbing ideal of $R_1$. A similar result holds for $R_1 \times I_2$.

{\rm (c)} $\{0\} \times I_2$ is an sdf-absorbing ideal of $R_1 \times R_2$ if and only if $\{0\}$ is an sdf-absorbing ideal of $R_1$, $R_1$ is reduced, $I_2$ is an sdf-absorbing ideal of $R_2$, and char$(R_1) = 2$ or $2 \in I_2$. A similar result holds for $I_1 \times \{0\}$.
	
{\rm (d)} $\{0\} \times \{0\}$ is an sdf-absorbing ideal of $R_1 \times R_2$ if and only if $\{0\}$ is an sdf-absorbing ideal of $R_1$ and $R_2$, $R_ 1$ and $R_2$ are reduced, and char$(R_1) =2$ or char$(R_2) =2$.
\end{thm}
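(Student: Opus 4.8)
The plan is to settle the four parts one at a time, each ``similar result'' clause following by applying the corresponding part to the ring $R_2 \times R_1$ (isomorphic to $R_1 \times R_2$ via the coordinate swap). Write $\pi_1 : R_1 \times R_2 \to R_1$ for the first projection; it is surjective with $\ker(\pi_1) = \{0\} \times R_2$. Parts (a) and (b) I would deduce from Theorem~\ref{t7}. For (b): since $\ker(\pi_1) \subseteq I_1 \times R_2$ and $\pi_1(I_1 \times R_2) = I_1$, Theorem~\ref{t7}(c) gives the forward implication, while $I_1 \times R_2 = \pi_1^{-1}(I_1)$ with $I_1$ nonzero gives the converse by Theorem~\ref{t7}(a). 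For (a): the forward implication again uses Theorem~\ref{t7}(c) (for ``$\{0\}$ sdf-absorbing in $R_1$'') together with the fact that $\{0\} \times R_2$ is a \emph{nonzero} sdf-absorbing ideal, hence radical by Theorem~\ref{t1}, so that $R_1 \cong (R_1 \times R_2)/(\{0\} \times R_2)$ is reduced; the converse is the direct argument from the proof of Theorem~\ref{t569}(b) (handle the case $a_1, b_1 \neq 0$ via $\{0\}$ being sdf-absorbing in $R_1$, and the case $a_1 = 0$ or $b_1 = 0$ via reducedness of $R_1$, which forces $a_1 = b_1 = 0$).

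For (c), assume first that $\{0\} \times I_2$ is sdf-absorbing. It is nonzero, hence radical by Theorem~\ref{t1}, so $R_1 \times (R_2/I_2) \cong (R_1 \times R_2)/(\{0\} \times I_2)$ is reduced; thus $R_1$ is reduced and $I_2$ is radical. Plugging the nonzero pairs $(a_1, 0), (b_1, 0)$ (for $0 \neq a_1, b_1 \in R_1$ with $a_1^2 = b_1^2$) into the definition shows $\{0\}$ is sdf-absorbing in $R_1$. Plugging in the nonzero pairs $(1, c), (1, d)$ for $0 \neq c, d \in R_2$ with $c^2 - d^2 \in I_2$ forces $(2, c+d) \in \{0\} \times I_2$ or $(0, c-d) \in \{0\} \times I_2$; since membership of the first requires $2 = 0$ in $R_1$ together with $c + d \in I_2$, in every case $c + d \in I_2$ or $c - d \in I_2$, so $I_2$ is sdf-absorbing. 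Taking $c = 1$, $d = -1$ in this same computation gives $(2, 0) \in \{0\} \times I_2$ or $(0, 2) \in \{0\} \times I_2$, i.e.\ char$(R_1) = 2$ or $2 \in I_2$. Conversely, given nonzero $a = (a_1, a_2), b = (b_1, b_2)$ with $a_1^2 = b_1^2$ and $a_2^2 - b_2^2 \in I_2$: reducedness of $R_1$ and $\{0\}$ sdf-absorbing in $R_1$ give $a_1 + b_1 = 0$ or $a_1 - b_1 = 0$; if char$(R_1) = 2$ then both hold, and if $2 \in I_2$ then Theorem~\ref{l1} together with Remark~\ref{e0}(b) (applicable since $I_2 \neq \{0\}$) gives $a_2 + b_2, a_2 - b_2 \in I_2$; matching a vanishing first coordinate with a second coordinate in $I_2$ puts $a + b$ or $a - b$ in $\{0\} \times I_2$.

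For (d), $\{0\} \times \{0\}$ is the zero ideal, so Theorem~\ref{t1} does not apply and reducedness of the factors must be shown directly: if $0 \neq t \in R_1$ had $t^2 = 0$, the nonzero pair $(t, 1), (0, 1)$ would violate the definition, since $(t,1)^2 - (0,1)^2 = (0,0)$ while neither $(t,2)$ nor $(t,0)$ equals $(0,0)$. Testing $(a_1, 1), (b_1, 1)$ then shows $\{0\}$ is sdf-absorbing in $R_1$, and testing $(1,1), (1,-1)$ shows char$(R_1) = 2$ or char$(R_2) = 2$ (all three arguments apply verbatim with the factors interchanged, giving the $R_2$ conditions). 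For the converse, assume without loss of generality char$(R_1) = 2$; for a nonzero pair with $a_1^2 = b_1^2$ and $a_2^2 = b_2^2$, reducedness of $R_1$ and $\{0\}$ sdf-absorbing in $R_1$ force $a_1 = b_1$, hence $a_1 \pm b_1 = 0$ in characteristic $2$, while in $R_2$ one similarly gets $a_2 + b_2 = 0$ or $a_2 - b_2 = 0$, and pairing these shows $a + b = (0,0)$ or $a - b = (0,0)$.

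I expect the forward direction of (c) to be the main obstacle: extracting the disjunction ``char$(R_1) = 2$ or $2 \in I_2$'' requires the right choice of test pairs and a clean case split on whether $2$ vanishes in $R_1$, all while keeping every element fed to the sdf-absorbing hypothesis nonzero in $R_1 \times R_2$. The remainder is bookkeeping: keeping straight that $\{0\} \times R_2$, $I_1 \times R_2$, and $\{0\} \times I_2$ are nonzero (so Theorem~\ref{t1} and Remark~\ref{e0}(b) are available) whereas $\{0\} \times \{0\}$ is not, which is exactly why reducedness has to be argued by hand in part (d).
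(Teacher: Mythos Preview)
Your proof is correct and follows essentially the same approach as the paper. The only notable organizational difference is that you systematically lean on previously established results---Theorem~\ref{t7} for parts (a) and (b), Theorem~\ref{t1} to extract reducedness in (a) and (c) via the quotient, and Theorem~\ref{l1} with Remark~\ref{e0}(b) for the converse of (c)---whereas the paper tends to argue directly with test elements (e.g., using $(c,1),(0,1)$ to show $R_1$ is reduced in (a), and simply saying (c) is ``similar to part (a)''). Both routes are short; yours has the virtue of making the dependence on earlier theorems explicit, while the paper's direct arguments are more self-contained.
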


\begin{proof} Let $R = R_1 \times R_2$.

	{\rm (a)} Assume that  $\{0\} \times R_2$ is an sdf-absorbing ideal of $R$. Then it is clear that $\{0\}$ is an sdf-absorbing ideal of $R_1$. We show that $R_1$ is reduced. Let $c\in R_1$ with $c^2 = 0$, and $a = (c, 1), b = (0, 1) \in R$. Then $a^2 - b^2 = (0,0) \in \{0\} \times R_2$; so  $(c, 2) = a + b \in \{0\} \times R_2$ or $(c, 0) = a - b \in  \{0\} \times R_2$. Thus $c = 0$; so $R_1$ is reduced.
	
Conversely, assume that $R_1$ is reduced and $\{0\}$ is an sdf-absorbing ideal of $R_1$. Let $(0,0) \neq a = (a_1, a_2), b = (b_1, b_2) \in R$ with $a^2 - b^2 \in \{0\} \times R_2$. Since $R_1$ is reduced, we have  $a_1 = b_1 = 0$  or $0 \neq a_1, b_1 \in R_1$. Hence $a_1 + b_1 = 0$ or $a_1 - b_1 = 0$; so $a + b \in  \{0\} \times R_2$  or $ a - b \in \{0\} \times R_2$. Thus $\{0\} \times R_2$ is an sdf-absorbing ideal of $R$.

{\rm (b)}  The proof is similar to that of Theorem~\ref{t600}.
	
{\rm (c)} The proof is similar to part (a) above.
	
{\rm (d)} Let $I = \{0\} \times \{0\}$ be an sdf-absorbing ideal of $R$. Then it is easily shown that $\{0\}$ is an sdf-absorbing ideal of $R_1$ and $R_2$. By an argument similar to that in part (a) above, we have that $R_1$ and $R_2$ are reduced.  Let $a = (1, 1), b = (1, -1) \in R$. Then $a^2 - b^2 =  (0, 0) \in I$; so $(2,0) = a + b  \in I$ or $(0,2) = a - b  \in I$. Thus char$(R_1) = 2$ or char$(R_2)  = 2$.

  Conversely, assume that $\{0\}$ is an sdf-absorbing ideal of $R_1$ and $R_2$, $R_ 1$ and $R_2$ are reduced, and char$(R_1) =2$ or char$(R_2) =2$. We may assume that char$(R_2) =2$. Let $(0, 0) \neq  a = (a_1, a_2), b = (b_1, b_2) \in R$ with $a^2 - b^2 = (0, 0)$. Since $R_1$ is reduced, we have $a_1= b_1  = 0$ or $0 \neq a_1, b_1  \in R_1$. Since  $a_1^2 - b_1^2 = 0$, we have  $a_1 + b_1 = 0$ or  $a_1 - b_1 = 0$. Similarly, $a_2 + b_2 = a_2 - b_2 = 0$ since char$(R_2) = 2$. If $a_1 + b_1 = 0$, then $a + b = (0, 0)$. If $a_1 - b_1 = 0$, then $a - b = (0, 0)$. Thus $I = \{(0, 0)\}$ is an sdf-absorbing ideal of $R$.
\end{proof}

\begin{remk} \label{r815}
{\rm (a) The previous two theorems may be combined by an abuse of definition (consider the whole ring to be an sdf-absorbing radical ideal, and note that $2 \in \{0\}$ if and only if the ring has characteristic $2$). Let $I_1, I_2$ be ideals of $R_1, R_2$, respectively, not both the whole ring.  Then $I_1 \times I_2$ is an sdf-absorbing ideal of $R_1 \times R_2$ if and only if $I_1, I_2$ are sdf-absorbing radical ideals of $R_1, R_2$, respectively, and char$(R_1) = 2$ or char$(R_2) = 2$.

(b) $\{0\}$ and $\{0, 2\}$ are sdf-absorbing ideals of $\mathbb{Z}_4$, but $\{0\} \times \{0\}$, $\{0\} \times \{0,2\}$, and $\{0\} \times \mathbb{Z}_4$ are not sdf-absorbing ideals of $\mathbb{Z}_4 \times \mathbb{Z}_4$ by Theorem~\ref{t600.5} (or choose $a = (2,1), b= (0,1)$). Also, see Example~\ref{e707}.}
\end{remk}

In view of Example~\ref{e1}(a), Theorem~\ref{t600}, and Theorem~\ref{t600.5}, we have the following example.

\begin{exa} \label{e707}
{\rm Let $R = \mathbb{Z} \times \mathbb{Z}$ and $p \in \mathbb{Z}$ a positive prime. Then a nonzero ideal $I$ of $R$ is an sdf-absorbing ideal of $R$ if and only if $I$ is a prime ideal of $R$ (i.e., $I = \{0\} \times \mathbb{Z}$, $I = p\mathbb{Z} \times \mathbb{Z}$, $I = \mathbb{Z} \times \{0\}$, or $I = \mathbb{Z} \times p\mathbb{Z}$), $I = 2\mathbb{Z} \times p\mathbb{Z}$,  $I = p\mathbb{Z} \times 2\mathbb{Z}$, $I = 2p\mathbb{Z} \times \mathbb{Z}$ ($p \not = 2$), $I = \mathbb{Z} \times 2p\mathbb{Z}$ ($p \neq 2$), $I = 2\mathbb{Z} \times 2p\mathbb{Z}$ ($p \neq 2$),  $I = 2p\mathbb{Z} \times 2\mathbb{Z}$ ($p \neq 2$),  $I  = \{0\} \times 2\mathbb{Z}$, or $I = 2\mathbb{Z} \times \{0\}$.

The ideals $\{0\} \times \{0\}, \{0\} \times p\mathbb{Z}$ ($p \neq 2$),  $p\mathbb{Z} \times \{0\}$  ($p \neq 2$),  $\{0\} \times 2p\mathbb{Z}$ ($p \neq 2$), and $2p\mathbb{Z} \times \{0\}$  ($p \neq 2$) are not sdf-absorbing ideals of $R$ (or choose $a = (1,1), b = (1,-1)$).}
\end{exa}

In the following result, we determine the sdf-absorbing ideals in idealization rings. Recall that for a commutative ring $R$ and $R$-module $M$, the {\it idealization of $R$ and $M$} is the commutative ring $R(+)M = R \times M$ with identity $(1,0)$ under addition defined by $(r,m) + (s,n) = (r+s,m + n)$ and multiplication defined by $(r,m)(s,n) = (rs,rn + sm)$. For more on idealizations, see \cite{AW, H}.  Every ideal of $R(+)M$ has the form $I(+)N$ for $I$ an ideal of $R$ and $N$ a submodule of $M$ (\cite[Theorem 25.1(1)]{H}); so Theorem~\ref{t907} completely determines the nonzero sdf-absorbing ideals of $R(+)M$.

\begin{thm}\label{t907}
Let $R$ be a commutative ring, $I$ a nonzero proper ideal of $R$, $M$  an $R$-module, and $N$ a submodule of $M$. Then $I (+) N$ is an sdf-absorbing ideal of $R (+) M$ if and only if  $I$ is an sdf-absorbing ideal of $R$ and $N = M$.
\end{thm}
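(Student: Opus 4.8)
The plan is to prove both implications by directly unwinding the multiplication in $R(+)M$. The one formula to have in hand is $(r,m)^2 = (r^2, 2rm)$, so that $(a,m_1)^2 - (b,m_2)^2 = (a^2-b^2,\ 2(am_1 - bm_2))$; hence this element lies in $I(+)N$ exactly when $a^2-b^2 \in I$ and $2(am_1-bm_2) \in N$. I would also record two standing facts at the outset: $I(+)N$ is automatically a proper ideal since $I \neq R$, and for $I(+)N$ to be an ideal of $R(+)M$ one needs $IM \subseteq N$ (seen from $(i,0)(0,m) = (0,im)$). This last observation is the load-bearing one, so I flag it now.

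For the ``if'' direction, assume $I$ is an sdf-absorbing ideal of $R$ and $N = M$. Take $(0,0)\neq (a,m_1),(b,m_2)\in R(+)M$ with $(a,m_1)^2 - (b,m_2)^2 \in I(+)M$. The module coordinate imposes nothing, so the only surviving condition is $a^2 - b^2 \in I$. Since $I$ is nonzero, Remark~\ref{e0}(b) lets me drop the ``$a,b\neq 0$'' requirement, so $a+b \in I$ or $a-b\in I$; the corresponding second coordinate $m_1\pm m_2$ lies in $M$ trivially, so $(a,m_1)+(b,m_2)\in I(+)M$ or $(a,m_1)-(b,m_2)\in I(+)M$, and $I(+)M$ is sdf-absorbing.

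For the ``only if'' direction, suppose $I(+)N$ is sdf-absorbing. That $I$ is sdf-absorbing comes from testing on elements of the form $(a,0)$: if $0\neq a,b\in R$ with $a^2-b^2\in I$, then $(a,0),(b,0)$ are nonzero, $(a,0)^2-(b,0)^2 = (a^2-b^2,0)\in I(+)N$, so $a+b\in I$ or $a-b\in I$. To force $N=M$, suppose for contradiction $N \neq M$ and pick $m\in M\setminus N$ and $0\neq i\in I$ (possible as $I\neq\{0\}$). Then $(i,m)$ and $(i,0)$ are both nonzero, and $(i,m)^2-(i,0)^2 = (0,\ 2im)\in I(+)N$ since $0\in I$ and $2im\in IM\subseteq N$. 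But $(i,m)+(i,0)=(2i,m)\notin I(+)N$ and $(i,m)-(i,0)=(0,m)\notin I(+)N$, both failing in the module coordinate because $m\notin N$ --- contradicting that $I(+)N$ is sdf-absorbing. Hence $N=M$.

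So the argument is essentially bookkeeping, and I do not expect a genuine obstacle; the step I would be most careful about is the use of $IM\subseteq N$, which is exactly what makes $(0,2im)\in I(+)N$ in the contradiction and what pins down the module part, together with the repeated appeal to Remark~\ref{e0}(b) to ignore the nonzero restriction on $a,b$ whenever the relevant ideal ($I$, or $I(+)N$) is nonzero.
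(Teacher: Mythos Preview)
Your proof is correct and follows essentially the same strategy as the paper's: both directions are handled by direct computation in $R(+)M$, with Remark~\ref{e0}(b) invoked to drop the nonzero hypothesis on the first coordinate. The one minor difference is in the contradiction for $N\neq M$: the paper takes $a=(i,0)$ and $b=(0,m)$, so that $b^2=(0,0)$ and $a^2-b^2=(i^2,0)\in I(+)N$ immediately, without needing your observation that $IM\subseteq N$; your pair $(i,m),(i,0)$ works too but routes through that extra fact.
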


\begin{proof} Let $A = R (+) M$ and $J = I (+) N$.

Assume that $J$ is an sdf-absorbing ideal of $A$. It is easily verified that $I$ is an sdf-absorbing ideal of $R$. By way of contradiction, assume that $N \subsetneq M$; so there is an $m \in M \setminus N$. Let $0 \neq i \in I$, and $a = (i, 0), b = (0, m) \in R(+)M = A$. Then $a^2 - b^2 = (i^2,  0) \in I(+)N = J$, but $a + b = (i, m) \not \in J$ and $a - b = (i, -m) \not \in J$, a contradiction. Thus $N = M$.

Conversely, assume that $I$ is an sdf-absorbing ideal of $R$ and $N = M$. Let $a^2 - b^2 \in J$  for $(0, 0) \neq a, b \in A$, where $a = (a_1, m_1)$ and $b =(b_1, m_2)$. Since $I$  is a nonzero sdf-absorbing ideal of $R$ and $a_1^2 - b_1^2 \in I$, we have  $a_1 + b_1 \in I$ or  $a_1 - b_1 \in I$ by Remark~\ref{e0}(b). If $a_1 + b_1 \in I$, then $a + b \in  I (+) M = J$. If $a_1 - b_1 \in I$, then $a - b \in  I (+) M = J$. Thus $J =  I (+) M$ is an sdf-absorbing ideal of $A$.
\end{proof}

The following example shows that it is crucial that $I$ be a nonzero ideal in Theorem~\ref{t907}.

\begin{exa} \label{e101}
{\rm  Let $R = \mathbb{Z}_4$, $M =N = \mathbb{Z}_4$, and $I = \{0\}$. Then $\{0\}$ is an sdf-absorbing ideal of $\mathbb{Z}_4$, but $\{0\} (+) \mathbb{Z}_4$ is not an sdf-absorbing ideal of $\mathbb{Z}_4 (+) \mathbb{Z}_4$ by Theorem~\ref{t1} since  $\{0\} (+) \mathbb{Z}_4$ is not a radical ideal of $\mathbb{Z}_4 (+) \mathbb{Z}_4$  (or consider $x = (2,0), y = (0,2)$).}
\end{exa}

Next, we consider when $\{0\}(+)N$ is an sdf-absorbing ideal of $R(+)M$.

\begin{remk} \label{r101}
{ \rm Let $R$ be a commutative ring and $M$ a nonzero $R$-module.

 (a) If $\{0\}(+)N$ is an sdf-absorbing ideal of $R(+)M$ for $N$ a proper submodule of $M$, then $N = \{0\}$ by Theorem~\ref{t1}.

(b) It is easily shown that $\{0\}(+)M$ is an sdf-absorbing ideal of $R(+)M$ if and only if $R$ is reduced and $\{0\}$ is an sdf-absorbing ideal of $R$ (cf. Example~\ref{e101}).

(c) It is easily shown that $\{(0,0)\}$ is not an sdf-absorbing ideal of $R(+)M$ when $|M| \neq 3$. However, $\{(0,0)\}$ is an sdf-absorbing ideal of $\mathbb{Z}_3(+)\mathbb{Z}_3$, but not of $\mathbb{Z}(+)\mathbb{Z}_3$.}
\end{remk}
	
In the next result, we study sdf-absorbing ideals in amalgamation rings. Let $A, B$ be commutative rings, $f: A \longrightarrow B$ a homomorphism, and $J$ an ideal of $B$. Recall that the {\it amalgamation of $A$ and $B$ with respect to $f$ along $J$} is the subring $A \bowtie_J B = \{(a, f(a) + j ) \mid a \in A, j \in J\}$ of $A \times B$.

\begin{thm}\label{t709}
Let $A$ and $B$ be commutative rings, $f: A \longrightarrow B$ a homomorphism, $J$ an ideal of $B$, and $I$ a nonzero proper ideal of $A$. Then $I \bowtie_J B$ is an sdf-absorbing ideal of $A\bowtie_J B$ if and only if $I$ is an sdf-absorbing ideal of $A$.
\end{thm}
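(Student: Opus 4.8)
The plan is to exploit the natural ring surjection $p\colon A\bowtie_J B \to A$, $p(a,f(a)+j)=a$, and to observe that $I\bowtie_J B$ is exactly its pullback of $I$. Concretely, $p$ is surjective (take $j=0$), its kernel is $\{(0,j)\mid j\in J\}=\{0\}\bowtie_J B$, and directly from the definition $I\bowtie_J B=\{(i,f(i)+j)\mid i\in I,\ j\in J\}=p^{-1}(I)$. Taking $i=0$ shows $\ker p\subseteq I\bowtie_J B$, and since $p(I\bowtie_J B)=I\subsetneq A$ the ideal $I\bowtie_J B$ is proper in $A\bowtie_J B$. These identifications are the only slightly fiddly part; once they are in place both directions are short.

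For the ``only if'' direction, assume $I\bowtie_J B$ is an sdf-absorbing ideal of $A\bowtie_J B$. Since $p$ is surjective and $I\bowtie_J B$ is an sdf-absorbing ideal of $A\bowtie_J B$ containing $\ker p$, Theorem~\ref{t7}(c) applies and yields that $p(I\bowtie_J B)=I$ is an sdf-absorbing ideal of $A$.

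For the converse, assume $I$ is a nonzero sdf-absorbing ideal of $A$. Take nonzero $x=(a,f(a)+j_1)$, $y=(b,f(b)+j_2)$ in $A\bowtie_J B$ with $x^2-y^2\in I\bowtie_J B$. Comparing first coordinates gives $a^2-b^2\in I$, so by Remark~\ref{e0}(b) (which removes the ``$a,b\neq 0$'' requirement because $I$ is nonzero) either $a+b\in I$ or $a-b\in I$. If $a+b\in I$ then $x+y=(a+b,\ f(a+b)+(j_1+j_2))\in I\bowtie_J B$, since $a+b\in I$ and $j_1+j_2\in J$; symmetrically, if $a-b\in I$ then $x-y\in I\bowtie_J B$. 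Hence $I\bowtie_J B$ is an sdf-absorbing ideal of $A\bowtie_J B$.

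The only real obstacle is the preliminary bookkeeping: pinning down that $I\bowtie_J B$ coincides with $p^{-1}(I)$ and that $\ker p\subseteq I\bowtie_J B$, so that Theorem~\ref{t7}(c) can be invoked cleanly; after that everything is a routine coordinatewise check. It is worth noting explicitly that the nonzero hypothesis on $I$ is used essentially in the converse (through Remark~\ref{e0}(b)), which is why the statement restricts to nonzero $I$.
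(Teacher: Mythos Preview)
Your proof is correct and follows essentially the same approach as the paper. The converse direction is identical, and for the ``only if'' direction you make explicit (via the projection $p$ and Theorem~\ref{t7}(c)) what the paper leaves as ``easily verified''; indeed, one could also cite Theorem~\ref{t7}(a) for the converse once you have established $I\bowtie_J B=p^{-1}(I)$, shortening that half further.
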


\begin{proof}
If $I \bowtie_J B$ is an sdf-absorbing ideal of $A\bowtie_J B$, then it is easily verified $I$ is an sdf-absorbing ideal of $A$.

Conversely,  assume that $I$ is a nonzero sdf-absorbing ideal of $A$. Let $x = (a, f(a)  + j_1), y = (b, f(b) + j_2) \in A\bowtie_J B$ such that $x^2 - y^2 \in I \bowtie_J B$. Since $a^2 - b^2 \in I$ and $I$ is a nonzero sdf-absorbing ideal of $A$, we have $a + b \in I$ or $a - b \in I$ by Remark~\ref{e0}(b). If $a + b\in I$, then  $x + y = (a + b, f(a) + j_1 + f(b) + j_2) = (a + b, f(a + b) + j_1 +j_2)\in I\bowtie_J B$. Similarly, if $a - b \in I$, then $x - y = (a - b, f(a -b) + j_1 - j_2) \in I\bowtie_J B$. Thus $I\bowtie_J B$ is an sdf-absorbing ideal of $A\bowtie_J B$.
\end{proof}

The following example shows that it is again crucial that $I$ be a nonzero ideal in Theorem~\ref{t709}.

\begin{exa}
{\rm Let $A = B = J = \mathbb{Z}_4$, $f = 1_A : A \longrightarrow A$, and $I = \{0\}$. Then $\{0\}$ is an sdf-absorbing ideal of $\mathbb{Z}_4$, but $\{0\} \bowtie_{\mathbb{Z}_4} \mathbb{Z}_4$ is not an sdf-absorbing ideal of $\mathbb{Z}_4\bowtie_{\mathbb{Z}_4} \mathbb{Z}_4$ by Theorem~\ref{t1} since  $\{0\}\bowtie_{\mathbb{Z}_4} \mathbb{Z}_4 \neq \{(0, 0)\}$ is not a radical ideal of $\mathbb{Z}_4 \bowtie_{\mathbb{Z}_4} \mathbb{Z}_4$ (or consider $x = (2,0), y = (0,2)$).}
\end{exa}

Let $T$ be an integral domain of the form $K + M$, where the field $K$ is a subring of $T$ and $M$ is a nonzero maximal ideal of $T$, and let $D$ be a subring of $K$. Then $R = D + M$ is a subring of $T$ with the same quotient field as $T$. This ``$D + M$'' construction has proved very useful for constructing examples since ring-theoretic properties of $R$ are often determined by those of $T$ and $D$. The ``classical'' case, when $T$ is a valuation domain, was first studied systemically in \cite[Appendix II]{GQ}, and the ``generalized'' $D + M$ construction as above was introduced and studied in \cite{BR}. The next several results concern the sdf-absorbing ideals in $D + M$. (The relevant facts concerning ideals in $D + M$ used in the proof of Theorem~\ref{t763} may be found in \cite[Theorem A, p. 560]{GQ} or \cite[Exercise 11, p. 202]{G}.)

\begin{thm} \label{t763}  Let $T = K + M$ be an integral domain, where the field $K$ is a subring of $T$ and $M$ is a nonzero  maximal ideal of $T$, and let $D$ be a subring of $K$ and $R = D + M$.

{\rm (a)}  Let $I$ be an ideal of $D$. Then $I + M$ is an sdf-absorbing ideal of $R$ if and only if $I$ is an sdf-absorbing ideal of $D$.

{\rm (b)} Let $T$ be a valuation domain. Then $J$ is an sdf-absorbing ideal of $R$ if and only if $J = I +M$, where $I$ is an sdf-absorbing ideal of $D$, or $J$ is a prime ideal of $T$.
\end{thm}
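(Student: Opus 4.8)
The plan is to treat the two parts in turn, with part (a) a short computation and part (b) built on part (a) together with two elementary valuation-theoretic facts about $T$.

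For part (a), the crucial elementary observations are that $D \cap M = \{0\}$ (because $D \subseteq K$ and $K \cap M = \{0\}$, since $K$ is a field and $M$ is a proper ideal of $T$), so that $(I + M) \cap D = I$, and that the projection $R = D + M \longrightarrow D$ is a ring epimorphism with kernel $M$, so $R/M \cong D$. If $I + M$ is sdf-absorbing in $R$ and $0 \ne a, b \in D$ satisfy $a^2 - b^2 \in I$, then $a^2 - b^2 \in I + M$, so $a + b \in I + M$ or $a - b \in I + M$, and intersecting with $D$ yields $a + b \in I$ or $a - b \in I$; hence $I$ is sdf-absorbing in $D$. Conversely, suppose $I$ is sdf-absorbing in $D$. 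If $I = \{0\}$, then $I + M = M$ is a prime, hence sdf-absorbing, ideal of $R$ because $R/M \cong D$ is a domain. If $I \ne \{0\}$, write $x = a + m_1$ and $y = b + m_2$ with $a, b \in D$ and $m_1, m_2 \in M$; then $x^2 - y^2 \equiv a^2 - b^2 \pmod{M}$, so $x^2 - y^2 \in I + M$ forces $a^2 - b^2 \in (I + M) \cap D = I$, whence $a + b \in I$ or $a - b \in I$ by Remark~\ref{e0}(b) (the ``nonzero'' hypothesis on $I$ is needed here, as $a$ or $b$ may vanish even when $x, y \ne 0$), and therefore $x + y \in I + M$ or $x - y \in I + M$.

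For part (b), the two ``if'' implications are quick: if $J = I + M$ with $I$ sdf-absorbing in $D$ then $J$ is sdf-absorbing by part (a); and if $J$ is a prime ideal of $T$, then $J \subseteq M$ (as $M$ is the maximal ideal of the valuation domain $T$), so $J$ is an ideal of $R$ with $R/J$ a subring of the domain $T/J$, making $J$ a prime --- hence sdf-absorbing --- ideal of $R$. For the ``only if'' direction, let $J$ be an sdf-absorbing ideal of $R$; I would first establish the dichotomy that $M \subseteq J$ or $J \subseteq M$. If $J \not\subseteq M$, pick $x = d + m_0 \in J$ with $d \in D \setminus \{0\}$ (so $d \in U(T)$, since $d \in K \setminus \{0\}$); then for each $m \in M$ the element $x^2(m/d^2)$ belongs to $J$ and equals $m(1 + m_0/d)^2$, and since $1 + m_0/d \in 1 + M \subseteq U(T)$ and $M$ is a $T$-ideal, as $m$ ranges over $M$ so does $m(1 + m_0/d)^2$, forcing $M \subseteq J$. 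When $M \subseteq J$, the standard description of ideals of $D + M$ gives $J = I + M$ for an ideal $I$ of $D$, and $I$ is sdf-absorbing by part (a). When $J \subseteq M$ with $J = \{0\}$, $J$ is the prime ideal $\{0\}$ of the domain $T$, and we are done.

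The remaining case $\{0\} \ne J \subseteq M$ is where the real work lies, and I expect it to be the main obstacle. The goal is to show $J$ is a prime ideal of $T$. By Theorem~\ref{t1}, $J$ is a radical ideal of $R$. The key step is to promote $J$ to an ideal of $T$: for $k \in K$ and $j \in J$ one has $kj \in M \subseteq R$ and $(kj)^2 = (k^2 j)\,j \in J$ (since $k^2 j \in M \subseteq R$ and $j \in J$), so radicality of $J$ in $R$ gives $kj \in J$; hence $TJ = KJ + MJ \subseteq J$. Since $J \subseteq M$ and $T$ is quasilocal with maximal ideal $M$, every $x \in T$ with $x^n \in J$ for some $n$ lies in $M \subseteq R$, so the radical of $J$ in $T$ equals its radical in $R$, namely $J$; thus $J$ is a radical ideal of the valuation domain $T$ and therefore a prime ideal of $T$ by \cite[Theorem 17.1(2)]{G} (cf. Example~\ref{e1}(g)). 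The one point needing care is this ``squaring trick'', which uses the square of a $K$-multiple of $j$ to land back inside the $R$-ideal $J$; the rest is bookkeeping with $D \cap M = \{0\}$ and the quasilocality of $T$.
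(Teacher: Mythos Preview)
Your proof is correct and follows the same overall architecture as the paper: for (a) you use the surjection $R \to R/M \cong D$ (which is exactly the content of the paper's one-line citation of Theorem~\ref{t7}(a)(c)), and for (b) you argue via the dichotomy ``$J$ comparable to $M$'' and then split into the two cases $M \subseteq J$ and $J \subseteq M$.

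The difference is one of packaging rather than strategy. The paper simply invokes the classical $D+M$ structure theory (\cite[Theorem~A, p.~560]{GQ}, \cite[Exercise~11, p.~202]{G}) to assert that every ideal of $R$ is comparable to $M$ and that the ideals (hence prime ideals, hence radical ideals) of $R$ contained in $M$ coincide with those of $T$; then Theorem~\ref{t1} and \cite[Theorem~17.1(2)]{G} finish the argument. You instead prove these structural facts by hand: the comparability via the unit computation $x^2(m/d^2) = m(1+m_0/d)^2$, and the promotion of a nonzero radical $R$-ideal $J \subseteq M$ to a $T$-ideal via the squaring trick $(kj)^2 = (k^2 j)j \in J$. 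Both arguments are sound; yours is self-contained and makes the role of radicality in the $T$-ideal step transparent, while the paper's is shorter but leans on the cited references. One small note: your comparability step can be streamlined by observing that $x = d(1 + m_0/d) \in U(T)$, so $m = x \cdot (m/x)$ with $m/x \in M \subseteq R$, giving $m \in J$ directly.
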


\begin{proof}
(a) This follows directly from Theorem~\ref{t7}(a)(c).

(b) Let $J$ be an ideal of $R$; so $J$ is comparable to $M$. If $M \subseteq J$, then $J = I + M$ for an ideal $I$ of $D$. Thus $J$ is an sdf-absorbing ideal of $R$ if and only if $I$ is an sdf-absorbing ideal of $D$ by part (a) above. So we may assume that $J \subseteq M$. Note that the prime ideals of $R$ contained in $M$ are just the prime ideals of $T$. Hence the radical ideals of $R$ contained in $M$ are precisely the prime ideals of $T$ (since radical ideals in a valuation domain are prime). Thus $J$ is an sdf-absorbing ideal of $R$ if and only if $J$ is a prime ideal of $T$.
\end{proof}

\begin{exa}
{\rm Let $T = \mathbb{Q}[[X]] = \mathbb{Q} + X\mathbb{Q}[[X]]$, $R = \mathbb{Z} + X\mathbb{Q}[[X]]$, and $p \in \mathbb{Z}$ a positive prime. Then $T$ is a valuation domain (DVR) with maximal ideal $M = X\mathbb{Q}[[X]]$. Thus the sdf-absorbing ideals of $R$ are the prime ideals $\{0\}$, $X\mathbb{Q}[[X]]$, and $p\mathbb{Z} + X\mathbb{Q}[[X]]$, and the ideals $2p\mathbb{Z} + X\mathbb{Q}[[X]]$ ($p \neq 2$) by Theorem~\ref{t763}(b) and Example~\ref{e1}(a). Note that $R$ is a B\'{e}zout domain by \cite[Theorem 7]{BR}, but not a PID since $M$ is not a principal (or even finitely generated) ideal of $R$.}
\end{exa}

\section{Weakly square-difference factor absorbing ideals} \label{s5}

Recall from \cite{DS} (also see \cite{bb}) that a proper ideal $I$ of a commutative ring $R$ is a {\it weakly prime ideal} of $R$ if whenever $0\not = ab \in I$ for  $a, b \in R$, then $a\in I$ or $b\in I$. In this section, we introduce and study the ``weakly'' analog of sdf-absorbing ideals. First we give the definition.

\begin{definition}
{\rm A proper ideal $I$ of a commutative ring $R$ is a {\it weakly square-difference factor absorbing ideal} (weakly sdf-absorbing ideal) of $R$ if whenever  $0 \not = a^2 - b^2 \in I$ for $0 \neq a, b \in R$, then $a + b \in I$ or  $a - b \in I$.}
\end{definition}

A weakly prime ideal or sdf-absorbing ideal of $R$ is clearly also a weakly sdf-absorbing ideal of $R$. If $R$ is an integral domain, then $I$ is a weakly prime (resp., weakly sdf-absorbing) ideal of $R$ if and only if it is a prime (resp., sdf-absorbing) ideal of $R$. Also, $\{0\}$ is vacuously a weakly prime and weakly sdf-absorbing ideal of $R$, but need not be a prime or sdf-absorbing ideal of $R$. The following is an example of a nonzero weakly sdf-absorbing ideal that is neither an sdf-absorbing ideal nor a weakly prime ideal.

\begin{exa}\label{ew1}
{\rm Let  $R = \mathbb{Z}_4 \times \mathbb{Z}_4$, and $I = \{0\} \times \{0,2\}$. Then $I$ is not a radical ideal of $R$; so $I$  is not an sdf-absorbing ideal of $R$ by Theorem~\ref{t1}. Also, $(0, 0) \not = (2, 2)(0, 1) \in I$, but  $(2, 2) \not \in I$ and $(0, 1)\not \in I$; so $I$ is not a weakly prime ideal of $R$. Note that if  $x^2 - y^2 \in I$ for $x, y \in R$, then $x^2 - y^2 = (0, 0)$.   Thus $I$ is a weakly sdf-absorbing ideal of $R$.}
\end{exa}

The next theorem is the ``weakly''  version of Theorem~\ref{t3}.

\begin{thm}\label{tw1}
Let $I$ be a weakly sdf-absorbing ideal of a commutative ring $R$ with $2 \in U(R)$. Then $I$ is a weakly prime ideal of $R$.
\end{thm}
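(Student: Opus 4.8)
The plan is to follow the proof of Theorem~\ref{t3}, exploiting the fact that, because $2 \in U(R)$, the assignment $x = a + b$, $y = a - b$ is a bijection of $R \times R$ onto itself with $xy = a^2 - b^2$. Thus it suffices to show that if $0 \neq xy \in I$ for $x, y \in R$, then $x \in I$ or $y \in I$ (note that $xy \neq 0$ forces $x, y \neq 0$). First I would handle the case $x \neq y$ and $x \neq -y$ verbatim as in Theorem~\ref{t3}: put $a = (x + y)/2$ and $b = (x - y)/2$; then $a, b \neq 0$, and $0 \neq a^2 - b^2 = xy \in I$, so $x = a + b \in I$ or $y = a - b \in I$ because $I$ is weakly sdf-absorbing.

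The remaining case is $x = y$ or $x = -y$; in either case $0 \neq x^2 \in I$, and I claim this already forces $x \in I$. (Observe $x \notin U(R)$, since otherwise $1 \in I$.) If $x^2 = -x$ or $2x^2 = -x$, then, as $x^2 \in I$ and hence $2x^2 \in I$, we get $-x \in I$, so $x \in I$. Otherwise, consider the two elements $j = x^2$ and $j = 2x^2$ of $I$; each is nonzero (using $x^2 \neq 0$ and $2 \in U(R)$) and different from $-x$. At least one of them satisfies $x^2 + 2xj \neq 0$: if both $x^2 + 2x^3 = 0$ and $x^2 + 4x^3 = 0$ held, then subtracting would give $2x^3 = 0$, whence $x^2 = -2x^3 = 0$, contradicting $x^2 \neq 0$. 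Fix such a $j$ and apply the weakly sdf-absorbing hypothesis to $a = x + j$ and $b = j$: these are both nonzero, and $a^2 - b^2 = (x + j)^2 - j^2 = x^2 + 2xj$ is a nonzero element of $I$ (as $x^2 \in I$ and $xj \in I$). Hence $x + 2j = a + b \in I$ or $x = a - b \in I$; since $2j \in I$, in both cases $x \in I$.

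I expect the last case to be the main obstacle: when $x = \pm y$ the half-sum/half-difference substitution degenerates (one of $a, b$ becomes $0$), and, unlike in Theorem~\ref{t3}, we cannot appeal to Theorem~\ref{t1} since a weakly sdf-absorbing ideal need not be radical. The crucial device is the alternate substitution $(a, b) = (x + j, j)$ with $j$ taken from $I$ itself, so that $a - b = x$ and $(a + b) - x = 2j \in I$; choosing $j$ among $\{x^2, 2x^2\}$ is exactly what guarantees $a^2 - b^2 = x^2 + 2xj \neq 0$, which the ``weakly'' hypothesis needs in order to apply.
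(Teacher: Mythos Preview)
Your proof is correct and follows the same overall plan the paper indicates (``similar to the proof of Theorem~\ref{t3}''): the paper gives no details at all, so there is nothing to compare beyond the shared skeleton of using $a=(x+y)/2$, $b=(x-y)/2$ when $y\neq\pm x$. You rightly observe that the degenerate case $y=\pm x$ is the only place where the weakly hypothesis creates extra work---one can no longer invoke Theorem~\ref{t1} as in the proof of Theorem~\ref{t3}---and your device of taking $(a,b)=(x+j,j)$ with $j\in\{x^2,2x^2\}$ chosen so that $a^2-b^2=x^2+2xj\neq 0$ handles it cleanly. (The parenthetical ``$x\notin U(R)$'' is true but unused; you may drop it.)
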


\begin{proof}
The proof is similar to the proof of  Theorem~\ref{t3}. The details are left to the reader.
\end{proof}

The next two theorems and corollary are the ``weakly'' analogs of Theorem ~\ref{t349}, Theorem~\ref{t7}(b)(c), and Corollary~\ref{c2}(a)(b), respectively. They follow directly from the definitions; so their proofs are omitted.

\begin{thm} \label{te50}
Let $I$ be a weakly sdf-absorbing ideal of a commutative ring $R$, and let $S$ be a multiplicatively closed subset of $R$ with $I \cap S = \emptyset$. Then $I_S$ is a weakly sdf-absorbing ideal of $R_S$.
\end{thm}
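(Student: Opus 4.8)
The plan is to run the standard localization argument, paying attention to the extra nonvanishing hypotheses that the ``weakly'' restriction forces. First observe that $I_S$ is a proper ideal of $R_S$, since $I \cap S = \emptyset$. Now suppose $0 \neq \alpha^2 - \beta^2 \in I_S$ for some $0 \neq \alpha, \beta \in R_S$, and write $\alpha = a/s$, $\beta = b/t$ with $a, b \in R$ and $s, t \in S$. Since $\alpha^2 - \beta^2 = (a^2t^2 - b^2s^2)/s^2t^2$ lies in $I_S$, there is $u \in S$ with $u(a^2t^2 - b^2s^2) \in I$, and multiplying once more by $u$ gives $(uat)^2 - (ubs)^2 = u^2(a^2t^2 - b^2s^2) \in I$. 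So I have produced candidate elements $uat$ and $ubs$ of $R$ whose square-difference lies in $I$.

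Next I would check the three nonvanishing conditions needed to apply the weakly sdf-absorbing property of $I$ to the pair $uat$, $ubs$. Because $\alpha = a/s \neq 0$ in $R_S$, no element of $S$ kills $a$; since $ut \in S$, this yields $uat \neq 0$, and symmetrically $ubs \neq 0$. Because $\alpha^2 - \beta^2 \neq 0$ in $R_S$, no element of $S$ kills $a^2t^2 - b^2s^2$; since $u^2 \in S$, this yields $(uat)^2 - (ubs)^2 = u^2(a^2t^2 - b^2s^2) \neq 0$. Hence $0 \neq (uat)^2 - (ubs)^2 \in I$ with $uat, ubs$ both nonzero, so by hypothesis $uat + ubs \in I$ or $uat - ubs \in I$.

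Finally I would divide back by $uts \in S$: if $uat + ubs \in I$, then $\alpha + \beta = a/s + b/t = (uat + ubs)/uts \in I_S$; if instead $uat - ubs \in I$, then $\alpha - \beta = (uat - ubs)/uts \in I_S$. In either case $\alpha + \beta \in I_S$ or $\alpha - \beta \in I_S$, which shows $I_S$ is a weakly sdf-absorbing ideal of $R_S$.

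The only real subtlety — and the reason this statement, unlike its non-weak analog Theorem~\ref{t349}, is not literally immediate from the definitions — is the bookkeeping in the middle step: after clearing denominators one must confirm that the resulting elements $uat$, $ubs$, and $(uat)^2 - (ubs)^2$ are genuinely nonzero in $R$, which is exactly where the hypotheses $\alpha \neq 0$, $\beta \neq 0$, and $\alpha^2 - \beta^2 \neq 0$ in $R_S$ are consumed, together with the closure of $S$ under multiplication so that $ut$, $us$, and $u^2$ lie in $S$.
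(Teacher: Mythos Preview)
Your argument is correct and is exactly the direct verification from the definitions that the paper has in mind; indeed, the paper omits the proof entirely, stating only that it ``follows directly from the definitions.'' Your careful tracking of the three nonvanishing conditions is the right way to make this precise, and there is nothing to add.
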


\begin{thm} \label{te70}
Let $f : R \longrightarrow T$ be a homomorphism of commutative rings.

{\rm (a)} If $f$ is injective and $J$ is a weakly sdf-absorbing ideal of $T$, then $f^{-1}(J)$ is a weakly sdf-absorbing ideal of $R$.

{\rm (b)} If $f$  is surjective and $I$ is a weakly sdf-absorbing ideal of $R$ containing $ker(f)$, then  $f(I)$ is a weakly sdf-absorbing ideal of $T$.
\end{thm}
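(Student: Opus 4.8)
The plan is to verify both parts directly from the definition of weakly sdf-absorbing ideal, mirroring the proof of Theorem~\ref{t7}(b)(c), and to pay attention to where the extra hypotheses enter: injectivity of $f$ in (a), and $\ker(f)\subseteq I$ in (b).

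First I would record the preliminary facts. A preimage of an ideal is an ideal and a surjective image of an ideal is an ideal, and both ideals in question are proper: $1_R\notin f^{-1}(J)$ since $f(1_R)=1_T\notin J$; and $1_T\notin f(I)$, for if $f(x)=1_T$ with $x\in I$ then $x-1_R\in\ker(f)\subseteq I$, forcing $1_R\in I$, a contradiction.

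For (a), take $0\neq a^2-b^2\in f^{-1}(J)$ with $0\neq a,b\in R$ and push forward by $f$. Injectivity guarantees $f(a^2-b^2)\neq 0$ and $f(a),f(b)\neq 0$, so $0\neq f(a)^2-f(b)^2\in J$ with $f(a),f(b)\neq 0$; the weakly sdf-absorbing property of $J$ then gives $f(a)+f(b)\in J$ or $f(a)-f(b)\in J$, i.e.\ $a+b\in f^{-1}(J)$ or $a-b\in f^{-1}(J)$. For (b), take $0\neq s^2-t^2\in f(I)$ with $0\neq s,t\in T$, lift $s=f(a)$ and $t=f(b)$ by surjectivity, and note that $a,b\neq 0$ and $a^2-b^2\neq 0$ follow from the corresponding facts about $s,t$. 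Writing $s^2-t^2=f(c)$ with $c\in I$, we get $a^2-b^2-c\in\ker(f)\subseteq I$, hence $a^2-b^2\in I$; now the weakly sdf-absorbing property of $I$ gives $a+b\in I$ or $a-b\in I$, and applying $f$ finishes.

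I do not expect a genuine obstacle here; the entire content is careful bookkeeping of the three nonzero conditions $a\neq 0$, $b\neq 0$, $a^2-b^2\neq 0$ built into the weak definition. That bookkeeping is precisely what forces the hypotheses: the clause $0\neq a^2-b^2$ is intrinsic to the weak definition and cannot be discarded, so ``$J$ nonzero'' does not suffice in (a) the way it does in Theorem~\ref{t7}(a), and $\ker(f)\subseteq I$ is indispensable in (b), as Example~\ref{e2}(b) already shows in the non-weak setting.
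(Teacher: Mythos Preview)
Your proof is correct and is exactly the direct-from-the-definitions verification the paper has in mind; the paper simply omits the proof, stating that the result follows directly from the definitions. Your bookkeeping of the three nonzero conditions and your explanation of why injectivity (rather than merely $J\neq\{0\}$) is needed in (a) are precisely the points that distinguish the weak case from Theorem~\ref{t7}.
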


\begin{cor} \label{ce50}
{\rm (a)} Let $R \subseteq T$ be an extension of commutative rings and $J$ a weakly sdf-absorbing ideal of $T$. Then $J \cap R$ is a weakly sdf-absorbing ideal of $R$.

{\rm (b)}  Let $ J \subseteq I$ be ideals of a commutative ring R. If $I$ is a weakly sdf-absorbing ideal of $R$, then $I/J$ is a weakly sdf-absorbing ideal of $R/J$.

\end{cor}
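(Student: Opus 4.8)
The plan is to mimic, almost verbatim, the proofs of the non-weakly analogs in Corollary~\ref{c2}(a)(b), substituting the homomorphism statements of Theorem~\ref{te70} for those of Theorem~\ref{t7}. I would first record that both asserted ideals are automatically proper: $J \cap R = R$ would force $1 \in J$ and hence $J = T$, while $I/J = R/J$ would force $I = R$. After that, in each part it only remains to verify the defining implication for weakly sdf-absorbing ideals.

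For part (a), I would take $\iota \colon R \hookrightarrow T$ to be the inclusion map, an injective ring homomorphism with $\iota^{-1}(J) = J \cap R$, and quote Theorem~\ref{te70}(a). If one prefers to avoid the citation, the direct argument is immediate: given $0 \neq a^2 - b^2 \in J \cap R$ with $0 \neq a, b \in R$, regard $a, b$ as elements of $T$; then $0 \neq a^2 - b^2 \in J$, so $a + b \in J$ or $a - b \in J$ since $J$ is weakly sdf-absorbing in $T$, and since $a \pm b \in R$ this places one of them in $J \cap R$.

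For part (b), I would take $\pi \colon R \twoheadrightarrow R/J$ to be the canonical surjection; since $\ker(\pi) = J \subseteq I$, Theorem~\ref{te70}(b) gives that $\pi(I) = I/J$ is weakly sdf-absorbing in $R/J$. Spelled out directly: suppose $0 \neq (a + J)^2 - (b + J)^2 \in I/J$ with $a + J$ and $b + J$ nonzero in $R/J$. Lifting to $R$, this says $a^2 - b^2 \in I \setminus J$ and $a, b \notin J$; since $0 \in J$, we get $a^2 - b^2 \neq 0$ and $a, b \neq 0$, so $0 \neq a^2 - b^2 \in I$ for $0 \neq a, b \in R$, whence $a + b \in I$ or $a - b \in I$ as $I$ is weakly sdf-absorbing. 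Reducing modulo $J$ finishes the argument.

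I do not expect a genuine obstacle; the content is purely formal. The one point deserving care is the ``$0 \neq$'' bookkeeping, which works out precisely because $0$ lies in every ideal: ``$x \notin J$'' forces $x \neq 0$, and in part (b) the hypothesis ``$a^2 - b^2 \notin J$'' handed to us automatically supplies the ``$a^2 - b^2 \neq 0$'' needed to invoke the definition in $R$. By contrast with Remark~\ref{e0}(b), for the weakly notion the ``$a, b \neq 0$'' restriction genuinely matters, so it must be tracked throughout both arguments.
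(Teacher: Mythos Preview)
Your proposal is correct and matches the paper's approach: the paper omits the proof entirely, stating that Corollary~\ref{ce50} (together with the two theorems preceding it) ``follow directly from the definitions.'' Your reduction to Theorem~\ref{te70}(a)(b) via the inclusion and quotient maps, and your careful bookkeeping of the ``$0 \neq$'' conditions, are exactly the routine verification the paper has in mind.
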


The following examples (cf. Example~\ref{e2}) show that the ``weakly'' analog of Theorem~\ref{t7}(a) and Corollary~\ref{c2}(c) may fail and the ``$ker(f) \subseteq I$'' hypothesis is needed in Theorem~\ref{te70}(b).
	
\begin{exa}\label{e200}
{\rm (a) Let $f: \mathbb{Z} \times \mathbb{Z} \longrightarrow \mathbb{Z}/4\mathbb{Z} \times \mathbb{Z}/4\mathbb{Z} = \mathbb{Z}_4 \times \mathbb{Z}_4$ be the natural epimorphism. By Example~\ref{ew1},  $\{0\} \times \{0,2\}$ is a weakly sdf-absorbing ideal of $\mathbb{Z}_4 \times \mathbb{Z}_4$, but $f^{-1}(\{0\} \times \{0,2\}) =  4\mathbb{Z} \times 2\mathbb{Z}$ is not a weakly sdf-absorbing ideal of $\mathbb{Z} \times \mathbb{Z}$ (let $a = (2,2)$ and $b = (0,2)$). Thus the ``weakly'' analog of Theorem~\ref{t7}(a) and Corollary~\ref{c2}(c) may fail.

(b)  Let $f : \mathbb{Z}[X] \longrightarrow \mathbb{Z}$ be the epimorphism given by $f(g(X)) = g(0)$. Then $I = (X + 4)$ is a prime ideal, and thus a weakly sdf-absorbing ideal, of $\mathbb{Z}[X]$, but  $f((X+4)) = 4\mathbb{Z}$ is not a weakly sdf-absorbing ideal of $\mathbb{Z}$ (let $a = 4$ and $b = 2$). Note that $ker(f) = (X) \not \subseteq (X + 4) = I$; so the ``$ker(f) \subseteq I$'' hypothesis is needed in Theorem~\ref{te70}(b).}
\end{exa}

If $I$ is a weakly prime ideal of a commutative ring $R$ that is not a prime ideal, then $I \subseteq nil(R)$ by  \cite[Theorem 1]{DS}.  A similar result holds for weakly sdf-absorbing ideals.

\begin{thm}\label{tw2}
Let $I$ be a weakly sdf-absorbing ideal of a commutative ring $R$. If $I$ is not an sdf-absorbing ideal of $R$, then $I \subseteq nil(R)$.
\end{thm}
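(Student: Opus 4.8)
The plan is to adapt the classical argument that a weakly prime ideal which fails to be prime must lie inside the nilradical (compare \cite[Theorem 1]{DS}), transported to the square-difference setting.

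First I would unpack the failure of the sdf-absorbing property. Since $I$ is not an sdf-absorbing ideal, there exist $0 \neq x, y \in R$ with $x^2 - y^2 \in I$ but $x + y \notin I$ and $x - y \notin I$. Because $I$ is weakly sdf-absorbing and $x, y \neq 0$, the element $x^2 - y^2$ cannot be nonzero (otherwise weakly sdf-absorbing would force $x + y \in I$ or $x - y \in I$); hence $x^2 = y^2$. A small but essential normalization comes next: not both $x$ and $y$ lie in $I$, since $x + y \notin I$; and after possibly interchanging $x$ and $y$ — which only negates $x^2 - y^2$ and $x - y$, so all the hypotheses survive — I may assume $x \notin I$.

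Now fix an arbitrary $c \in I$; the goal is to show $c$ is nilpotent. Since $x \notin I$ and $c \in I$, we have $x + c \notin I$ and $x - c \notin I$, and in particular $x + c \neq 0$ and $x - c \neq 0$. Apply the weakly sdf-absorbing property to the pair $(x + c,\, y)$: both entries are nonzero, $(x+c)^2 - y^2 = (x^2 - y^2) + 2xc + c^2 = 2xc + c^2 \in I$ (as $c \in I$), while $(x+c) + y = (x+y) + c \notin I$ and $(x+c) - y = (x-y) + c \notin I$. Hence $(x+c)^2 - y^2$ must vanish, i.e. $c^2 + 2xc = 0$. Running the same argument with $(x - c,\, y)$ gives $c^2 - 2xc = 0$. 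Adding the two relations yields $2c^2 = 0$, and then from $c^2 = -2xc$ we get $c^4 = (c^2)^2 = 4x^2 c^2 = 2x^2\,(2c^2) = 0$. Thus every element of $I$ is nilpotent, i.e. $I \subseteq nil(R)$.

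The one genuine subtlety — and the step I would guard most carefully — is avoiding degenerate instances in which an entry of a pair is $0$, so that the hypothesis ``$0 \neq a, b$'' of the weakly sdf-absorbing definition fails to apply. This is exactly what the normalization $x \notin I$ buys us: it makes $x \pm c \notin I$ automatic, hence $x \pm c \neq 0$, for every $c \in I$. Everything else is the routine arithmetic displayed above.
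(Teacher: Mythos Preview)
Your proof is correct and follows essentially the same route as the paper's: fix a witness $(x,y)$ to the failure of sdf-absorption, normalize so that one of $x,y$ lies outside $I$, then perturb that element by an arbitrary $c\in I$ to force $c^2\pm 2xc=0$ and hence $c$ nilpotent. The only cosmetic difference is that the paper perturbs the second entry rather than the first, and finishes by observing $i^2=2bi\in nil(R)$ via $2i\in nil(R)$, whereas you compute $c^4=0$ directly; both endings are equivalent.
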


\begin{proof}
Since $I$ is not an sdf-absorbing ideal of $R$, we have $a^2 - b^2 = 0$ for some $0 \neq a, b \in R$, but $a + b \not \in I$ and $a - b \not \in I$. Note that if $a, b \in I$, then $a + b \in I$ and $a - b \in I$, a contradiction. So without loss of generality, we may assume that $b \not \in I$. Let $i \in I$. Then $b + i, b - i \neq 0$. We first show that $a^2 - (b + i)^2 = 0$  and $a^2 - (b - i)^2 = 0$.  Since $i \in I$ and $a^2 - b^2 = 0$, we have $a^2 - (b + i)^2 = a^2 -  b^2 - 2bi - i^2 = -2bi -i^2\in I$. Suppose that $a^2 - (b + i)^2 \not = 0$. Since $I$ is a weakly sdf-absorbing ideal of $R$, either $a + (b + i) \in I$ or $a - (b + i) \in I$.  Thus $a + b \in I$ or $a - b \in I$,  a contradiction. Similarly, $a^2 - (b - i)^2 = 0$. Hence $-2bi - i^2 = a^2 -  b^2 - 2bi - i^2 =  a^2 - (b + i)^2= 0$ and  $2bi - i^2 =  a^2 -  b^2  + 2bi - i^2  = a^2 - (b - i)^2 = 0$; so $2i^2 = 0$.  Thus $2i \in nil(R)$. Since $2bi - i^2 = 0$  and $2i \in nil(R)$, we have $i^2 = 2bi \in nil(R)$. Hence $i \in nil(R)$, and thus $I \subseteq nil(R)$.
\end{proof}

In light of the proof of Theorem \ref{tw2}, we have the following result.

\begin{cor}\label{cw2}
Let $I$ be a weakly sdf-absorbing ideal of a commutative ring $R$ that is not an sdf-absorbing ideal of $R$.

{\rm (a)}  $2i^2 = 0$, and hence $2i \in nil(R)$, for every $i \in I$. Moreover, if $2 \not \in Z(R)$ or char$(R) = 2$, then $i^2 = 0$ for every $i \in I$.

{\rm (b)} If $R$ is reduced, then $I = \{0\}$.
\end{cor}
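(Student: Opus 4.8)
The plan is to read everything off the computation already performed in the proof of Theorem~\ref{tw2}, so that essentially no new work is needed. Recall that in that argument one fixes $0 \neq a, b \in R$ with $a^2 = b^2$, $a + b \notin I$, $a - b \notin I$, and, after possibly interchanging $a$ and $b$, $b \notin I$. Since $b \notin I$, for every $i \in I$ one has $b + i \neq 0$ and $b - i \neq 0$, and applying the weakly sdf-absorbing hypothesis to the pairs $(a, b+i)$ and $(a, b-i)$ forces $a^2 - (b+i)^2 = 0$ and $a^2 - (b-i)^2 = 0$. Expanding these and using $a^2 = b^2$, they say precisely that $2bi + i^2 = 0$ and $2bi - i^2 = 0$ for every $i \in I$.

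For part (a), I would subtract the two relations to obtain $2i^2 = 0$ for every $i \in I$, and then observe $(2i)^2 = 4i^2 = 2\cdot 2i^2 = 0$, so $2i \in nil(R)$. For the ``moreover'' clause I would split into cases: if $2 \notin Z(R)$, cancelling the non-zero-divisor $2$ in $2i^2 = 0$ gives $i^2 = 0$; if char$(R) = 2$, then $2bi = 0$, so the relation $2bi + i^2 = 0$ already reads $i^2 = 0$. Part (b) is then immediate from Theorem~\ref{tw2} itself, whose hypotheses are exactly those of the corollary: it gives $I \subseteq nil(R)$, and $nil(R) = \{0\}$ when $R$ is reduced, whence $I = \{0\}$. (Alternatively one can use part (a): $2i \in nil(R) = \{0\}$ gives $2i = 0$, hence $i^2 = 2bi = 0$, and then $i = 0$ since $R$ is reduced.)

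I do not expect a genuine obstacle here. The only point requiring a little care is to note that the relations $2bi \pm i^2 = 0$ hold for \emph{every} $i \in I$ (with $b$ fixed once and for all), not merely for a single $i$; but this is exactly how the proof of Theorem~\ref{tw2} is organized, since there $i \in I$ is arbitrary while $a$ and $b$ are chosen beforehand. Everything else is a one-line manipulation, so the write-up should be short.
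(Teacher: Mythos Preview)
Your proposal is correct and is exactly what the paper intends: it states the corollary ``in light of the proof of Theorem~\ref{tw2}'' with no further argument, and you have simply made explicit the consequences of the relations $2bi \pm i^2 = 0$ and $2i^2 = 0$ already derived there. The case split in your ``moreover'' clause and the appeal to $I \subseteq nil(R)$ for part~(b) are precisely the intended one-line deductions.
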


We next investigate when $I \times J$ is a weakly sdf-absorbing ideal of $R_1 \times R_2$.

\begin{thm}
Let $R_1, R_2$ be commutative rings and $I$ a nonzero weakly sdf-absorbing ideal of $R_1$. Then the following statements are equivalent.
	
{\rm (a)} $ I \times R_2$ is a weakly sdf-absorbing ideal of $R_1 \times R_2$.

{\rm (b)} $I$ is an sdf-absorbing ideal of $R_1$.

{\rm (c)} $I \times R_2$ is an sdf-absorbing ideal of $R_1 \times R_2$.
\end{thm}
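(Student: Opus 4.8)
The plan is to establish the cycle of implications $(c) \Rightarrow (a) \Rightarrow (b) \Rightarrow (c)$. The implication $(c) \Rightarrow (a)$ is immediate, since an sdf-absorbing ideal is always a weakly sdf-absorbing ideal. The implication $(b) \Rightarrow (c)$ is exactly Theorem~\ref{t600.5}(b) applied to $I$, which is a nonzero proper ideal of $R_1$ (weakly sdf-absorbing ideals are proper by definition). So the only real work is $(a) \Rightarrow (b)$.

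I would prove $(a) \Rightarrow (b)$ by contraposition. Assume $I$ is not an sdf-absorbing ideal of $R_1$, and choose $0 \neq a, b \in R_1$ with $a^2 - b^2 \in I$ but $a + b \notin I$ and $a - b \notin I$. Because $I$ is a weakly sdf-absorbing ideal of $R_1$, such a choice is possible only when $a^2 - b^2 = 0$. Passing to $R = R_1 \times R_2$, put $A = (a, 0)$ and $B = (b, 1)$; then $A \neq (0,0)$ since $a \neq 0$, and $B \neq (0,0)$ since $1 \neq 0$ in $R_2$, while $A^2 - B^2 = (a^2 - b^2, -1) = (0, -1)$ is a nonzero element of $I \times R_2$. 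Yet $A + B = (a + b, 1) \notin I \times R_2$ because $a + b \notin I$, and $A - B = (a - b, -1) \notin I \times R_2$ because $a - b \notin I$. Hence $I \times R_2$ is not a weakly sdf-absorbing ideal of $R$, so $(a)$ fails.

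I do not expect a genuine obstacle; the construction is essentially forced. The one point that makes it work is that, once $(a)$ holds, $I$ itself is weakly sdf-absorbing (pad witnesses with zeros in the second coordinate), so any witness to $I$ failing to be sdf-absorbing must satisfy $a^2 = b^2$; appending the entry $1$ in the second coordinate of $B$ (and $0$ in that of $A$) then creates a nonzero value $(0,-1)$ of $A^2 - B^2$ lying inside $I \times R_2$ without disturbing the first coordinates, and hence without disturbing whether $A \pm B$ lie in $I \times R_2$.
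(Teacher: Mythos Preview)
Your argument is correct and follows essentially the same route as the paper: both prove $(a)\Rightarrow(b)$ by taking a witness $0\neq a,b\in R_1$ with $a^2-b^2=0$ but $a\pm b\notin I$ (using that $I$ is weakly sdf-absorbing) and then appending a $1$ in the second coordinate of one of the two elements to force $A^2-B^2\neq(0,0)$ in $I\times R_2$; the paper uses $(a,1),(b,0)$ while you use $(a,0),(b,1)$, which is an immaterial difference. Your citation of Theorem~\ref{t600.5}(b) for $(b)\Rightarrow(c)$ is exactly the right result.
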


\begin{proof}
	$(a) \Rightarrow (b)$ Let $R = R_1 \times R_2$ and $J = I \times R_2$. Assume by way of contradiction that $I$ is not an sdf-absorbing ideal of $R_1$. Since $I$ is a weakly sdf-absorbing ideal of $R_1$, there are $0 \neq a, b \in R_1$ such that $a^2 - b^2 = 0$, but $a + b \not \in I$ and $a - b \not \in I$. Let $x = (a, 1), y = (b, 0) \in R$. Then $0 \neq x, y \in R$ and $(0, 0) \not = x^2 - y^2 \in J$. Since $J$ is a weakly sdf-absorbing ideal of $R$, we have   $x + y = (a + b, 1) \in J$ or $x - y = (a - b, 1) \in J$. Thus $a + b \in I$ or $a - b \in I$, a contradiction. Hence $I$ is an sdf-absorbing ideal of $R_1$.
	
	$(b) \Rightarrow (c)$ This follows from Remark~\ref{r815}(b).
	
	$(c) \Rightarrow (a)$ This is clear.
	\end{proof}

\begin{thm} \label{weakly300}
Let $R_1, R_2$ be commutative rings, $I$ a weakly sdf-absorbing ideal of $R_1$ that is not an sdf-absorbing ideal, and $J$  a weakly sdf-absorbing ideal of $R_2$ that is not an sdf-absorbing ideal. Then the following statements are equivalent
	
{\rm (a)} $I \times J$ is a weakly sdf-absorbing ideal of $R_1 \times R_2$ that is not an sdf-absorbing ideal.
		
{\rm (b)} $I \times J$ is a weakly sdf-absorbing ideal of $R_1 \times R_2$.

{\rm (c)} If $a^2 - b^2 \in I$ for $a, b \in R_1$, then $a^2 - b^2 = 0$, and if $c^2 - d^2 \in J$ for  $c, d \in R_2$, then $c^2 - d^2 = 0$.

{\rm (d)} If $x^2 - y^2 \in I \times J$ for $0 \neq x, y \in R_1 \times R_2$, then $x^2 - y^2 = (0, 0)$.
			
\end{thm}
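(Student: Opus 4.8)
The plan is to establish the cycle $(a)\Rightarrow(b)\Rightarrow(c)\Rightarrow(d)\Rightarrow(a)$, with the standing hypothesis supplying the engine for every nontrivial step. Since $I$ is a weakly sdf-absorbing but not sdf-absorbing ideal of $R_1$, there exist $0\neq a_0,b_0\in R_1$ with $a_0^2-b_0^2=0$ yet $a_0+b_0\notin I$ and $a_0-b_0\notin I$; symmetrically there are $0\neq c_0,d_0\in R_2$ with $c_0^2-d_0^2=0$ yet $c_0+d_0\notin J$ and $c_0-d_0\notin J$. I would fix these ``witness'' elements at the outset, since each direction manufactures the needed elements of $R_1\times R_2$ from them.

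The implication $(a)\Rightarrow(b)$ is immediate. For $(b)\Rightarrow(c)$ I would argue contrapositively: if $a^2-b^2\in I$ with $a^2-b^2\neq0$ for some $a,b\in R_1$ (with no assumption that $a,b$ be nonzero), put $x=(a,c_0)$ and $y=(b,d_0)$. Then $x,y\neq0$ because their second coordinates $c_0,d_0$ are nonzero, and $(0,0)\neq x^2-y^2=(a^2-b^2,\,0)\in I\times J$. Weak sdf-absorption of $I\times J$ then forces $x+y\in I\times J$ or $x-y\in I\times J$, whose second coordinate yields $c_0+d_0\in J$ or $c_0-d_0\in J$, contradicting the choice of $c_0,d_0$. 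Interchanging the two factors (pairing an offending $R_2$-pair with $a_0,b_0$) disposes of the $J$-clause of $(c)$. The step $(c)\Rightarrow(d)$ is then a coordinatewise reading: for $0\neq x=(a,c),\,y=(b,d)$ with $x^2-y^2\in I\times J$ we get $a^2-b^2\in I$ and $c^2-d^2\in J$, so both vanish by $(c)$, whence $x^2-y^2=(0,0)$.

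For $(d)\Rightarrow(a)$, observe that $(d)$ renders the defining condition of ``weakly sdf-absorbing'' vacuously true for the proper ideal $I\times J$: there is no pair $0\neq x,y$ with $0\neq x^2-y^2\in I\times J$, so the required conclusion holds by default. To see $I\times J$ is not sdf-absorbing, take $x=(a_0,c_0)$ and $y=(b_0,d_0)$: then $x,y\neq0$ and $x^2-y^2=(0,0)\in I\times J$, while $x+y=(a_0+b_0,\,c_0+d_0)\notin I\times J$ and $x-y=(a_0-b_0,\,c_0-d_0)\notin I\times J$. (This last computation uses only that $I$ is not sdf-absorbing and $J$ is proper, so the ``not an sdf-absorbing ideal'' clause of $(a)$ is automatic and in fact $(a)\Leftrightarrow(b)$.)

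I do not expect a genuine obstacle; the theorem is a careful unwinding of the definitions. The one point that needs attention is the handling of the ``$0\neq$'' hypotheses: in $(b)\Rightarrow(c)$ one must allow $a$ or $b$ to be zero, and it is precisely the nonzero witnesses from the failure of the sdf-absorbing property in the \emph{other} factor that keep $x,y$ and $x^2-y^2$ from being zero. If desired, Theorem~\ref{tw2} and Corollary~\ref{cw2} could be invoked to note $I\subseteq nil(R_1)$ and $J\subseteq nil(R_2)$, but they are not needed for the argument above.
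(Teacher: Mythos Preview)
Your proof is correct and follows essentially the same cycle $(a)\Rightarrow(b)\Rightarrow(c)\Rightarrow(d)\Rightarrow(a)$ as the paper, with the same key idea of pairing a putative offending element in one factor with the ``failure witnesses'' from the other factor in $(b)\Rightarrow(c)$. The only cosmetic difference is in $(d)\Rightarrow(a)$: the paper uses $x=(a,0)$, $y=(b,0)$ with witnesses from $R_1$ alone, whereas you use witnesses from both factors; both choices work, and your closing remark that only one factor's failure is needed is exactly the observation the paper makes after the proof.
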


\begin{proof}  Let $R  = R_1 \times R_2$ and $K = I \times J$.

$(a)\Rightarrow (b)$ This is clear.

$(b) \Rightarrow (c)$ Assume $ 0 \neq  a^2 - b^2 \in I$ for $a, b \in R_1$. Since $J$ is a weakly sdf-absorbing ideal of $R_2$ that is not an sdf-absorbing ideal, we have $e^2 - f^2 = 0$ for some $0 \neq e, f \in R_2$, but $e + f \not \in J$ and $e - f \not \in J$. Let $x = (a, e)$ and $y = (b, f)$. Then $0 \neq x, y \in R$ and $0 \neq x^2 - y^2 = (a^2 - b^2, e^2 - f^2) \in K$. Since $K$ is a weakly sdf-absorbing ideal of $R$, we have $x + y = (a + b, e + f) \in K$ or $x - y = (a - b, e - f) \in K$. Thus $e + f \in J$ or $e - f \in J$,  a contradiction. Hence if $a^2 - b^2 \in I$ for  $a, b \in R_1$, then $a^2 - b^2 = 0$. A similar argument shows that if $c^2 - d^2 \in J$ for $c, d \in R_2$, then $c^2 - d^2 = 0$.
	
$(c)\Rightarrow (d)$ This is clear.
	
$(d) \Rightarrow (a)$ Clearly $K$ is a weakly sdf-absorbing ideal of $R$. We show that $K$ is not a $1$-absorbing ideal of $R$. Since $I$ is a weakly sdf-absorbing ideal of $R_1$ that is not an sdf-absorbing ideal, we have $a^2 - b^2 = 0$ for some $0 \neq a,b \in R_1$, but $a + b \not \in I$ and $a -b \not \in I$. Let $x = (a,0), y = (0,b)$. Then $0 \neq x,y \in R$ and $x^2 - y^2 = (0,0) \in K$, but $x + y = (a + b,0) \not \in K$ and $x - y = (a - b,0) \not \in K$.  Hence $K$ is not an sdf-absorbing ideal of $R$.
\end{proof}

Note that in the proof of $(d) \Rightarrow (a)$ of Theorem~\ref{weakly300} above, we only need one of $I, J$ to not be an sdf-absorbing ideal. In view of Theorem~\ref{weakly300}, the following example shows that $I\times J$ may be a weakly sdf-absorbing ideal of $R_1 \times R_2$ that is not an sdf-absorbing ideal, but neither $I$ nor $J$ need be a weakly sdf-absorbing ideal that is not an sdf-absorbing ideal.

\begin{exa} \label{ew2}
{\rm Let $R_1 = R_2 = \mathbb{Z}_4$, $R = R_1 \times R_2$, and $K = \{0\} \times \{0, 2\}$. Then $K$ is a nonzero weakly sdf-absorbing ideal of $R$ that is not an sdf-absorbing ideal by Example \ref{ew1}. However, $I = \{0\}$ is an sdf-absorbing ideal of $R_1$ and $J = \{0, 2\}$ is an sdf-absorbing ideal of $R_2$.}
\end{exa}

The following example satisfies the hypothesis of Theorem~\ref{weakly300}.

\begin{exa}\label{ew3}
{\rm	Let $R_1 = \mathbb{Z}_2[X]/(X^2)$, $R_2 = \mathbb{Z}_4 \times \mathbb{Z}_4$, and $R = R_1\times R_2$. Then $I = \{0\}$ is a weakly sdf-absorbing ideal of $R_1$. Since $(X+1)^2 - 1^2 = 0$ in $R_1$, but  $X \not \in I$, we have that $I$ is not an sdf-absorbing ideal of $R_1$. Let $J= \{0\} \times \{0, 2\}$. Then $J$ is a weakly sdf-absorbing ideal of $R_2$ that is not an sdf-absorbing ideal by Example~\ref{ew1}. Since $x^2 - y^2 \in K = I\times J$ for  $0 \neq x, y \in R$ implies $x^2 - y^2 = (0, 0, 0) \in R$, we have  that $K = I \times J$ is a weakly sdf-absorbing ideal of $R$ that is not an sdf-absorbing ideal by Theorem~\ref{weakly300}.}
\end{exa}


 In Theorem~\ref{t9}, we determined when $\{0\}$ is an sdf-absorbing ideal of $\mathbb{Z}_n$. We next consider $nil(\mathbb{Z}_n)$ ($= J(\mathbb{Z}_n)$).

\begin{thm}
{\rm (a)} $nil(\mathbb{Z}_n)$ is an sdf-absorbing ideal of $\mathbb{Z}_n$  if and only if $n = q^m$ for some integer $m \geq 1$ and  positive prime $q$ or $n = 2^ip^k$ for some integers $i, k \geq 1$ and positive prime $p \neq  2$.

{\rm (b)} $nil(\mathbb{Z}_n) =\{0\}$ is a weakly sdf-absorbing ideal of $\mathbb{Z}_n$ that is not an sdf-absorbing ideal of $\mathbb{Z}_n$ if and only if $n = pq$ for distinct odd  positive primes $p, q$ or  $n = p_1 \cdots p_m$ for distinct positive primes $p_1, \ldots, p_m$, where $m \geq 3$.
\end{thm}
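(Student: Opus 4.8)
The plan is to deduce both parts from results already established: part~(a) from Theorem~\ref{t8}, and part~(b) from Theorem~\ref{t9} together with the fact, noted in Section~\ref{s5}, that $\{0\}$ is always vacuously a weakly sdf-absorbing ideal.

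For part~(a), write $n = p_1^{a_1}\cdots p_k^{a_k}$ with $p_1,\dots,p_k$ the distinct prime divisors of $n$. Since $\mathbb{Z}_n$ is a finite (hence zero-dimensional) ring, its prime ideals are exactly the maximal ideals $M_1,\dots,M_k$, where $M_i = p_i\mathbb{Z}_n$, $\mathbb{Z}_n/M_i \cong \mathbb{F}_{p_i}$, and $nil(\mathbb{Z}_n) = M_1\cap\cdots\cap M_k$; moreover distinct maximal ideals are comaximal. Thus Theorem~\ref{t8} applies verbatim and gives: $nil(\mathbb{Z}_n)$ is an sdf-absorbing ideal of $\mathbb{Z}_n$ if and only if at most one $M_i$ has char$(\mathbb{Z}_n/M_i) = p_i \neq 2$, i.e., if and only if $n$ has at most one odd prime divisor. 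The remaining (purely arithmetic) step is to check that ``$n$ has at most one odd prime divisor'' describes exactly the families in the statement: no odd prime divisor forces $n$ to be a power of $2$, hence of the form $q^m$; exactly one odd prime divisor $p$ forces $n = 2^i p^k$ with $k\ge 1$ and $i\ge 0$, which is the prime power $p^k = q^m$ when $i = 0$ and of the form $2^i p^k$ with $i,k\ge 1$ otherwise. Conversely each listed $n$ plainly has at most one odd prime divisor.

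For part~(b), first observe that $nil(\mathbb{Z}_n) = \{0\}$ precisely when $n$ is squarefree, say $n = p_1\cdots p_m$ with distinct primes $p_i$. Since $\{0\}$ is vacuously a weakly sdf-absorbing ideal of any ring, the condition in the statement is equivalent to: $n$ is squarefree and $\{0\}$ is not an sdf-absorbing ideal of $\mathbb{Z}_n$. By Theorem~\ref{t9}, $\{0\}$ is an sdf-absorbing ideal of $\mathbb{Z}_n$ exactly when $n\in\{4,9\}$, $n$ is prime, or $n = 2q$ for an odd prime $q$; as $4$ and $9$ are not squarefree, the desired condition reduces to: $n$ squarefree, $n$ not prime, and $n\neq 2q$ for any odd prime $q$. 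A short case analysis on $m$ finishes it: $m=1$ is excluded; $m=2$ forces $p_1,p_2$ both odd (otherwise $n = 2q$), giving $n = pq$ for distinct odd primes; and $m\ge 3$ automatically fails to be prime or of the form $2q$, giving $n = p_1\cdots p_m$ with $m\ge 3$. The converse is immediate, since both listed families are squarefree and are neither prime nor equal to $2q$.

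There is no genuine obstacle here: the content is entirely contained in the two quoted theorems, and the only care required lies in the bookkeeping in the two translation steps and in handling the degenerate case $n = q$ prime, where $nil(\mathbb{Z}_q) = \{0\}$ is the single zero prime ideal and Theorem~\ref{t8} is invoked with just one prime ideal.
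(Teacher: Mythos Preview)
Your proof is correct and follows essentially the same approach as the paper. The only cosmetic difference is that for part~(a) you invoke Theorem~\ref{t8} (comaximal primes) while the paper cites Theorem~\ref{t2024}; since the maximal ideals of $\mathbb{Z}_n$ are automatically comaximal, both theorems apply and yield the same conclusion. For part~(b) your argument via Theorem~\ref{t9} alone is exactly what the paper does (the paper also mentions Theorem~\ref{t2024}, but Theorem~\ref{t9} already handles the squarefree case completely).
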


\begin{proof} {\rm (a)} This follows from Theorem~\ref{t2024}.

{\rm (b)} Note that $nil(\mathbb{Z}_n) = \{0\}$ if and only if $n$ is a product of distinct positive primes. The result then follows from Theorem~\ref{t2024} and Theorem~\ref{t9}.
\end{proof}

\end{document}